\newcommand{\e}{\mathbb{E}}
\newcommand{\p}{\mathbb{P}}
\newcommand{\h}{{H}}
\newcommand{\ind}{\mathbbm{1}_{A_T}}
\newcommand{\indi}{\mathbbm{1}_{A_T^c}}
\DeclareMathOperator{\vect}{vec}
\DeclareMathOperator{\rank}{rank}
\newcommand{\bh}{{\Theta}}
\newcommand{\Bh}{{B}}
\newcommand{\Ch}{{C}}
\newcommand{\Dh}{{D}}
\newcommand{\gammah}{{\gamma}}
\newtheorem{defi}{Definition}[section]
\newtheorem{lemma}[defi]{Lemma}
\newtheorem{prop}[defi]{Proposition}
\newtheorem{theorem}[defi]{Theorem}
\newtheorem{kor}[defi]{Corollary}
\newtheorem{rem}[defi]{Remark}
\begin{document}

\title{Vector-valued Generalised Ornstein-Uhlenbeck Processes}

\renewcommand{\thefootnote}{\fnsymbol{footnote}}

\author{Marko Voutilainen\footnotemark[1],\, Lauri Viitasaari\footnotemark[2],\, Pauliina Ilmonen\footnotemark[1],\,\\ Soledad Torres\footnotemark[3],\, and \, Ciprian Tudor\footnotemark[4]}

\footnotetext[1]{Department of Mathematics and Systems Analysis, Aalto University School of Science, Finland}

\footnotetext[2]{Department of Information and Service Management, Aalto University School of Business, Finland}

\footnotetext[3]{CIMFAV, Facultad de Ingenier\'ia, Universidad de Valpara\'iso,
 Valparaiso, Chile}

\footnotetext[4]{UFR Math\'ematiques, Universit\'e de Lille 1, France}

\maketitle

\begin{abstract}
\noindent
Generalisations of the Ornstein-Uhlenbeck process defined through Langevin equation $dU_t = - \bh U_t dt + dG_t,$ such as fractional Ornstein-Uhlenbeck processes, have recently received a lot of attention in the literature. In particular, estimation of the unknown parameter $\bh$ is widely studied under Gaussian stationary increment noise $G$. Langevin equation is well-known for its connections to physics. In addition to that, motivation for studying Langevin equation with a general noise $G$ stems from the fact that the equation characterises all univariate stationary processes. Most of the literature on the topic focuses on the one-dimensional case with Gaussian noise $G$. In this article, we consider estimation of the unknown model parameter in the multidimensional version of the Langevin equation, where the parameter $\bh$ is a matrix and $G$ is a general, not necessarily Gaussian, vector-valued process with stationary increments. Based on algebraic Riccati equations, we construct an estimator for the matrix $\bh$. Moreover, we prove the consistency of the estimator and derive its limiting distribution under natural assumptions. In addition, to motivate our work, we prove that the Langevin equation characterises all stationary processes in a multidimensional setting as well.
\end{abstract}

{\small
\medskip

\noindent
\textbf{AMS 2010 Mathematics Subject Classification:} 60G10, 62M10, 62H12, 62G05

\medskip

\noindent
\textbf{Keywords:} Langevin equation, Multivariate Ornstein-Uhlenbeck process, Stationary processes, Nonparametric estimation, Algebraic Riccati equations, Consistency
}


\section{Introduction}
In this article, we study statistical problems related to the multidimensional version of generalised Langevin equation
\begin{equation}\label{eq:langevin}
d U_t = -\bh U_t \,d t + d G_t, \qquad t\in\mathbb{R},
\end{equation}
with some stationary increment noise $G$. Here $\bh$ is a positive definite matrix, and the noise $G$ and the solution $U$ are understood as random vectors.

A classical Ornstein-Uhlenbeck process can be defined through the Langevin equation
\begin{equation}
\label{eq:langevin-1d}
d U_t = - \theta U_t \, dt + d W_t, \qquad t \in \mathbb{R},
\end{equation}
where $\theta>0$ is a parameter and $W$ is a Brownian motion. The stationary Ornstein-Uhlenbeck process $U$ can be obtained by a suitable choice of the initial condition $U_0$. Such equations have connections especially to physics, and this is part of the reason why Langevin equations of the form \eqref{eq:langevin} have received a lot of attention in the literature. In addition to the connections to physics, it was recently proven in \cite{Viitasaari-2016a} that, in one dimensional case, Langevin equations characterise essentially all stationary processes (for analogous results in discrete time, we refer to \cite{Voutilainen-Viitasaari-Ilmonen-2017,Voutilainen-Viitasaari-Ilmonen-2019}). This highlights the importance of \eqref{eq:langevin} even further.

The Equation \eqref{eq:langevin} is well-motivated, and there is a vast array of literature studying it with varying driving force $G$. One natural generalisation is to replace the Brownian motion with a L\'{e}vy process. In the infinite-dimensional case, Equation \eqref{eq:langevin-1d} driven by a L\'{e}vy-process has connections to different branches of probability theory such as stochastic partial differential equations, branching processes, generalised Mehler semigroups, and self-decomposable distributions. For a recent survey on the topic, we refer to \cite{applebaum}.

Another natural generalisation is to replace the Brownian motion $W$ in \eqref{eq:langevin-1d} with a fractional Brownian motion $B^H$. (For details on fractional Brownian motion, we refer e.g. to \cite{Mishura-2008}.) The solution $U$, called the \emph{fractional Ornstein-Uhlenbeck process}, was first introduced by \cite{Cheridito-Kawaguchi-Maejima-2003} (see also \cite{Kaarakka-Salminen-2011}). Statistical analysis related to the fractional Ornstein-Uhlenbeck model was initiated in \cite{Hu-Nualart-2010} and \cite{Kleptsyna-LeBreton-2002}, and it has been a very active research topic ever since. Of the studies on parameter estimation in such models, we mention \cite{Azmoodeh-Viitasaari-2015a,bajja-et-al,Ciprian-infinite,Brouste-Iacus-2013,dehling-et-al,Ciprian-Hermite,Es-Sebaiy-Ndiaye-2014,Es-Sebaiy-Tudor-2015,david-general-H,Kozachenko-Melnikov-Mishura-2015,Kubilius-Mishura-Ralchenko-Seleznjev-2015,maslowski-pospisil,Shen-Yin-Yan-2016,Shen-Xu-2014,Sottinen-Viitasaari-2017a,Sun-Guo-2015,Tanaka-2015} to name a few recent ones. Finally, we mention \cite{magdziarz}, that considers fractional extensions of the Levy-driven Ornstein-Uhlenbeck processes.

Despite the vast amount of literature related to \eqref{eq:langevin}, to the best of our knowledge most of it focuses on one-dimensional case, and with a specific driver $G$. In particular, even if the problem is studied in a greater generality to some extent, usually the assumptions are somehow related to the one-dimensional case, or to a specific driver. For example, in \cite{Ciprian-infinite,maslowski-pospisil} the authors studied infinite dimensional fractional Ornstein-Uhlenbeck processes, but there was only one unknown parameter $\bh \in \mathbb{R}$ to estimate and the driver was of a specific form. Similarly, for example in \cite{bajja-et-al,dehling-et-al} there were many parameters to estimate, but again the driver (and the model) was of a specific form. Finally, while in \cite{Sottinen-Viitasaari-2017a} the authors studied a more general noise $G$ that is not related to the fractional Brownian motion, $G$ was still assumed to be Gaussian and the equation was considered only in one dimension with one parameter to estimate. Similarly, in \cite{Ciprian-Hermite}, \cite{Nourdin-Tran},   the authors considered non-Gaussian case, but only with one parameter and a specific, though non-Gaussian, driver.

The aim of this article is to study general multidimensional Langevin equations with arbitrary stationary increment noise. That is, we study \eqref{eq:langevin} with an unknown positive definite matrix $\bh$. We prove that \eqref{eq:langevin} characterises (essentially) all stationary processes, thus giving a natural multidimensional extension of the results presented in \cite{Viitasaari-2016a}. Moreover, given that the underlying processes are square integrable, we provide representations of the cross-covariance matrix $\gamma(t)$ of the stationary solution $U_t$, and show that the unknown $\bh$ solves a certain continuous-time algebraic Riccati equation (CARE). Initiated by the seminal paper \cite{kalman1960contributions} by Kalman, CARE arise naturally in optimal control and filtering theory. As such, we relate the Langevin equation to these fields as well.

We also consider statistical estimation of the unknown matrix $\bh$. Motivated by the relation to CARE, we define the estimator as the solution to a perturbed CARE, in which the coefficient matrices are replaced by estimated ones, and where the cross-covariances $\gamma(t)$ are replaced by their estimators $\hat{\gamma}(t)$. We prove that our estimator is consistent whenever the cross-covariance estimators are consistent. We also study how the rate of convergence and the limiting distribution of our estimator are related to the convergence rate and the limiting distribution of $\hat{\gamma}(t)$.

The rest of the paper is organised as follows. In Section \ref{sec:main} we present and discuss our main results. In particular, we state the characterisation of stationary processes through \eqref{eq:langevin} and we provide the connection to CARE. We also define our estimator for the unknown $\bh$ and provide results on its asymptotic properties. In Subsection \ref{subsection:gaussian} we illustrate the applicability of our results to the Gaussian case. All the proofs are postponed to Section \ref{sec:proofs}.

\section{Multidimensional generalised Langevin equations}
\label{sec:main}
We consider the $n$-dimensional Langevin equation
\begin{equation}
\label{langevin}
dU_t = -\bh U_tdt + dG_t, \qquad t\in\mathbb{R}
\end{equation}
driven by $G = (G_t)_{t\in\mathbb{R}}$,
with a positive definite coefficient matrix $\bh$.
Note that $G$ and the solution $U$ are $n$-dimensional vector-valued processes. Here we understand the solution in the strong sense, with a given initial condition to be specified later. The components of the vectors are denoted by super indices, e.g. $U^{(k)}_t$ denotes the $k$th component of the vector $U_t$, and is a real-valued random process. We denote by $S^n$ the set of symmetric
$n\times n$-matrices, and with the notation $\bh>0$ we mean that the matrix $\bh$ is positive definite. Similarly, by writing $\bh\geq 0$ we mean that $\bh$ is positive semidefinite. If not stated otherwise, we use $\Vert\cdot\Vert$ to denote the standard $L^2$ norm and the corresponding induced matrix norm.
If two processes $(X_t)_{t\in\mathbb{R}}$ and $(Y_t)_{t\in\mathbb{R}}$ have equal finite dimensional distributions, we write $(X_t)_{t\in\mathbb{R}} \overset{law}{=} (Y_t)_{t\in\mathbb{R}}$. Furthermore, with stationary processes we mean
$n$-dimensional strictly stationary processes, i.e. processes for which $(X_{t+s})_{t\in\mathbb{R}} \overset{law}{=} (X_t)_{t\in\mathbb{R}}$ for every $s\in\mathbb{R}$. In addition, we assume that the driver $G$ (and consequently, the solution $U$), have continuous paths almost surely. This guarantees that integrals of type
\begin{equation*}
\int_s^t e^{\bh u} dX_u
\end{equation*}
can be understood componentwise as pathwise Riemann-Stieltjes integrals via integration by parts
\begin{equation*}
\int_s^t e^{\bh u} dX_u = e^{\bh t} X_t - e^{\bh s} X_s - \bh\int_s^t e^{\bh u} X_u du.
\end{equation*}
Indefinite integrals over an interval $[-\infty, t]$ are defined similarly as
\begin{equation}
\label{indefinite}
\int_{-\infty}^t e^{\bh u} dX_u = e^{\bh t}X_t - \bh\lim_{s\to-\infty} \int_s^t e^{\bh u} X_u du
\end{equation}
provided that the limit exists almost surely.

Our first main theorem below shows that the characterisation of stationary processes through \eqref{langevin} in dimension one, provided in \cite{Viitasaari-2016a}, can be generalised naturally to the multidimensional setting, and motivates the statistical analysis of Equation \eqref{langevin}. For this, we  present the following definition for the class $\mathcal{G}_{\bh}$ of possible drivers $G$.
\begin{defi}
\label{defi:GH}
Let $\bh > 0$ be fixed. Let $G= (G_t)_{t\in\mathbb{R}}$ be an $n$-dimensional stochastic process with stationary increments and $G_0=0$. We denote $G\in\mathcal{G}_\bh$ if

\begin{equation*}
\lim_{u\to\infty}\int_{-u}^0 e^{\bh s}dG_s
\end{equation*}
defines as an almost surely finite random vector.
\end{defi}
\begin{rem}
In the one-dimensional setting, existence of certain logarithmic moments are sufficient to ensure $G \in \mathcal{G}_\bh$. This result can be extended to the multidimensional setting in a straightforward manner. Consequently, our estimation procedure, that does rely on the existence of the second moments, always guarantees $G \in \mathcal{G}_\bh$.
\end{rem}
\begin{theorem}
\label{main1}
Let $\bh>0$ be fixed. A continuous time $n$-dimensional stochastic process $U = (U_t)_{t\in\mathbb{R}}$ is stationary if and only if it is the unique solution of the Langevin equation \eqref{langevin} for some $G\in\mathcal{G}_\bh$ and the initial value
\begin{equation}
\label{initial2}
U_0 = \int_{-\infty}^0 e^{\bh s} dG_s.
\end{equation}
That is
\begin{equation}
\label{stationarysolution}
U_t = e^{-\bh t}\int_{-\infty}^t e^{\bh s} dG_s.
\end{equation}
Moreover, the process $G$ is unique.
\end{theorem}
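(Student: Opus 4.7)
The plan splits the proof into sufficiency, necessity, and uniqueness of $G$. Throughout, the key matrix-analytic fact is that $\bh>0$ makes $\|e^{\bh s}\|$ decay exponentially to $0$ as $s\to-\infty$, since every eigenvalue of $\bh$ has strictly positive real part; this is the natural replacement in the multidimensional setting for the single factor $e^{-\theta s}$ used in the one-dimensional argument of \cite{Viitasaari-2016a}.

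For sufficiency, I take $G\in\mathcal{G}_\bh$ and define $U$ by \eqref{stationarysolution}. The key calculation is a time shift in the defining integral: substituting $r=v+h$ and writing $\tilde G_v^{(h)} := G_{v+h}-G_h$ gives
\begin{equation*}
U_{t+h} = e^{-\bh t}\int_{-\infty}^t e^{\bh v}\, d\tilde G_v^{(h)}.
\end{equation*}
Since $G$ has stationary increments and $\tilde G_0^{(h)}=0$, the process $\tilde G^{(h)}$ has the same finite-dimensional law as $G$, so carrying out this substitution jointly at $t_1,\dots,t_k$ yields $(U_{t_i+h})_i\overset{law}{=}(U_{t_i})_i$, which is stationarity. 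Well-definedness of the integrals at every $t$ reduces to the $t=0$ case (the hypothesis $G\in\mathcal{G}_\bh$) via the finite-range remainder $\int_0^t e^{\bh u}dG_u$. Integration by parts then verifies that $U$ solves \eqref{langevin} with initial condition \eqref{initial2}.

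For necessity, given a stationary $U$ with continuous paths, I set $G_t := U_t - U_0 + \bh\int_0^t U_s\,ds$. Then $G_0=0$ and $U$ satisfies \eqref{langevin} by construction. Stationary increments of $G$ follow from the identity
\begin{equation*}
G_{t+h}-G_{s+h} = U_{t+h}-U_{s+h} + \bh\int_s^t U_{u+h}\,du,
\end{equation*}
which writes every finite family of $G$-increments as a fixed functional of the shifted process $(U_{\cdot+h})$, whose joint law agrees with that of $(U_\cdot)$. Integrating the Langevin equation gives
\begin{equation*}
U_0 = e^{\bh s}U_s + \int_s^0 e^{\bh u}\,dG_u,
\end{equation*}
so the almost sure convergence $e^{\bh s}U_s\to 0$ as $s\to-\infty$ would simultaneously establish $G\in\mathcal{G}_\bh$, identify the limit as \eqref{initial2}, and, rerun at arbitrary $t$, produce \eqref{stationarysolution}.

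The main obstacle is precisely this almost sure convergence, because the hypotheses include no moment condition on $U$ beyond stationarity. My plan is to combine the deterministic exponential decay $\|e^{\bh s}\|\le C e^{\lambda s}$, where $\lambda>0$ is the smallest real part of the spectrum of $\bh$, with the tightness of $\{U_s\}_{s\in\mathbb R}$ that comes for free from $U_s\overset{law}{=}U_0$. Along the integer subsequence $s_k=-k$, a Borel-Cantelli step with truncation levels chosen slower than $e^{\lambda k}$ yields the convergence on that subsequence; continuity of $s\mapsto U_s$ together with the monotonicity of $\|e^{\bh s}\|$ then extends it to the continuous limit. Uniqueness of $G$ is immediate: integrating \eqref{langevin} from $0$ to $t$ forces $G_t = U_t - U_0 + \bh\int_0^t U_s\,ds$, so any admissible driver coincides with the one just constructed.
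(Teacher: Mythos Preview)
Your sufficiency and uniqueness arguments are correct and in fact more direct than the paper's. For necessity the paper takes a different route: it passes through a matrix-valued Lamperti transform, setting $X_t = t^{\bh}U_{\log t}$ to obtain an $\bh$-self-similar process, and then defines the noise as $Y_t = \int_0^t e^{-\bh u}\,dX_{e^u}$; stationary increments of $Y$ are read off from the self-similarity of $X$. A two-line computation shows this $Y$ coincides with your $G_t = U_t - U_0 + \bh\int_0^t U_s\,ds$, so the Lamperti apparatus is a conceptual detour to the same driver, and your direct construction makes uniqueness immediate rather than requiring the paper's separate ODE argument.

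The gap in your plan is the Borel--Cantelli step for $e^{\bh s}U_s\to 0$ almost surely. Along the integers (or for the suprema $V_k=\sup_{s\in[-k-1,-k]}\|U_s\|$ needed to pass to the continuous limit), summability of $\p(\|U_0\|>M_k)$ for some sequence $M_k=o(e^{\lambda k})$ forces $\e[\log^+\|U_0\|]<\infty$; tightness alone does not supply this, and no moment hypothesis is assumed on $U$. The paper runs into the identical obstruction: in its verification that $Y\in\mathcal{G}_\bh$ it computes $\int_{-\infty}^0 e^{\bh u}\,dY_u = X_1 - \lim_{t\to-\infty}X_{e^t}$ and records the limit only as $\overset{\p}{=}X_1$, i.e.\ convergence in probability. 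If you are content with that standard, replace the Borel--Cantelli paragraph by the one-line bound $\p(\|e^{\bh s}U_s\|>\varepsilon)\le\p(\|U_0\|>\varepsilon e^{-\lambda_{\min}s})\to 0$; if you want genuine almost sure convergence, the logarithmic moment hypothesis must be added explicitly.
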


Motivated by this result, let us now turn our attention to the statistical analysis of \eqref{langevin}. That is, we suppose that the solution $U$ is observed, and our aim is to define an estimator for the unknown parameter $\bh$. Our approach is based on utilizing the cross-covariance matrices, and for this reason we require some moment assumptions. In the sequel, we assume that the components $G^{(i)}$ of the driver $G$ satisfy, for all $i=1,\ldots,n$,
\begin{equation}
\label{eq:var-bound}
\sup_{s\in[0,1]} \e \left[G^{(i)}_s\right]^2 < \infty.
\end{equation}
This assumption ensures that $G$ is square-integrable, and consequently so is the solution $U$. We remark that this assumption ensures that $G \in \mathcal{G}_\bh$ (cf. Lemma \ref{lemma:existenceofintegral}). Also, without loss of generality, we assume that $G$ is centred, i.e. $\e(G_t) = 0$ for every $t\in\mathbb{R}$. This gives that also $\e(U_t)=0$ for every $t\in\mathbb{R}$.

Let us now introduce some notation. With $\gammah(t)$ we denote the cross-covariance matrix of the stationary solution $U$, namely

\begin{equation}
\label{cross-covariance}
\begin{split}
\gammah(t) = \e(U_tU_0^\top) &= \begin{bmatrix}
\e(U_t^{(1)} U_0^{(1)}) & \e(U_t^{(1)} U_0^{(2)})& \hdots& \e(U_t^{(1)} U_0^{(n)})\\
\e(U_t^{(2)} U_0^{(1)}) & \e(U_t^{(2)} U_0^{(2)})& \hdots& \e(U_t^{(2)} U_0^{(n)})\\
\vdots&\vdots&\ddots&\vdots\\
\e(U_t^{(n)} U_0^{(1)}) & \e(U_t^{(n)} U_0^{(2)})&\hdots& \e(U_t^{(n)} U_0^{(n)})
\end{bmatrix}.
\end{split}
\end{equation}
Notice that $\gammah(-t)= \gammah(t)^\top$. In addition, we denote a single element $\e(U_t^{(i)} U_0^{(j)})$ by $\gamma_{i,j}(t)$. We also define the following matrix coefficients for every $t\geq0$.

\begin{align}
\Bh_t &= \int_0^t \left( \gammah(s) - \gammah(s)^\top \right) ds \label{coefofriccati1}\\
\Ch_t &= \int_0^t \int_0^t \gammah(s-u) du ds \label{coefofriccati2}\\
\Dh_t &= \mathrm{cov}(G_t) - \mathrm{cov}(U_t-U_0). \label{coefofriccati3}
\end{align}

\begin{rem}
\label{rem:covariance}
The cross-covariance $\gammah(t)$ can be computed explicitly from \eqref{stationarysolution}. For representations in the case when $G$ has independent components, see Lemma \ref{lma:crosscov-rep-1} and Lemma \ref{lemma:covofU2}.
\end{rem}
With the help of the above notation, we are able to write the parameter matrix $\bh$ as a solution to the so-called continuous-time algebraic Riccati equation (CARE), with matrices $\Bh_t$, $\Ch_t$, and $\Dh_t$ as coefficients. This will  lead to a natural estimator for $\bh$.
\begin{theorem}
\label{main2}
Let $U$ be the solution of the Langevin equation \eqref{langevin} with $\bh >0$ and initial \eqref{initial2}. Then, for every $t\geq 0$, the CARE 
\begin{equation}
\label{riccati}
\Bh_t^\top \bh + \bh \Bh_t - \bh \Ch_t \bh + \Dh_t =0
\end{equation}
is satisfied.
\end{theorem}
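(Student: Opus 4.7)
The plan is to integrate the Langevin equation on $[0,t]$ to obtain the identity $G_t = (U_t - U_0) + \bh V_t$, where $V_t := \int_0^t U_s\,ds$, and then to compute $\mathrm{cov}(G_t)$ by expanding this sum. Since $\bh>0$ is symmetric, the expansion yields
\begin{equation*}
\mathrm{cov}(G_t) = \mathrm{cov}(U_t-U_0) + \mathrm{cov}(U_t-U_0,V_t)\,\bh + \bh\,\mathrm{cov}(V_t,U_t-U_0) + \bh\,\mathrm{cov}(V_t)\,\bh.
\end{equation*}
Rearranging this identity immediately produces a relation of the same shape as the CARE \eqref{riccati}; it remains only to identify the cross-covariance blocks with the matrices $\Bh_t$ and $\Ch_t$.

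The identification is a direct Fubini computation. Using $\gammah(-t)=\gammah(t)^\top$, I would first write
\begin{equation*}
\mathrm{cov}(V_t) = \int_0^t\!\!\int_0^t \gammah(s-u)\,du\,ds = \Ch_t,
\end{equation*}
and then
\begin{equation*}
\mathrm{cov}(U_t-U_0,V_t) = \int_0^t \bigl[\gammah(t-s) - \gammah(-s)\bigr]ds = \int_0^t\gammah(u)\,du - \int_0^t\gammah(s)^\top ds = \Bh_t,
\end{equation*}
after the substitution $u=t-s$. By definition $\Bh_t$ is antisymmetric, i.e.\ $\Bh_t^\top = -\Bh_t$, which is exactly what is needed to convert the two cross terms $\Bh_t\bh + \bh\Bh_t^\top$ in the expanded covariance into $-\Bh_t^\top\bh - \bh\Bh_t$ as they appear in \eqref{riccati}. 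Putting this together and moving $\mathrm{cov}(U_t-U_0)$ to the left yields
\begin{equation*}
\Dh_t = -\Bh_t^\top\bh - \bh\Bh_t + \bh\Ch_t\bh,
\end{equation*}
which is \eqref{riccati}.

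The only nontrivial points will be justifying the integration-by-parts/Fubini manipulations. The centering $\e(G_t)=\e(U_t)=0$ and the uniform $L^2$ bound \eqref{eq:var-bound} on the components of $G$ (which by Theorem \ref{main1} transfer to the stationary solution $U$) guarantee that $U$ is a square-integrable continuous process, so $V_t$ is an honest $L^2$ integral and the exchanges of expectation and integration needed for $\mathrm{cov}(V_t)$ and $\mathrm{cov}(U_t-U_0, V_t)$ are valid on the bounded interval $[0,t]$. I expect the hardest piece of bookkeeping to be the sign/transpose tracking: making sure that the antisymmetry $\Bh_t^\top = -\Bh_t$ is applied consistently so that the expanded covariance identity matches \eqref{riccati} exactly, rather than a transposed or sign-flipped variant.
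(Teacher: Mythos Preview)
Your proposal is correct and follows essentially the same route as the paper: integrate \eqref{langevin} to obtain $G_t = (U_t-U_0) + \bh\int_0^t U_s\,ds$, expand $G_tG_t^\top$, take expectations, and identify the resulting blocks with $\Bh_t$, $\Ch_t$, $\Dh_t$. The paper carries out the same expansion and the same change of variable $u=t-s$ in the cross term, arriving at $\Bh_t\bh + \bh\Bh_t^\top$ before the final rearrangement; your explicit use of the antisymmetry $\Bh_t^\top=-\Bh_t$ to match the signs in \eqref{riccati} is exactly the ``rearranging terms'' step the paper leaves implicit.
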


\begin{rem}
\label{rem:1d}
In the one-dimensional case $\Bh_t \equiv 0$. After a change of variable \eqref{riccati} transforms into
\begin{equation*}
2\bh^2\int_0^t \gamma(z) (t-z)dz = v(t) + 2\gamma(t) - 2\gamma(0),
\end{equation*}
where $v(t)$ is the variance function of $G$. From this, we can compute a solution $\bh>0$ easily whenever $\int_0^t \gamma(z)(t-z)dz \neq 0$ and $v(t)$ is known.  More generally, the coefficients $\Bh_t$, $\Ch_t$, and $\Dh_t$ can be computed from the observed process $U$ if one value of the covariance matrix function $t \mapsto \mathrm{cov}(G_t)$ of the noise is known. In the literature, it is a typical assumption that the variance function of the noise is known completely (up to scaling).
\end{rem}
From practical point of view, it is desirable that \eqref{riccati} admits a unique positive definite solution. Indeed, then the solution is automatically the correct parameter matrix $\bh$. Moreover, uniqueness of the solution is also a wanted feature for numerical \\methods. If the coefficient matrices $\Ch_t$ and $\Dh_t$ are both positive definite, then the solution is unique (in the set of positive semidefinite matrices). In our model, it turns out that this is usually the case if one chooses $t$ appropriately. A detailed discussion on the matter is postponed to Subsection \ref{subsec:uniqueness} (See also Remark \ref{rem:simulation-and-practice} below on how $t$ can be chosen in practice).
\begin{rem}
Even if the solution is unique, \eqref{riccati} is rarely solvable in a closed form. Thus, in practice or for simulations, one has to apply some numerical method. On the other hand, even in the one-dimensional general Gaussian setup one may need to rely on numerical approximations. For example, the ergodicity estimator studied in \cite{Sottinen-Viitasaari-2017a} is based on a function $\psi^{-1}$ that can be computed explicitly only in some particular cases. Actually, applying our method to the one-dimensional case we observe a closed form expression for the solution (cf. Remark \ref{rem:1d}). For numerical methods associated to \eqref{riccati}, see e.g. \cite{byers1987solving,laub1979schur} and the monograph \cite{bini2012numerical}.
\end{rem}
In the sequel, we assume that $t$ is chosen such that $\Ch_t,\Dh_t>0$, guaranteeing that $\bh$ is the unique solution to \eqref{riccati}. For notational simplicity, we will omit the subindex $t$ and simply write
\begin{equation}
\label{CARE}
\Bh^\top \bh + \bh \Bh - \bh \Ch \bh + \Dh = 0
\end{equation}
whenever confusion cannot arise.

Suppose now that we have an observation window $[0,T]$. We define estimators $\hat{\Bh}_T$, $\hat{\Ch}_T$, and $\hat{\Dh}_T$ for the coefficient matrices by replacing $\gamma_{i,j}(s)$ with any cross-covariance estimator $\hat{\gamma}_{T,i,j}(s)$ in the defining equations \eqref{coefofriccati1}-\eqref{coefofriccati3} (see Section \ref{subsection:gaussian} for an example of covariance estimator). We also write $\Delta_T \Bh=\hat{\Bh}_T -\Bh$,
$\Delta_T \Ch=\hat{\Ch}_T -\Ch$, and $\Delta_T \Dh=\hat{\Dh}_T -\Dh$. This leads to a perturbed CARE that gives us an estimator for $\bh$.
\begin{defi}
\label{def:estimator}
The estimator $\hat{\bh}_T$ is defined as the positive semidefinite solution to the perturbed CARE
\begin{equation}
\label{perCARE}
\hat{\Bh}_T^\top\hat{\bh}_T + \hat{\bh}_T\hat{\Bh}_T - \hat{\bh}_T\hat{\Ch}_T\hat{\bh}_T + \hat{\Dh}_T = 0
\end{equation}
whenever there exists a unique solution in the class of positive semidefinite matrices. If the solution does not exists, we set $\hat{\bh}_T = 0$.
\end{defi}
The idea of our estimator is that if the estimators $\hat{\gamma}_{T,i,j}(s)$ are  consistent and $\Ch,\Dh>0$ in the original CARE \eqref{riccati}, then the perturbed version \eqref{perCARE} automatically has a unique solution $\hat{\bh}_T$ (with probability increasing to one as $T$ grows), that converges strongly to $\bh$. 
\begin{theorem}
\label{theo:pertur}
Suppose $\Ch, \Dh >0$ and assume that
\begin{equation}
\label{covarianceinP}
\sup_{s\in[0,t]} \Vert\hat{\gamma}_{T}(s)- \gamma(s)\Vert \overset{\p}{\longrightarrow} 0.
\end{equation}
Then for $\hat{\bh}_T,$ given by Definition \ref{def:estimator}, we have
\begin{equation}
\label{solutioninP}
\Vert\hat{\bh}_T - \bh\Vert \overset{\p}{\longrightarrow} 0.
\end{equation}
\end{theorem}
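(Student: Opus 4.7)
The plan is to reduce the statement to three pieces: (i) propagate the uniform probability convergence of $\hat{\gamma}_T$ into convergence of the coefficient estimators, (ii) show that with probability tending to one the perturbed CARE has a well-defined unique positive semidefinite solution, and (iii) invoke continuous dependence of the unique positive semidefinite solution of the CARE on its coefficients.

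For (i), I would note that $\Bh_t$, $\Ch_t$ are bounded linear functionals of $\gamma(\cdot)$ on $[0,t]$, while $\Dh_t$ depends on $\gamma$ only through the values $\gamma(0)$ and $\gamma(t)$ (using $\mathrm{cov}(U_t-U_0)=2\gamma(0)-\gamma(t)-\gamma(t)^{\top}$ and that $\mathrm{cov}(G_t)$ is treated as known, cf.\ Remark \ref{rem:1d}). Hence the triangle inequality gives bounds of the form
\begin{equation*}
\Vert \hat{\Bh}_T-\Bh\Vert \leq 2t\sup_{s\in[0,t]}\Vert\hat{\gamma}_T(s)-\gamma(s)\Vert, \qquad \Vert\hat{\Ch}_T-\Ch\Vert\leq t^{2}\sup_{s\in[0,t]}\Vert\hat{\gamma}_T(s)-\gamma(s)\Vert,
\end{equation*}
and similarly for $\hat{\Dh}_T$. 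Assumption \eqref{covarianceinP} therefore yields $\hat{\Bh}_T\xrightarrow{\p}\Bh$, $\hat{\Ch}_T\xrightarrow{\p}\Ch$, $\hat{\Dh}_T\xrightarrow{\p}\Dh$.

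For (ii), since the cone of positive definite matrices is open in $S^{n}$ and $\Ch,\Dh>0$, the event $A_T:=\{\hat{\Ch}_T>0,\,\hat{\Dh}_T>0\}$ satisfies $\p(A_T)\to 1$. On $A_T$, by the results discussed in Subsection \ref{subsec:uniqueness}, the perturbed equation \eqref{perCARE} admits a unique positive semidefinite solution, so $\hat{\bh}_T$ is genuinely this CARE solution rather than the fallback value $0$.

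For (iii), the main technical content, I would use continuous dependence of the unique positive semidefinite CARE solution on the coefficients. Write $F(X;\Bh,\Ch,\Dh):=\Bh^{\top}X+X\Bh-X\Ch X+\Dh$; its Fréchet derivative in $X$ at the solution $\bh$ is the Lyapunov operator $H\mapsto (\Bh-\Ch\bh)^{\top}H+H(\Bh-\Ch\bh)$, which is invertible on $S^{n}$ because the hypotheses $\Ch,\Dh>0$ force the closed-loop matrix $\Bh-\Ch\bh$ to be Hurwitz (this is the standard CARE fact underlying uniqueness). The implicit function theorem then produces a neighbourhood $\mathcal{U}$ of $(\Bh,\Ch,\Dh)$ and a continuous map $\Phi:\mathcal{U}\to S^{n}$ with $\Phi(\Bh,\Ch,\Dh)=\bh$ and $F(\Phi(\cdot);\cdot)=0$. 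By uniqueness of the positive semidefinite solution on $A_T$ we must have $\hat{\bh}_T=\Phi(\hat{\Bh}_T,\hat{\Ch}_T,\hat{\Dh}_T)$ as soon as the coefficients lie in $\mathcal{U}$. Combining with (i) and the continuous mapping theorem in probability, and absorbing the vanishing-probability event $A_T^{c}$ (on which $\hat{\bh}_T=0$) into an error of probability $o(1)$, one concludes $\Vert\hat{\bh}_T-\bh\Vert\xrightarrow{\p}0$.

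The main obstacle I anticipate is step (iii): making rigorous the continuity of the CARE solution map, in particular justifying that the Lyapunov linearisation is invertible under only $\Ch,\Dh>0$. A clean alternative, if a direct implicit function argument feels heavy, is to argue by contradiction along subsequences: if $\hat{\bh}_T$ did not converge in probability, one could extract an a.s.-convergent subsequence whose limit $\bh_{\infty}$ would still satisfy the limiting CARE \eqref{CARE} (passing to the limit in \eqref{perCARE} using step (i)) and would be positive semidefinite, contradicting uniqueness. This subsequence argument only requires pre-compactness of $\{\hat{\bh}_T\}$, which can be obtained from a uniform operator-norm bound derived from $\hat{\Ch}_T^{-1}$ being bounded with high probability.
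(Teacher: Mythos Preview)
Your proposal is correct and conceptually close to the paper's argument, but the technical engine in step (iii) differs. The paper does not invoke the implicit function theorem directly; instead it quotes an explicit perturbation result for CARE due to Sun (Theorem \ref{theorem:pertur} in the paper): under a smallness condition $\delta_T+\sqrt{lg_T\epsilon_T}<l/2$ on the coefficient perturbations, the perturbed equation has a unique positive semidefinite solution and one has the quantitative bound $\Vert\hat{\bh}_T-\bh\Vert\le\epsilon_T^*$. Here $\epsilon_T,\delta_T,g_T$ are built from $\Vert\Delta_T\Bh\Vert,\Vert\Delta_T\Ch\Vert,\Vert\Delta_T\Dh\Vert$, and the underlying invertible linear operator $L(M)=\Phi^\top M+M\Phi$ is exactly the Lyapunov derivative you identify. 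The paper then bounds $\epsilon_T$ linearly by $\sup_{s\in[0,t]}\Vert\hat{\gamma}_T(s)-\gamma(s)\Vert$ (Lemma \ref{lemma:frobenius}), yielding a Lipschitz-type estimate $\Vert\hat{\bh}_T-\bh\Vert\le\tilde C\sup_s\Vert\hat{\gamma}_T(s)-\gamma(s)\Vert$ on a high-probability event, from which \eqref{solutioninP} follows by an elementary splitting.

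What each approach buys: your implicit function theorem route is self-contained and conceptually clean, and your identification of the Hurwitz property of $\Bh-\Ch\bh$ as the reason the linearisation is invertible is exactly the right point (cf.\ Lemma \ref{lemma:uniqueness}). The paper's route, by citing Sun's result, gives in one stroke both existence of the perturbed solution and an explicit first-order bound; this quantitative control is reused in the proof of Theorem \ref{theo:fclt} to extract rates and limit laws, which a bare continuity statement would not provide. Your alternative subsequence argument is also valid, though the pre-compactness step would need the same kind of quantitative input (boundedness of $\hat{\Ch}_T^{-1}$) that the Sun bound packages more directly.
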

\begin{rem}
\label{rem:sufficient}
If the convergence in \eqref{covarianceinP} holds almost surely, we obtain a strong consistent estimator, i.e.
\begin{equation*}
\Vert\hat{\bh}_T - \bh\Vert  \overset{\text{a.s.}}{\longrightarrow} 0.
\end{equation*}
Moreover, by our proof (cf. Lemma \ref{lemma:frobenius}) we obtain that instead of \eqref{covarianceinP}, weaker conditions
$$
\int_0^t \Vert \hat{\gamma}_T(s) - \gamma(s)\Vert ds \overset{\p}{\longrightarrow} 0
$$ 
and
$$
\Vert \hat{\gamma}_T(\tau) - \gamma(\tau)\Vert \overset{\p}{\longrightarrow} 0, \tau \in \{0,t\}
$$
are sufficient. These conditions are usually easier to verify in practice.
\end{rem}
\begin{rem}
\label{rem:simulation-and-practice}
In practice, one does not know the underlying exact model, and thus one cannot determine whether for given $t$ we have $\Ch_t,\Dh_t>0$. However, one can always pre-check whether, for a given $t$, matrices $\hat{\Ch}_T, \hat{\Dh}_T$ that are computed from the observations are positive definite. This together with \eqref{covarianceinP} indicates $\Ch_t,\Dh_t> 0$ implying that the original CARE \eqref{CARE} has a unique positive semidefinite solution $\bh$ (cf. Theorem \ref{theorem:pertur}). Now Theorem \ref{theo:pertur} applies, and consequently one can estimate $\bh$ from the observations by applying any numerical method for CARE, without pre-knowledge on positive definitiness of $\Ch_t$ and $\Dh_t$. This practical approach can also be used for simulations. 
\end{rem}
By Theorem \ref{theo:pertur}, the consistency of $\hat{\bh}_T$ is inherited from the consistency of $\hat{\gamma}_{T}$. Similarly, the rate of convergence and the limiting distribution for $\hat{\bh}_T$ follow from the convergence rate and the limiting distribution of $\hat{\gamma}_{T}$, respectively.
\begin{theorem}
\label{theo:fclt}
Let $X = (X_s)_{s\in[0,t]}$ be an $n^2$-dimensional stochastic process with continuous paths almost surely and let $l(T)$ be an arbitrary rate function. If
\begin{equation}
\label{functionalcovariance}
l(T)\vect (\hat{\gamma}_T(s)- \gamma(s)) \overset{\text{law}}{\longrightarrow} X_s
\end{equation}
in the uniform topology of continuous functions, then: \begin{itemize}
\item[(1)] If $\tilde{X}_s$ is the permutation of elements of $X_s$ that corresponds to the order of elements of $\vect(\gamma(s)^\top)$, we have
\begin{equation*}
l(T) \vect(\Delta_T \Ch, \Delta_T \Bh, \Delta_T \Dh) \overset{\text{law}}{\longrightarrow}  \begin{bmatrix*}
\int_0^ t (t-s)(X_s + \tilde{X}_s) ds\\
\int_0^t \left(X_s -\tilde{X}_s\right) ds\\
2X_0 - X_t -\tilde{X}_t
\end{bmatrix*}
\eqqcolon L_1(X).
\end{equation*}
\item[(2)] If $\Ch, \Dh > 0$ and $\hat{\bh}_T$ is given by Definition \ref{def:estimator}, we have
\begin{equation*}
l(T)  \vect(\hat{\bh}_T - \bh) \overset{\text{law}}{\longrightarrow} L_2(L_1(X)),
\end{equation*}
where $L_2: \mathbb{R}^{3n^2} \rightarrow \mathbb{R}^{n^2}$ is a linear operator depending only on $\bh$, $t$ and the cross-covariance of $G$.
\end{itemize}
\end{theorem}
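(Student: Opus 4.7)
My plan is to derive both parts by applying the continuous mapping theorem: part (1) by recognising $\Delta_T\Ch$, $\Delta_T\Bh$, $\Delta_T\Dh$ as continuous linear images of $\hat\gamma_T-\gamma$, and part (2) by inverting the linearised (Lyapunov) operator obtained from differentiating the CARE at its stabilising solution.

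For part (1), I would express each of $\vect(\Delta_T\Ch)$, $\vect(\Delta_T\Bh)$, $\vect(\Delta_T\Dh)$ as the image of $\hat\gamma_T-\gamma$ (restricted to $[0,t]$) under a fixed continuous linear functional. For $\Ch$, the substitution $z=s-u$ converts the double integral over $[0,t]^2$ into $\int_{-t}^{t}(t-|z|)\vect(\hat\gamma_T(z)-\gamma(z))\,dz$; splitting at $0$ and using $\gamma(-z)=\gamma(z)^\top$ together with the fact that $\vect(A^\top)$ is a fixed permutation of $\vect(A)$ (this is exactly the permutation defining $\tilde X_s$) rewrites everything in terms of values on $[0,t]$. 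The same manipulation handles $\Bh$ (a single integral with an antisymmetric combination) and $\Dh$ (via $\mathrm{cov}(U_t-U_0)=2\gamma(0)-\gamma(t)-\gamma(t)^\top$, giving point evaluations at $0$ and $t$). Since integration against a bounded weight and point evaluation are continuous linear functionals on $C([0,t];\mathbb R^{n\times n})$ with the uniform norm, the continuous mapping theorem applied to \eqref{functionalcovariance} yields part~(1), with $L_1$ assembled explicitly from these three functionals.

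For part (2), I would subtract \eqref{CARE} from \eqref{perCARE} and expand around $\bh$, writing $\Delta:=\hat\bh_T-\bh$, $\delta\Bh:=\Delta_T\Bh$, $\delta\Ch:=\Delta_T\Ch$, $\delta\Dh:=\Delta_T\Dh$. Collecting terms linear in $\Delta$ on the left gives
\begin{equation*}
\mathcal L(\Delta) := \Bh^\top\Delta+\Delta\Bh-\bh\Ch\Delta-\Delta\Ch\bh = -\big[\,\delta\Bh^\top\bh+\bh\,\delta\Bh-\bh\,\delta\Ch\,\bh+\delta\Dh\,\big]-R_T,
\end{equation*}
where $R_T$ gathers the remaining terms (products containing at least one factor $\Delta$ together with a $\delta$-quantity or another $\Delta$). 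Using $\Bh^\top=-\Bh$ and the symmetry of $\Ch$ and $\bh$, the operator $\mathcal L$ coincides with the Lyapunov operator $X\mapsto A^\top X+XA$ for $A:=\Bh-\Ch\bh$. Since $\bh$ is the stabilising positive-definite solution of \eqref{CARE} under $\Ch,\Dh>0$, standard CARE theory gives that $A$ is Hurwitz, hence $\mathcal L$ is a bijection on $\mathbb R^{n\times n}$. Theorem~\ref{theo:pertur} yields $\Delta\overset{\p}{\longrightarrow}0$; combined with part~(1), which gives $(\delta\Bh,\delta\Ch,\delta\Dh)=O_{\p}(l(T)^{-1})$, the implicit function theorem applied to the CARE map at the stabilising solution ensures $\Delta=O_{\p}(l(T)^{-1})$, and consequently $l(T)R_T\overset{\p}{\longrightarrow}0$. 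Applying $\mathcal L^{-1}$ to the equation and passing to the limit via Slutsky and the continuous mapping theorem yields part~(2), with $L_2$ defined as the composition of $\mathcal L^{-1}$ (written in $\vect$-form) with the linear map $(\delta\Ch,\delta\Bh,\delta\Dh)\mapsto -(\delta\Bh^\top\bh+\bh\,\delta\Bh-\bh\,\delta\Ch\,\bh+\delta\Dh)$, which depends only on $\bh$, $\Ch$ and $\Bh$, and hence only on $\bh$, $t$, and the cross-covariance of $G$.

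The principal obstacle is proving bijectivity of $\mathcal L$, i.e.\ that the closed-loop matrix $\Bh-\Ch\bh$ is Hurwitz; this is the analytic fact unlocking both the inversion and, through the implicit function theorem, the rate $\Delta=O_{\p}(l(T)^{-1})$ needed to control the nonlinear remainder $R_T$. Once this is established, the remainder of the proof reduces to straightforward perturbation bookkeeping and routine continuous mapping and Slutsky arguments in the vectorisation space.
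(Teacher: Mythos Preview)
Your proposal is correct and follows essentially the same route as the paper: part~(1) via the continuous mapping theorem applied to explicit linear functionals of $\hat\gamma_T-\gamma$, and part~(2) by subtracting the two CAREs, inverting the Lyapunov operator $\Delta\mapsto\Phi^\top\Delta+\Delta\Phi$ with $\Phi=\Bh-\Ch\bh$ (stable by the standard CARE uniqueness theory, Lemma~\ref{lemma:uniqueness} in the paper), and disposing of the quadratic remainder via Slutsky. The only tactical difference is that the paper obtains the rate $\Delta_T\bh=O_{\p}(l(T)^{-1})$ and the high-probability existence of $\hat\bh_T$ from an explicit CARE perturbation bound (Sun~1998, stated as Theorem~\ref{theorem:pertur}) rather than the implicit function theorem; this has the side benefit of making the identification of the local smooth branch with the estimator of Definition~\ref{def:estimator} (which is set to $0$ when no unique PSD solution exists) fully explicit via the indicator $\mathbbm{1}_{A_T}$, a point you should spell out when writing up your version.
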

\begin{rem}
The operator $L_2$ is given explicitly in the proof, see page 25.
\end{rem}

\subsection{On the uniqueness of the solution to \eqref{riccati}}
\label{subsec:uniqueness}
The uniqueness of the solution to \eqref{riccati} is crucially important, as otherwise we cannot guarantee that a convergent numerical scheme (which we have to apply in practice) converges to the true parameter $\bh$. In our case, it turns out that one can usually choose $t$ such that $\Ch_t,\Dh_t >0$ giving us  uniqueness. We next address the uniqueness issue particularly in our case. For the general theory of algebraic Riccati equations, see e.g. \cite{lancaster1995algebraic}.

We begin with some definitions.
\begin{defi}
\label{defi:stable}
A square matrix $A$ is stable if all its eigenvalues are in the open left half-plane.
\end{defi}

\begin{defi}
\label{defi:stabilizable}
A matrix pair $(A, B)$ is stabilizable if there exists a matrix $K$ such that $A + BK$ is stable.
\end{defi}

\begin{defi}
\label{defi:detectable}
A real matrix pair $(A, B)$ is detectable if $(B^\top, A^\top)$ is stabilizable.
\end{defi}
We utilise the following uniqueness result (for more details on the topic, see e.g. \cite{kucera1972contribution} or \cite{wonham1968matrix}) to our case under the assumption that $\Ch_t,\Dh_t\geq 0$.

\begin{lemma}
\label{lemma:uniqueness}
Let $\Ch_t, \Dh_t \geq 0$. If $(\Bh_t, \Ch_t)$ is stabilizable and $(\Dh_t, \Bh_t)$ is detectable, then the continuous time algebraic Riccati equation \eqref{riccati} has a unique positive semidefinite solution $\bh$. Furthermore, the matrix $\Bh_t - \Ch_t\bh$ is stable.
\end{lemma}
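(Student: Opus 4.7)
The plan is to recognise \eqref{riccati} as an instance of the standard continuous-time algebraic Riccati equation that appears in LQR optimal control theory, and then invoke a classical uniqueness theorem essentially verbatim. Since the paper explicitly says that this lemma is an application of established CARE theory (citing \cite{kucera1972contribution,wonham1968matrix}), the work lies almost entirely in checking that the hypotheses match.

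First I would put \eqref{riccati} into canonical form. Because $\Ch_t \geq 0$ and $\Dh_t \geq 0$, I can factor $\Ch_t = L L^\top$ (e.g.\ via the symmetric positive semidefinite square root) and $\Dh_t = M^\top M$. Substituting, the CARE becomes
\begin{equation*}
\Bh_t^\top \bh + \bh \Bh_t - \bh L L^\top \bh + M^\top M = 0,
\end{equation*}
which is the standard LQR Riccati equation with system matrix $A = \Bh_t$, input matrix $B = L$, input weight $R = I$, and state weight $Q = M^\top M$.

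Second, I would verify that the stabilizability and detectability hypotheses transfer correctly through this factorisation. Using the PBH rank characterisation, stabilizability of $(\Bh_t, \Ch_t)$ in the sense of Definition \ref{defi:stabilizable} is equivalent to stabilizability of $(\Bh_t, L)$, because $\mathrm{Im}(\Ch_t) = \mathrm{Im}(L)$ implies $\rank[\lambda I - \Bh_t,\, \Ch_t] = \rank[\lambda I - \Bh_t,\, L]$ for every $\lambda \in \mathbb{C}$. An analogous argument shows that detectability of $(\Dh_t, \Bh_t)$ in the sense of Definition \ref{defi:detectable} coincides with detectability of $(M, \Bh_t)$.

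Finally, I would invoke the Kučera--Wonham theorem: under stabilizability of $(A, B)$ and detectability of $(Q^{1/2}, A)$, the standard LQR CARE admits a unique positive semidefinite solution $\bh$, and the closed-loop matrix $A - B B^\top \bh = \Bh_t - \Ch_t \bh$ is stable. Both conclusions of the lemma then follow directly. The only potential obstacle is the routine bookkeeping that the PBH rank tests survive the factorisations; no substantive new work is needed beyond that.
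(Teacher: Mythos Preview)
The paper does not prove this lemma at all: it is stated as a classical result from control theory, with a parenthetical reference to \cite{kucera1972contribution} and \cite{wonham1968matrix}, and is then used as a black box. Your proposal---recasting \eqref{riccati} in standard LQR form via semidefinite factorisations of $\Ch_t$ and $\Dh_t$, checking via the PBH test that stabilizability/detectability survive the factorisation, and then invoking the Ku\v{c}era--Wonham theorem---is exactly the content behind that citation, so your approach is correct and matches the paper's (implicit) treatment.
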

With this we obtain the following useful corollary.
\begin{kor}
Let $C_t,D_t>0$. Then \eqref{riccati} has a unique positive definite solution $\bh$.
\end{kor}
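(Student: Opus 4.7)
The plan is to invoke Lemma \ref{lemma:uniqueness} and then upgrade the conclusion from positive semidefinite to positive definite using the structure of the equation.

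First I would verify the hypotheses of Lemma \ref{lemma:uniqueness}. Since $\Ch_t>0$, the matrix $\Ch_t$ is invertible, so for any matrix $M$ we can solve $\Bh_t + \Ch_t K = M$ by $K = \Ch_t^{-1}(M-\Bh_t)$. Choosing $M=-I$ (or any stable matrix) shows that $(\Bh_t,\Ch_t)$ is stabilizable in the sense of Definition \ref{defi:stabilizable}. For detectability of $(\Dh_t,\Bh_t)$, the definition requires $(\Bh_t^\top,\Dh_t^\top)$ to be stabilizable; since $\Dh_t>0$ is symmetric and invertible, the same argument (with $K = \Dh_t^{-1}(-I-\Bh_t^\top)$) gives a stabilizing feedback. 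Note also that $\Ch_t,\Dh_t>0$ in particular implies $\Ch_t,\Dh_t\geq 0$, so Lemma \ref{lemma:uniqueness} applies directly and yields a unique positive semidefinite solution $\bh$ to \eqref{riccati}.

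It remains to show $\bh>0$, i.e.\ that $\bh$ has no zero eigenvalue. Suppose for contradiction that there exists a nonzero vector $v\in\mathbb{R}^n$ with $\bh v=0$. Multiplying \eqref{riccati} on the left by $v^\top$ and on the right by $v$ gives
\begin{equation*}
v^\top \Bh_t^\top \bh v + v^\top \bh \Bh_t v - v^\top \bh \Ch_t \bh v + v^\top \Dh_t v = 0.
\end{equation*}
The first three terms vanish because $\bh v=0$ (and, by symmetry of $\bh$, also $v^\top\bh=0$), leaving $v^\top \Dh_t v=0$. Since $\Dh_t>0$, this forces $v=0$, a contradiction. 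Hence $\bh$ is strictly positive definite, completing the proof.

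I do not anticipate any real obstacle here: the verification of stabilizability and detectability is immediate once we use invertibility of $\Ch_t$ and $\Dh_t$, and the upgrade from semidefinite to definite is a one-line contradiction argument exploiting the constant term $\Dh_t$ of the CARE. The only thing to be careful about is that the definition of detectability in Definition \ref{defi:detectable} is phrased in terms of the transpose pair, which one should unpack correctly before constructing the stabilizing feedback.
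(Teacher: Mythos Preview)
Your verification of stabilizability and detectability is essentially identical to the paper's: both of you exploit invertibility of $\Ch_t$ and $\Dh_t$ to manufacture a feedback $K$ landing $\Bh_t+\Ch_t K$ (respectively $\Bh_t^\top + \Dh_t^\top K$) on a prescribed stable matrix. The paper writes it with an arbitrary stable $S$, you take $S=-I$; no difference in substance.

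Where you diverge is in the upgrade from $\bh\geq 0$ to $\bh>0$. The paper does not argue algebraically at all: it simply observes that, by Theorem~\ref{main2}, the true parameter matrix of the Langevin model---which is positive definite by standing assumption---already solves \eqref{riccati}, so uniqueness forces the unique semidefinite solution to coincide with it. Your contradiction argument (hit the CARE with $v^\top(\cdot)v$ for a null vector $v$ of $\bh$ and use $\Dh_t>0$) is instead purely algebraic and does not rely on knowing a priori that a positive definite solution exists. Both are correct; your route is more self-contained as a statement about Riccati equations, while the paper's route is a one-liner given the surrounding model context.
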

\begin{proof}
Let $S$ be any stable matrix and set $K_1 = \Ch_t^{-1}(S -\Bh_t)$ and $K_2 = (\Dh_t^\top)^{-1}(S-\Bh_t^\top)$. Then $\Bh_t + \Ch_tK_1 = S  = \Bh_t^\top + \Dh_t^\top K_2$, and the conditions of Lemma \ref{lemma:uniqueness} are satisfied. Thus CARE \eqref{riccati} has a unique solution $\bh\geq 0$, which is then automatically the true parameter matrix $\bh>0$.
\end{proof}
Let us now address when one can choose $t$ such that $\Ch_t,\Dh_t>0$. For this recall that
\begin{equation*}
\Ch_t = \e \left[\int_0^t U_s ds \left( \int_0^t U_s ds\right)^\top \right] = \mathrm{cov}\left(\int_0^t U_s ds \right)
\end{equation*}
and
$$
\Dh_t = \mathrm{cov}(G_t) - \mathrm{cov}(U_t-U_0).
$$
Thus $\Ch_t\geq 0$ for every $t$. Consider now the matrix $\Dh_t$. By stationarity of $U$ the elements of $\mathrm{cov}(U_t-U_0)$ are uniformly bounded, implying
$a^\top \mathrm{cov}(U_t-U_0) a < C\Vert a\Vert^2$ for some constant $C$. On the other hand, we have
$a^\top \mathrm{cov}(G_t) a \geq \lambda_{min}\Vert a\Vert^2$, where $\lambda_{min}$ is the smallest eigenvalue of $\mathrm{cov}(G_t)$.
Thus
\begin{equation*}
a^\top \Dh_t a \geq (\lambda_{min} - C)\Vert a\Vert^2,
\end{equation*}
implying $\Dh_t>0$ provided that $\lambda_{min}$ grows sufficiently. This happens, for example, when $G$ has independent components with growing variances.

Consider next the matrix $\Ch_t$. Since $\Ch_t\geq 0$ always, it suffices to find one $t$ such that $\Ch_t>0$. Let us, for a moment, suppose that this is not possible. Then $\rank(\Ch_t) \leq n-1$ implying that there exists a (vector-valued) function $a(t)$ such that, almost surely and for all $t$,
\begin{equation*}
a(t)^\top \int_0^ t U_s ds = 0.
\end{equation*}
Without loss of generality we can assume that $a(t)$ is normalised and oriented consistently. Furthermore, it follows from the continuity of $\int_0^t U_sds$ that $a(t)^\top$ is also continuous. This further implies that $a(t)^\top \int_0^ t U_s ds$ is indistinguishable from the zero process meaning that there exists $B\subset \Omega$ such that $\p(B) = 1$ and
\begin{equation}
\label{indish}
\int_0^ t U_s(\omega) ds \in M_t^{n-1} \quad\text{for every }\omega\in B \text{ and } t\in\mathbb{R_+},
\end{equation}
where $M_t^ {n-1}$ is a $n$-$1$-dimensional subspace of $\mathbb{R}^ n$. We claim that this implies also degeneracy of the process $U$ itself. We, again, proceed by contradiction and assume that there exists $\omega_i \in B$, $i=1,2,\hdots,n$ such that the vectors $U_0(\omega_i)$ are linearly independent. Then the matrix
\begin{equation}
\label{eq:U_0-invert}
\left[ U_0(\omega_1), \cdots, U_0(\omega_n)\right]
\end{equation}
is invertible. On the other hand, for any $\varepsilon>0$ we can apply the mean value theorem to find $\delta>0$ such that
\begin{equation*}
\int_0^\delta U_s(\omega_i) ds =  (U_0(\omega_i) + \varepsilon_{\omega_i, \delta})\delta, \quad\text{with } \Vert\varepsilon_{\omega_i, \delta}\Vert < \varepsilon.
\end{equation*}
Thus, by continuity of the eigenvalues and invertibility of the matrix \eqref{eq:U_0-invert}, the matrix
\begin{equation*}
\left[ U_0(\omega_1) + \varepsilon_{\omega_1, \delta} \cdots U_0(\omega_n) + \varepsilon_{\omega_n, \delta}\right]
\end{equation*}
is invertible as well provided that $\varepsilon$ is chosen small enough. This contradicts \eqref{indish}, meaning that if $\rank(\Ch_t) \leq n-1$, then we have \eqref{indish} and $U_0(\omega) \in \tilde{M}_0^{n-1} \text{ for all } \omega \in B$ as well. Now stationarity of $U$ implies that $\p(U_t \in \tilde{M}_0^{n-1}) = 1$ for all $t\in\mathbb{R}$, meaning that $U$ is a degenerate process. In particular, then
\begin{equation*}
b^\top U_0 = \int_{-\infty}^ 0 b^\top e^{\bh s} dG_s  = \int_{-\infty}^ 0 \sum_{i=1}^ n \left(b^ \top e^{\bh s} \right)^{(i)} dG_s^ {(i)} = \sum_{i=1}^ n \int_{-\infty}^ 0  \left(b^ \top e^{\bh s} \right)^{(i)} dG_s^ {(i)} \overset{\text{a.s.}}{=} 0
\end{equation*}
for some non-zero vector $b$. If now $G$ has independent components, then we would also get
\begin{equation*}
\int_{-\infty}^ 0  \left(b^ \top e^{\bh s} \right)^{(i)} dG_s^ {(i)} \overset{\text{a.s.}}{=} 0\quad\text{for all } i.
\end{equation*}
For many interesting processes $G^{(i)}$ this would further imply $\left(b^ \top e^{\bh s} \right)^{(i)} \equiv 0 $ leading to a contradiction since $e^{\bh s}$ is of full-rank. In particular, this is the case whenever $G^{(i)}$ is a Gaussian process for which Wiener integral is injective (for details on Wiener integrals, see e.g. \cite{Janson-1997}). Such Gaussian processes include, among others, Brownian motions and fractional Brownian motions. Finally, we note that in general, if we have a set of observations $\{ U_t(\omega)\}_{t\in I}$ (with a fixed $\omega$) and $\mathrm{span}\{  U_t(\omega)\}_{t\in I} = \mathbb{R}^n$, then one can always find $t$ such that $\Ch_t>0$.

\subsection{Application to Gaussian processes}
\label{subsection:gaussian}
In this subsection, we illustrate the applicability of our results to the Gaussian case. That is, we suppose that the components $G$ are independent Gaussian processes $G^{(i)}$. We state the results under conditions on the cross-covariance $\gamma(t)$. In practice, one can verify the assumptions for a given model by computing $\gamma(t)$ from the variance matrix $v(t) = \e [G_t G_t^\top]$. In particular, different representations for $\gamma(t)$ are given in Subsection \ref{subsec:proof-4}. We apply these representations to prove that all our results are applicable, whenever the components $G^{(i)}$ are independent fractional Brownian motions with Hurst indices $H_i < \frac34$ (cf. Corollary \ref{kor:fbm-clt} below).

We first note that, by assumption, the components $G^{(i)}$ have continuous paths almost surely. By Gaussianity, this implies $L^2$ continuity as well, and hence \eqref{eq:var-bound} is valid, giving $G \in \mathcal{G}_{\bh}$. For the cross-covariance estimator $\hat{\gamma}$, we use standard
$$
\hat{\gamma}_{T}(\tau) = \frac{1}{T}\int_0^T U_{s+\tau} U_s^\top ds.
$$
The following result gives us the consistency immediately, and covers all ergodic systems.
\begin{prop}
\label{prop:Gaussian-consistency}
Let $G$ be a vector of Gaussian processes. If 
$
\lim_{t\to\infty}\Vert \gamma(t)\Vert = 0,
$
then 
$
\Vert \hat{\bh}_T - \hat{\bh}\Vert \overset{\p}{\longrightarrow} 0.
$
\end{prop}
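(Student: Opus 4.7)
My plan is to reduce the result to Theorem~\ref{theo:pertur}, or more precisely to the weaker sufficient conditions listed in Remark~\ref{rem:sufficient}, by showing that for the estimator
$\hat{\gamma}_T(\tau) = \tfrac{1}{T}\int_0^T U_{s+\tau}U_s^\top ds$
we have
$\int_0^t \|\hat{\gamma}_T(s)-\gamma(s)\|\,ds \overset{\p}{\to} 0$
and $\|\hat{\gamma}_T(\tau)-\gamma(\tau)\|\overset{\p}{\to}0$ for $\tau\in\{0,t\}$. Throughout, I implicitly assume that $t$ has been chosen so that $\Ch_t,\Dh_t>0$, which is justified by the discussion in Subsection~\ref{subsec:uniqueness} (independent Gaussian components with injective Wiener integral).

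First, by strict stationarity of $U$, $\e[\hat{\gamma}_T(\tau)]=\gamma(\tau)$ for every $T$ and every $\tau\geq 0$, so the problem reduces to controlling the variance of each scalar entry $\hat{\gamma}_{T,i,j}(\tau)$. Since $U$ is a measurable transform (a stochastic integral with deterministic kernel) of the centred Gaussian vector process $G$, the solution $U$ is itself Gaussian. I would therefore apply the Isserlis/Wick identity to the fourth moment $\e[U_{s+\tau}^{(i)}U_s^{(j)}U_{u+\tau}^{(i)}U_u^{(j)}]$, obtaining a sum of three products of entries of $\gamma$. Two of the three terms cancel against $\gamma_{i,j}(\tau)^2$ in the mean, and what remains is
\begin{equation*}
\mathrm{Var}(\hat{\gamma}_{T,i,j}(\tau)) = \frac{1}{T^2}\int_0^T\!\!\int_0^T \Big(\gamma_{i,i}(s-u)\gamma_{j,j}(s-u) + \gamma_{i,j}(s-u+\tau)\gamma_{j,i}(s-u-\tau)\Big)\,du\,ds.
\end{equation*}
Changing variables and using a standard bound, both double integrals are majorised by a constant times $\tfrac{1}{T}\int_0^T \|\gamma(r)\|^2\,dr$.

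Second, under the hypothesis $\|\gamma(r)\|\to 0$, the Cesàro mean $\tfrac{1}{T}\int_0^T\|\gamma(r)\|^2\,dr$ converges to zero (and is bounded by $\|\gamma(0)\|^2$ uniformly in $T$). Hence $\mathrm{Var}(\hat{\gamma}_{T,i,j}(\tau))\to 0$ uniformly in $\tau\in[0,t]$, which yields $\e\|\hat{\gamma}_T(\tau)-\gamma(\tau)\|^2\to 0$ uniformly in $\tau\in[0,t]$ together with a uniform bound. Pointwise convergence at $\tau\in\{0,t\}$ is then immediate, while for the integral condition I invoke Jensen and Fubini,
\begin{equation*}
\e\!\left(\int_0^t \|\hat{\gamma}_T(s)-\gamma(s)\|\,ds\right)^{\!2} \leq t\int_0^t \e\|\hat{\gamma}_T(s)-\gamma(s)\|^2\,ds \longrightarrow 0
\end{equation*}
by dominated convergence, giving $L^2$ (and therefore in-probability) convergence.

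Combining these two pieces with Remark~\ref{rem:sufficient} and Theorem~\ref{theo:pertur} yields the claim. The main technical obstacle is the bookkeeping in the Isserlis expansion and the uniform-in-$\tau$ control of the variance; once that is in place, the passage from covariance estimation to estimator consistency is a direct application of the already proved perturbation result.
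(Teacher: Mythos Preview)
Your proposal is correct and follows essentially the same route as the paper: compute $\e|\hat{\gamma}_{T,i,j}(\tau)-\gamma_{i,j}(\tau)|^2$ via the Gaussian fourth-moment (Isserlis) identity, use $\|\gamma(r)\|\to 0$ together with a Ces\`aro argument to get $L^2$-convergence uniformly in $\tau\in[0,t]$, and then invoke the weaker sufficient conditions of Remark~\ref{rem:sufficient}. The paper compresses the Isserlis step into ``with straightforward computations'' and states a single scalar bound, whereas you write out both pairing terms explicitly and add the Jensen/Fubini step for the integral condition; these are cosmetic differences only.
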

Proposition \ref{prop:Gaussian-consistency} guarantees that we can apply Theorem \ref{theo:pertur} if the cross-covariance $\gamma(t)$ vanishes at infinity. Similarly, we may apply Theorem \ref{theo:fclt} if $\gamma(t)$ decays rapidly enough.
\begin{theorem}
\label{theorem:Gaussian-clt}
Suppose that $\gamma(r)$ is differentiable for almost all $r$ and 
$$
\max\left(\Vert\gamma'(r)\Vert,\Vert \gamma(r)\Vert\right) \leq h(r)
$$
for some non-increasing function $h(r)$ such that, for some $K>0$, we have $h(r) \in L^1([0,K])$ and $h(r) \in L^2([K,\infty))$.
Then 
\begin{equation}
\sqrt{T}\vect (\hat{\gamma}_T(s)- \gamma(s)) \overset{\text{law}}{\longrightarrow} X_s
\end{equation}
in the uniform topology of continuous functions, where $X$ is an $n^2$-dimensional centered Gaussian process.  In particular, Theorem \ref{theo:fclt} is applicable.
\end{theorem}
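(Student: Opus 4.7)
The plan is to combine finite-dimensional convergence with tightness in $C([0,t],\re^{n^2})$. First observe that, since $G$ has independent Gaussian components and $U$ is a pathwise linear functional of $G$ via \eqref{stationarysolution}, the process $U$ is itself centred Gaussian. Writing
$$
W_T(s) \coloneqq \sqrt{T}\vect\bigl(\hat{\gamma}_T(s) - \gamma(s)\bigr) = \frac{1}{\sqrt{T}}\int_0^T \vect\bigl(U_{u+s}U_u^\top - \gamma(s)\bigr)\,du,
$$
each component of $W_T(s)$ lies in the second Wiener chaos of the underlying Gaussian field, so hypercontractivity inside that fixed chaos will reduce all $L^p$-norm estimates to $L^2$ ones.

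For finite-dimensional convergence at arbitrary $s_1,\dots,s_k \in [0,t]$, I would compute the covariance of $\bigl(W_T(s_j)\bigr)_{j\le k}$ via Isserlis' theorem, which decomposes each cross-covariance into integrals of the form
$$
\frac{1}{T}\int_0^T\!\!\int_0^T \gamma_{ab}(u-v+\sigma_1)\,\gamma_{cd}(u-v+\sigma_2)\,du\,dv
$$
with shifts $\sigma_i$ built from the $s_j$. A change of variable converts this to $\int_\re (1-|r|/T)_+\,\gamma_{ab}(r+\sigma_1)\,\gamma_{cd}(r+\sigma_2)\,dr$, which converges by dominated convergence because $\Vert\gamma\Vert^2$ is integrable on $\re$: on $[K,\infty)$ this follows from $\Vert\gamma\Vert\le h \in L^2([K,\infty))$, while on $[0,K]$ the bound $\Vert\gamma'\Vert\le h \in L^1([0,K])$ together with finiteness of $\gamma(0)$ keeps $\gamma$ bounded. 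Asymptotic normality of the finite-dimensional projections then follows from the fourth moment theorem applied inside the second Wiener chaos.

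For tightness in the uniform topology I would invoke Kolmogorov's continuity criterion. Hypercontractivity gives
$$
\e\bigl\Vert W_T(s_1) - W_T(s_2)\bigr\Vert^p \le C_p \bigl(\e\bigl\Vert W_T(s_1) - W_T(s_2)\bigr\Vert^2\bigr)^{p/2}
$$
for every $p\ge 2$, so it suffices to prove a bound $\e\Vert W_T(s_1)-W_T(s_2)\Vert^2 \le C|s_1-s_2|^{2\alpha}$ for some $\alpha>0$, uniformly in $T$; picking $p>1/\alpha$ then secures tightness via Kolmogorov--Chentsov. This bound follows from another Isserlis expansion combined with the mean-value estimate $\Vert\gamma(r+s_1)-\gamma(r+s_2)\Vert\le |s_1-s_2|\,h(\cdot)$ on the smooth region, together with the splitting $\re = [0,K]\cup[K,\infty)$ to exploit the two different integrability regimes for $h$.

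The principal obstacle is this tightness step: obtaining the uniform-in-$T$ Hölder bound on the second moment of increments, because the potentially singular behaviour of $h$ near zero and the slow $L^2$-decay at infinity must be balanced simultaneously against the $1/T$ prefactor produced by time-averaging. Once this estimate is in place, combining finite-dimensional convergence with tightness yields weak convergence of $W_T$ in $C([0,t],\re^{n^2})$ to a continuous centred Gaussian process $X$, and the applicability of Theorem \ref{theo:fclt} is then immediate.
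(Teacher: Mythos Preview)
Your proposal is correct and follows essentially the same route as the paper: the paper also invokes the Cram\'er--Wold device, obtains finite-dimensional convergence from a continuous-time Breuer--Major theorem (equivalent in this second-chaos setting to your fourth-moment argument, relying on $\int_0^\infty\Vert\gamma(s)\Vert^2\,ds<\infty$), and establishes tightness via Kolmogorov after reducing to $p=2$ by hypercontractivity. The paper's Isserlis expansion of the second-moment increment produces eight explicit terms, each handled exactly as you outline---mean-value theorem on the $\gamma$-difference, then splitting the integral at $K$---yielding the bound $\e\Vert W_T(s_1)-W_T(s_2)\Vert^2\le C|s_1-s_2|$ (so your $\alpha=\tfrac12$); your identification of this tightness step as the principal obstacle matches the paper's own emphasis.
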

\begin{rem}
The cross-covariance $\e [X_\tau X_\eta^\top]$ of the process $X$ can be computed explicitly, and it consists of elements 
\begin{equation}
\label{eq:cross-cov-X}
\int_0^\infty \gamma_{i,j}(r+\tau)\gamma_{p,q}(r+\eta)dr, \quad i,j,p,q \in \{1,2,\ldots,n\}
\end{equation}
in the order corresponding to $\vect (\hat{\gamma}_T(s)- \gamma(s))$. We also note that assumptions on the function $h$ ensures that the terms \eqref{eq:cross-cov-X} are finite. 
\end{rem}
\begin{rem}
By representation \eqref{covofU}, the differentiability of $\gamma(r)$ follows provided that the variance functions $v_i(r)$ of the components $G^{(i)}$ are differentiable. 
\end{rem}
\begin{rem}
Convergence of finite dimensional distributions in the above result follows from some well-known facts. However, to the best of our knowledge, tightness of $\hat{\gamma}_T(t)$ with lag $t$ as a free parameter, has not previously been acknowledged in the literature making it the most significant point of our example.
\end{rem}
To end this section we apply Theorem \ref{theorem:Gaussian-clt} to the case of \emph{multidimensional fractional Ornstein-Uhlenbeck processes}. Recall that a fractional Brownian motion $B^H$ with Hurst index $H\in(0,1)$ is a centered Gaussian process with covariance
$$
R_{B^H}(t,s) = \frac{1}{2}\left[t^{2H} + s^{2H} - |t-s|^{2H}\right].
$$
\begin{kor}
\label{kor:fbm-clt}
Let $G$ be a vector of independent fractional Brownian motions $B^{H_i}$ with Hurst indices $H_i<\frac34$. Then 
\begin{equation}
\sqrt{T}\vect (\hat{\gamma}_T(s)- \gamma(s)) \overset{\text{law}}{\longrightarrow} X_s
\end{equation}
in the uniform topology of continuous functions, where $X$ is an $n^2$-dimensional centered Gaussian process. In particular, Theorem \ref{theo:fclt} is applicable.
\end{kor}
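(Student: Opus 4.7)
The plan is to deduce the corollary as a direct application of Theorem \ref{theorem:Gaussian-clt}. Since the components $G^{(i)}=B^{H_i}$ are independent centered Gaussian processes with continuous paths, the only thing to verify is the existence of a non-increasing dominating function $h$ such that $\max(\|\gamma'(r)\|,\|\gamma(r)\|)\le h(r)$ with $h\in L^1([0,K])$ and $h\in L^2([K,\infty))$. Differentiability of $\gamma$ (almost everywhere, in fact on $(0,\infty)$) follows from the remark after Theorem \ref{theorem:Gaussian-clt}, because the variance functions $v_i(r)=r^{2H_i}$ are differentiable on $(0,\infty)$.

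The next step is to extract explicit asymptotics for each entry $\gamma_{i,j}(r)$ via the representation given in Subsection \ref{subsec:proof-4} (in particular Lemma \ref{lma:crosscov-rep-1} and Lemma \ref{lemma:covofU2}), which writes $\gamma(r)$ as a matrix-valued integral against $v_i$ and $v_i'$, weighted by the semigroup $e^{-\bh t}$. Since $\bh>0$, these integrals decay exponentially outside of a neighborhood of the diagonal; plugging $v_i(r)=r^{2H_i}$ and integrating by parts, a standard computation yields
\[
\|\gamma(r)\| = O\bigl(r^{2H_{\max}-2}\bigr),\qquad \|\gamma'(r)\| = O\bigl(r^{2H_{\max}-3}\bigr),\qquad r\to\infty,
\]
where $H_{\max}=\max_i H_i$. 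The assumption $H_{\max}<\tfrac34$ makes the exponent $2H_{\max}-2<-\tfrac12$, so both $\|\gamma(r)\|^2$ and $\|\gamma'(r)\|^2$ are integrable on $[K,\infty)$.

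Near the origin, $\gamma(r)$ is continuous and hence bounded, while $\gamma'(r)$ may have an integrable singularity: the same representation shows that $\|\gamma'(r)\|=O(r^{2H_{\min}-1})$ as $r\downarrow 0$ (this is the worst case, contributed by the term with smallest $H_i$), which is in $L^1$ near $0$ since $2H_{\min}-1>-1$. Combining the two regimes, I would set
\[
h(r) = C\bigl(r^{2H_{\min}-1}\wedge 1\bigr)\mathbbm{1}_{[0,K]}(r) + C\,r^{2H_{\max}-3}\mathbbm{1}_{[K,\infty)}(r),
\]
after monotonizing if needed, so that $h$ is non-increasing, $h\in L^1([0,K])$, and $h\in L^2([K,\infty))$. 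Once $h$ is in place, Theorem \ref{theorem:Gaussian-clt} applies verbatim, yielding both the functional CLT and the applicability of Theorem \ref{theo:fclt}.

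The main technical obstacle is the asymptotic analysis of the matrix-valued representation for $\gamma$ and $\gamma'$: one must carefully track how the semigroup $e^{-\bh t}$ interacts with the polynomial growth of $v_i$ to confirm that the leading-order behavior at infinity is indeed governed by $r^{2H_{\max}-2}$ (which is where the threshold $H<\tfrac34$ originates, via square-integrability). Once this asymptotic is in hand, the construction of $h$ and the verification of its integrability properties are routine.
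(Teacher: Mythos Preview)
Your approach is essentially the same as the paper's: verify the hypotheses of Theorem \ref{theorem:Gaussian-clt} by extracting asymptotics of $\gamma$ and $\gamma'$ from the representations in Subsection \ref{subsec:proof-4}, with the threshold $H_{\max}<\tfrac34$ arising from square-integrability of $r\mapsto r^{2H_{\max}-2}$ at infinity. The paper carries out the asymptotic analysis in detail, splitting the double integral in \eqref{nonidentical} at an intermediate scale $g(r)$ (with $g(r)/r\to0$ and $\log r/g(r)\to0$), Taylor-expanding the inner part and using exponential decay of $e^{\bh s}$ on the outer part; this is precisely the ``standard computation'' you allude to.

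One minor slip: your proposed $h$ on $[K,\infty)$ uses the exponent $2H_{\max}-3$, but $h$ must dominate $\max(\|\gamma\|,\|\gamma'\|)$, and $\|\gamma(r)\|$ is only $O(r^{2H_{\max}-2})$, not $O(r^{2H_{\max}-3})$. The correct choice is $h(r)=Cr^{2H_{\max}-2}$ for $r\ge K$, which is still in $L^2([K,\infty))$ exactly when $H_{\max}<\tfrac34$. (The paper in fact only claims $\|\gamma'(r)\|=O(r^{2H_{\max}-2})$, which suffices.)
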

\begin{rem}
\label{rem:tightness}
By carefully examining our proof we actually observe that the tightness holds for arbitrary values of the Hurst indices $H_i \in (0,1)$. Indeed, this follows since $\gamma(t) \sim t^{2\h_{max}-2}$ at infinity, giving us the expected rate function $l(T)$ (cf. Proposition \ref{prop:tightness}). Thus it suffices to study only the convergence of finite dimensional distributions.
\end{rem}
The above results are obviously just illustrations how our general theorems can be applied. For example, it is straightforward to check the applicability of Theorem \ref{theorem:Gaussian-clt} in the multidimensional versions of \emph{the fractional Ornstein-Uhlenbeck process of the second kind} or \emph{the bifractional Ornstein-Uhlenbeck process of the second kind} (see \cite{Sottinen-Viitasaari-2017a} and the references therein for definitions). Indeed, it can be shown that, as in the univariate case, covariances $\gamma_{ij}(t)$ decay exponentially. Similarly, in the case of multidimensional fractional Ornstein-Uhlenbeck process where some of the Hurst indices $H_i$ satisfy $H_i\geq \frac34$, we can obtain a limiting object, but with different rate and possibly different limiting object $X_s$. For example, if $\max H_i = \frac34$, then the rate is $\frac{\sqrt{T}}{\sqrt{\log T}}$ instead of standard $\sqrt{T}$, while the limiting process $X_s$ is still Gaussian. If $\max H_i>\frac34$, one expects to have Rosenblatt components in $X$. Indeed, this is a well-known fact in dimension one (see, e.g. \cite{david-general-H}), and the tightness holds on the full range $H_i \in (0,1)$ (see Remark \ref{rem:tightness}).

\section{Proofs}
\label{sec:proofs}
For the reader's convenience, we divide this section into six subsections. The first subsection, Subsection \ref{subsec:proof-1}, provides a proof of Theorem \ref{main1} motivating our model. Subsection \ref{subsec:proof-2} contains a proof of Theorem \ref{main2} that leads to the definition of our estimator $\hat{\bh}_T$. In Subsection \ref{subsec:proof-3} we prove our results, Theorem \ref{theo:pertur} and Theorem \ref{theo:fclt}, concerning the asymptotic behaviour of $\hat{\bh}_T$. In Subsection \ref{subsec:proof-4}, we provide representations for the cross-covariance $\gamma(t)$. These representations will then be applied in Subsection \ref{subsec:proof-5}, where we prove results related to our Gaussian example.
\subsection{Proof of Theorem \ref{main1}}
\label{subsec:proof-1}
The proof of Theorem \ref{main1} follows the strategy of \cite{Viitasaari-2016a}. However, in multidimensional setting one has to be carefully, e.g. whether matrices commute or not. In addition, we need to extend concepts such as self-similarity to the matrix-valued case. For this reason, we do not omit the proof even though it is partly very similar to the univariate case.

We begin with the following definition of $\bh$-self-similar processes, where $\bh$ is a matrix.
\begin{defi}
Let $\bh>0$. An $n$-dimensional stochastic process $X = (X_t)_{t \geq 0}$ with $X_0 = 0$ is $\bh$-self-similar if
\begin{equation*}
(X_{at})_{t\geq 0} \overset{law}{=} a^\bh(X_t)_{t\geq 0}
\end{equation*}
for every $a >0$ in the sense of finite dimensional distributions. Here the matrix exponent is defined through the matrix exponential function $a^\bh = e^{\bh\log a}$.
\end{defi}
The following remark illustrates the necessity of positive definiteness of $\bh$.

\begin{rem}
\label{rem:dimensionreduction}
The assumption $\bh>0$ is natural, as otherwise we may reduce the number of dimensions. Indeed, if $\bh\geq 0$ with one eigenvalue $\lambda_1=0$, then the eigendecomposition $\bh = Q\Lambda Q^\top$ gives
\begin{equation*}
X_a \overset{\text{law}}{=} Q e^{\Lambda \log a}Q^\top X_1 = Q \mathrm{diag} (e^{\lambda_i \log a})Z,
\end{equation*}
with $Z = Q^\top X_1$. Since $Q$ is orthogonal, it follows that
\begin{equation*}
\Vert X_{a}\Vert \overset{\text{law}}{=}\Vert Q \mathrm{diag} (e^{\lambda_i \log a})Z\Vert \geq |Z^{(1)}| = |(Q^\top X_1)^{(1)} |.
\end{equation*}
In particular, using continuity of $X$ and letting $a\to 0$ yields $(Q^\top X_1)^{(1)} = 0$. This means that $X$ is an $(n-1)$-dimensional process. Similarly, if $G\in\mathcal{G}_\bh$ with $\bh$ having zero as an eigenvalue with algebraic multiplicity equal $k$, then $G$ degenerates to an $(n-k)$-dimensional process.
\end{rem}
\begin{defi}
Let $\bh > 0$. In addition, let $U = (U_t)_{t\in\mathbb{R}}$ and $X = (X_t)_{t \geq 0}$ be $n$-dimensional stochastic processes. We define
\begin{align*}
(\mathcal{L}_\bh U)_t &= t^\bh U_{\log t}, \quad \text{for } t>0\\
(\mathcal{L}^{-1}_\bh X)_t &= e^{-\bh t} X_{e^t}, \quad \text{for } t\in\mathbb{R}.
\end{align*}
\end{defi}
The following result extends the one-to-one correspondence between $\bh$-self-similar processes and stationary processes to the matrix-valued case. We use the name Lamperti transform for our matrix-valued version in honour to the original univariate result.
\begin{theorem}[Lamperti]
\label{lamperti}
Let $\bh >0$. Let $(U_t)_{t\in\mathbb{R}}$ be an $n$-dimensional stationary process. Then $( \mathcal{L}_\bh U)_t$ is $\bh$-self-similar. Conversely, let $(X_t)_{t\geq 0}$ be an $n$-dimensional $\bh$-self-similar process. Then $(\mathcal{L}^{-1}_\bh X)_t$ is stationary.
\begin{proof}
Suppose first that $(U_t)_{t\in\mathbb{R}}$ is stationary. Define $Y_t = ( \mathcal{L}_\bh U)_t = t^\bh U_{\log t}$. Let $a >0$ and $[t_1, t_2, \cdots, t_m]^\top \in \mathbb{R}_+^m$. Then

\begin{equation*}
\begin{split}
(Y_{at_1}, Y_{at_2}, \cdots, Y_{at_m}) &= (a^\bh t_1^\bh U_{\log at_1}, a^\bh t_2^\bh U_{\log at_2}, \cdots, a^\bh t_m^\bh U_{\log at_m})\\
&= (a^\bh t_1^\bh U_{\log a + \log t_1}, a^\bh t_2^\bh U_{\log a+ \log t_2}, \cdots, a^\bh t_m^\bh U_{\log a + \log t_m})\\
&\overset{law}{=} (a^\bh Y_{t_1}, a^\bh Y_{t_2},\cdots, a^\bh Y_{t_m}).
\end{split}
\end{equation*}
Now let $\bh = Q\Lambda Q^\top$ be an eigendecomposition of $\bh$. Then

\begin{equation}
\label{matrixexponent}
t^\bh = e^{\bh \log t} = Q \sum_{k=0}^ \infty \frac{\Lambda^ k (\log t)^ k}{k!} Q^\top= Q e^{\Lambda \log t} Q^\top,
\end{equation}
where $e^{\Lambda \log t}$ is a diagonal matrix with diagonal elements of the form $e^{\lambda_i \log t}$. Since $\lambda_i > 0$ for every $i=1,2,...,n$, we notice that $\lim_{t\to 0} Y_t = 0$ in probability, and hence, $Y$ is $\bh$-self-similar.\\
Next, suppose that $(X_t)_{t \geq 0}$ is $\bh$-self-similar. Define $Y_t = (\mathcal{L}^{-1}_\bh X)_t = e^{-\bh t} X_{e^t}$. Let $s\in\mathbb{R}$ and $[t_1, t_2, \cdots, t_m]^\top \in \mathbb{R}^m$. Then

\begin{equation*}
\begin{split}
(Y_{t_1+s}, Y_{t_2+s}, \cdots, Y_{t_m+s}) &= (e^{-\bh(t_1+s)} X_{e^{t_1+s}}, e^{-\bh(t_2+s)} X_{e^{t_2+s}}, \cdots, e^{-\bh(t_m+s)} X_{e^{t_m+s}})\\
&\overset{law}{=}  (e^{-\bh t_1} X_{e^{t_1}}, e^{-\bh t_2} X_{e^{t_2}}, \cdots, e^{-\bh t_m} X_{e^{t_m}})\\
&= (Y_{t_1}, Y_{t_2},\cdots, Y_{t_m})
\end{split}
\end{equation*}
concluding the proof.
\end{proof}
\end{theorem}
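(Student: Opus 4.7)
The argument parallels the classical (scalar) Lamperti correspondence and reduces to short finite-dimensional distribution computations. The key observation that keeps non-commutativity from interfering is that every matrix exponential appearing is a function of $\bh$ alone, so all such matrices commute and obey the usual scalar exponent rules; in particular $(at)^\bh = e^{\bh(\log a+\log t)} = a^\bh t^\bh$ and $e^{-\bh s}e^{\bh s}=I$. These two identities are essentially all that is needed.

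For the forward direction (stationary $\Rightarrow$ $\bh$-self-similar), I would set $Y_t = t^\bh U_{\log t}$ for $t>0$ and fix $a>0$ together with $t_1,\dots,t_m>0$. Then $Y_{at_i} = a^\bh t_i^\bh U_{\log a + \log t_i}$, and stationarity of $U$ under the time-shift $\log a$ gives $(U_{\log a+\log t_i})_{i=1}^m \overset{law}{=}(U_{\log t_i})_{i=1}^m$. Applying the deterministic coordinatewise linear maps $a^\bh t_i^\bh$ preserves this distributional identity and yields $(Y_{at_i})_i \overset{law}{=} (a^\bh Y_{t_i})_i$, which is the required self-similarity. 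It remains to check $Y_0=0$: using the eigendecomposition $\bh=Q\Lambda Q^\top$ with eigenvalues $\lambda_i>0$, one has $\|t^\bh\|\to 0$ as $t\downarrow 0$, while stationarity of $U$ makes the family $\{U_{\log t}\}_t$ identically distributed and therefore tight, so a standard splitting argument gives $Y_t\to 0$ in probability.

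For the converse, set $Y_t = e^{-\bh t}X_{e^t}$ and fix $s\in\mathbb{R}$ together with $t_1,\dots,t_m\in\mathbb{R}$. Writing $Y_{t_i+s} = e^{-\bh t_i}e^{-\bh s}X_{e^s e^{t_i}}$ and invoking $\bh$-self-similarity of $X$ with scaling parameter $a=e^s$ yields $(X_{e^s e^{t_i}})_i \overset{law}{=}(e^{\bh s}X_{e^{t_i}})_i$. Multiplying coordinatewise by the deterministic matrix $e^{-\bh t_i}e^{-\bh s}$ and using $e^{-\bh s}e^{\bh s}=I$ collapses the right-hand side to $(e^{-\bh t_i}X_{e^{t_i}})_i=(Y_{t_i})_i$, which is the stationarity condition. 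The only genuine subtlety I foresee is the boundary behaviour at $t=0$ in the forward direction; this is precisely where positivity $\bh>0$ is honestly used, and, in line with Remark \ref{rem:dimensionreduction}, weakening the hypothesis would leave components of $U$ surviving the scaling $t^\bh$ and destroy self-similarity.
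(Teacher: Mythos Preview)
Your proposal is correct and follows essentially the same route as the paper's proof: both directions are handled by the same finite-dimensional distribution computations, exploiting that all matrix exponentials involved are functions of $\bh$ and hence commute, and the boundary check $Y_t\to 0$ is done via the eigendecomposition of $\bh$ together with positivity of the eigenvalues. Your write-up is in fact slightly more explicit than the paper's (you spell out the tightness argument for $Y_t\to 0$ and the identity $(at)^\bh=a^\bh t^\bh$), but there is no substantive difference in strategy.
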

The following lemma is a straightforward extension of a similar univariate result of \cite{Viitasaari-2016a}. For the reader's convenience, we present the proof here.
\begin{lemma}
\label{similartoG}
Let $(X_t)_{t\geq 0}$ be an $n$-dimensional $\bh$-self-similar process. Define $Y= (Y_t)_{t\in\mathbb{R}}$ by
\begin{equation*}
Y_t = \int_0^t e^{-\bh u} dX_{e^u}.
\end{equation*}
Then $Y\in\mathcal{G}_\bh$.
\begin{proof}
Clearly $Y_0 = 0$. In addition

\begin{equation*}
\int_{-\infty}^0 e^{\bh u} dY_u = \int_{-\infty}^0 dX_{e^u} = X_1 - \lim_{t\to -\infty} X_{e^t}  \overset{\p}{=} X_1.
\end{equation*}
Now, let $t,s,h \in \mathbb{R}$. Then
\begin{equation*}
\begin{split}
Y_t - Y_s &= \int_s^t e^{-\bh u} dX_{e^u} = \int_{s+h}^{t+h} e^{-\bh(v-h)} dX_{e^{v-h}}\\
&\overset{law}{=} \int_{s+h}^{t+h} e^{-\bh v} dX_{e^v} = Y_{t+h} - Y_{s+h},
\end{split}
\end{equation*}
where we have used the change of variable $u = v-h$. The penultimate equation can be verified by approximating the integral with Riemann sums, using self-similarity, and passing to the limit. Similarly, for multidimensional distributions
\begin{equation*}
\begin{split}
\begin{bmatrix}
Y_{t_1}-Y_{s_1}\\
Y_{t_2}-Y_{s_2}\\
\vdots\\
Y_{t_m}-Y_{s_m}
\end{bmatrix}
&=
\begin{bmatrix}
 \int_{s_1}^{t_1} e^{-\bh u} dX_{e^u}\\
 \int_{s_2}^{t_2} e^{-\bh u} dX_{e^u}\\
\vdots\\
 \int_{s_m}^{t_m} e^{-\bh u} dX_{e^u}
\end{bmatrix}
=
\begin{bmatrix}
 \int_{s_1+h}^{t_1+h} e^{-\bh(v-h)} dX_{e^{v-h}}\\
 \int_{s_2+h}^{t_2+h} e^{-\bh(v-h)}  dX_{e^{v-h}}\\
\vdots\\
 \int_{s_m+h}^{t_m+h} e^{-\bh(v-h)}  dX_{e^{v-h}}
\end{bmatrix}\\
&\overset{\text{law}}{=}
\begin{bmatrix}
 \int_{s_1+h}^{t_1+h} e^{-\bh v} dX_{e^v}\\
 \int_{s_2+h}^{t_2+h} e^{-\bh v} dX_{e^v}\\
\vdots\\
 \int_{s_m+h}^{t_m+h} e^{-\bh v} dX_{e^v}
\end{bmatrix}
=
\begin{bmatrix}
Y_{t_1+h}-Y_{s_1+h}\\
Y_{t_2+h}-Y_{s_2+h}\\
\vdots\\
Y_{t_m+h}-Y_{s_m+h}
\end{bmatrix}.
\end{split}
\end{equation*}
\end{proof}
\end{lemma}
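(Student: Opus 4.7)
The plan is to verify the three defining requirements of $\mathcal{G}_\bh$ (stationary increments, $Y_0=0$, and almost sure existence of the backward integral $\lim_{u\to\infty}\int_{-u}^0 e^{\bh s}\,dY_s$) by exploiting how the change of variables $u\mapsto e^u$ converts the $\bh$-self-similarity of $X$ into the stationary structure of $Y$. The identity $Y_0=0$ is immediate from the integral on $[0,0]$, so the interesting content is (i) computing the backward integral and (ii) showing stationarity of increments.

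For the backward integral, I would first observe that, at the pathwise Riemann–Stieltjes level, the integrator $e^{\bh s}\,dY_s$ telescopes with the $e^{-\bh u}$ factor inside the definition of $Y$: namely $Y_t - Y_s = \int_s^t e^{-\bh u}\,dX_{e^u}$, so formally $e^{\bh u}\,dY_u = dX_{e^u}$. This can be justified via the integration-by-parts definition already given in the excerpt. Hence
\begin{equation*}
\int_{-u}^0 e^{\bh s}\,dY_s = \int_{-u}^0 dX_{e^s} = X_1 - X_{e^{-u}},
\end{equation*}
which converges almost surely to $X_1 - X_0 = X_1$ as $u\to\infty$, using continuity of $X$ at $0$ and $X_0=0$ from the definition of $\bh$-self-similarity. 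This gives an explicit and finite limit.

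For stationarity of increments, the plan is to substitute $u=v+h$ in $Y_{t+h}-Y_{s+h}=\int_{s+h}^{t+h} e^{-\bh u}\,dX_{e^u}$, producing
\begin{equation*}
Y_{t+h}-Y_{s+h} = e^{-\bh h}\int_s^t e^{-\bh v}\,dX_{e^h\,e^v}.
\end{equation*}
Because $e^{-\bh v}$ and $e^{-\bh h}$ are both analytic functions of $\bh$ and therefore commute, I can then invoke $\bh$-self-similarity with $a=e^h$: the family $(X_{e^h e^v})_v$ has the same finite-dimensional distributions as $(e^{\bh h}X_{e^v})_v$, so the integral above equals in law $e^{\bh h}(Y_t-Y_s)$, and the prefactor $e^{-\bh h}$ cancels. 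The same argument run on a joint vector of increments $(Y_{t_j+h}-Y_{s_j+h})_{j=1}^m$ yields equality in law with $(Y_{t_j}-Y_{s_j})_{j=1}^m$, which is stationarity of increments.

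The main obstacle, and the only step that is not bookkeeping, is the passage from the self-similarity of $X$ (stated in terms of finite-dimensional distributions) to an equality in law of stochastic integrals of $X$ against a deterministic matrix-valued integrand. I would handle this by approximating each integral by Riemann–Stieltjes sums adapted to a common partition of $[s,t]$, applying self-similarity to the jointly distributed increments appearing in the sums (using that multiplication by the deterministic matrices $e^{-\bh v_k}$ and $e^{\bh h}$ is a continuous, measurable operation that commutes with taking laws), and then passing to the limit using the pathwise continuity guaranteed for $X$ and the integration-by-parts representation. Once this technical passage is granted, the three requirements for membership in $\mathcal{G}_\bh$ follow as sketched above.
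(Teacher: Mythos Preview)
Your proposal is correct and follows essentially the same route as the paper: the paper also verifies $Y_0=0$, computes $\int_{-\infty}^0 e^{\bh u}\,dY_u = X_1 - \lim_{t\to-\infty}X_{e^t}$, and obtains stationarity of increments by the same change of variable combined with self-similarity, explicitly noting (as you do) that the passage to equality in law for the integrals is justified via Riemann-sum approximation. The only cosmetic difference is the direction of the substitution (the paper writes $u=v-h$ and goes from $Y_t-Y_s$ to $Y_{t+h}-Y_{s+h}$, whereas you go the other way), and you spell out the commutation of $e^{-\bh h}$ with $e^{-\bh v}$ a bit more explicitly than the paper does.
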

We split the proof of Theorem \ref{main1} into three lemmas. The first one gives us the stationary solution to \eqref{langevin}.
\begin{lemma}
\label{lemma:first}
Let $\bh>0$ and $G\in\mathcal{G}_\bh$. Then the unique solution to the Langevin equation \eqref{langevin} with the initial condition
\begin{equation*}
U_0 = \int_{-\infty}^0 e^{\bh s} dG_s
\end{equation*}
is given by
\begin{equation*}
U_t = e^{-\bh t} \int_{-\infty}^t e^{\bh s} dG_s.
\end{equation*}
The solution is stationary.

\begin{proof}
By integration by parts
\begin{equation*}
U_t = e^{-\bh t} \int_{-\infty}^t e^{\bh s} dG_s = G_t -e^{-\bh t}\bh \int_{-\infty}^t e^{\bh s} G_s ds,
\end{equation*}
giving
\begin{equation*}
\begin{split}
dU_t &= dG_t - d\left(e^{-\bh t} \bh\right)  \int_{-\infty}^t e^{\bh s} G_s ds - e^{-\bh t}\bh d\left( \int_{-\infty}^t e^{\bh s} G_s ds\right)\\
&=dG_t - \bh d\left(e^{-\bh t}\right)  \int_{-\infty}^t e^{\bh s} G_s ds - \bh e^{-\bh t} e^{\bh t} G_t dt.
\end{split}
\end{equation*}
Here we have used the fact that $e^{-\bh t}$ and $\bh$ commute. Now

\begin{equation*}
\begin{split}
d U_t&= dG_t - \bh d\left(e^{-\bh t}\right)  \int_{-\infty}^t e^{\bh s} G_s ds - \bh  G_t dt\\
&= dG_t + \left(\bh^2e^{-\bh t} dt\right)   \int_{-\infty}^t e^{\bh s} G_s ds - \bh  G_t dt\\
&= dG_t - \bh\left(G_t - \bh e^{-\bh t}\int_{-\infty}^t e^{\bh s} G_s ds\right) dt \\
&= dG_t - \bh U_t dt
\end{split}
\end{equation*}
completing the proof of the first assertion. To show stationarity, change of variable $u = s-t$ gives us

\begin{equation*}
\begin{split}
U_t &= e^{-\bh t} \int_{-\infty}^t e^{\bh s} dG_s = e^{-\bh t} \int_{-\infty}^0 e^{\bh(u+t)} dG_{u+t}\\
&= \int_{-\infty}^0 e^{\bh u} dG_{u+t} \overset{\text{law}}{=} \int_{-\infty}^0 e^{\bh u} dG_u = U_0,
\end{split}
\end{equation*}
where we have used that $G$ has stationary increments. Again, the penultimate equation can be verified by approximating the integral with finite Riemann sums, using stationarity of increments, and passing to the limit. Treating multidimensional distributions similarly concludes the proof.


\end{proof}
\end{lemma}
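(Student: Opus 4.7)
The plan is to verify directly that the proposed process $U_t = e^{-\bh t}\int_{-\infty}^t e^{\bh s}\,dG_s$ satisfies the Langevin equation \eqref{langevin} with the given initial value, then to establish uniqueness by a linear-ODE argument, and finally to deduce stationarity from the stationary increments of $G$ via a change of variable. The three steps are logically separate, so I would treat them in that order.

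For the solution step, I would apply the pathwise integration-by-parts formula \eqref{indefinite}, available because $G$ has continuous paths and belongs to $\mathcal{G}_\bh$, to rewrite
$$U_t = G_t - \bh\,e^{-\bh t}\int_{-\infty}^t e^{\bh s}G_s\,ds.$$
Differentiating this uses the product rule together with the key observation that $\bh$ commutes with $e^{-\bh t}$, since both are polynomials (or power series) in the single matrix $\bh$. The derivative of $e^{-\bh t}$ contributes $-\bh e^{-\bh t}$, while differentiating the inner integral produces an extra $e^{\bh t}G_t$ factor. Collecting terms, the contribution $-\bh e^{-\bh t}\cdot e^{\bh t} G_t\,dt = -\bh G_t\,dt$ combines with $+\bh^2 e^{-\bh t}\int_{-\infty}^t e^{\bh s}G_s\,ds\,dt$ to reconstitute $-\bh U_t\,dt$, yielding $dU_t = -\bh U_t\,dt + dG_t$ as required. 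For uniqueness, if $V$ is another strong solution with $V_0 = U_0$, then $W_t = V_t - U_t$ solves the deterministic pathwise ODE $dW_t = -\bh W_t\,dt$ with $W_0 = 0$, so $W_t = e^{-\bh t}W_0 \equiv 0$.

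For stationarity, I would perform the substitution $u = s - t$ inside $U_t$, giving $U_t = \int_{-\infty}^0 e^{\bh u}\,dG_{u+t}$, and then use the stationarity of the increments of $G$ to conclude that $U_t \overset{\text{law}}{=} U_0$. The same substitution applied simultaneously to times $t_1,\ldots,t_m$ handles the finite-dimensional distributions. The main obstacle I expect is precisely in this last step: since the integrals are pathwise Riemann--Stieltjes objects defined through a limiting procedure as the lower endpoint tends to $-\infty$, one has to justify that the distributional equality passes through the limit. The natural route is to approximate the integral by Riemann sums, invoke stationarity of increments of $G$ at the level of these finite sums (where the claim is transparent), and then use almost sure convergence of the Riemann sums (guaranteed by $G\in\mathcal{G}_\bh$) to pass to the limit in distribution. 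Once this limiting step is made rigorous, the rest of the proof is routine algebra.
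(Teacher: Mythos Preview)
Your proposal is correct and follows essentially the same approach as the paper: integration by parts followed by the product-rule differentiation (using that $\bh$ and $e^{-\bh t}$ commute) for the solution step, and the change of variable $u=s-t$ together with Riemann-sum approximation and stationarity of increments for the stationarity step. Your explicit uniqueness argument via the homogeneous ODE $dW_t=-\bh W_t\,dt$ with $W_0=0$ is a welcome addition, since the paper's proof of this lemma verifies that the formula gives a stationary solution but does not spell out uniqueness.
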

The next result gives us the other direction, i.e. it shows that stationary processes solve Langevin equation.
\begin{lemma}
\label{lemma:second}
Let $\bh>0$ be fixed and let $U = (U_t)_{t\in\mathbb{R}}$ be stationary process with continuous paths. Then $U$ is the unique solution to the Langevin equation \eqref{langevin} for some $G\in\mathcal{G}_H$ and the initial condition
\begin{equation*}
U_0 = \int_{-\infty}^0 e^{\bh s} dG_s.
\end{equation*}
\begin{proof}
Assume that $(U_t)_{t\in\mathbb{R}}$ is stationary. Then by Theorem \ref{lamperti} there exists a $\bh$-self-similar $(X_t)_{t \geq 0}$ such that $U_t = (\mathcal{L}^{-1}_\bh X)_t = e^{-\bh t} X_{e^t}$. Consequently
\begin{equation*}
\begin{split}
d U_t = d(e^{-\bh t}) X_{e^t} + e^{-\bh t} dX_{e^t} &= -\bh e^{-\bh t} X_{e^t} dt + e^{-\bh t} d X_{e^t}\\
&= -\bh U_t dt + e^{-\bh t} d X_{e^t}.
\end{split}
\end{equation*}
Now, define $Y = (Y_t)_{t\in\mathbb{R}}$ as in Lemma \ref{similartoG}. Then $Y\in\mathcal{G}_\bh$ and $dY_t = e^{-\bh t} dX_{e^t}$ concluding the proof.
\end{proof}
\end{lemma}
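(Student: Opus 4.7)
The plan is to exploit the Lamperti-type correspondence of Theorem \ref{lamperti} to convert a stationary $U$ into a $\bh$-self-similar process, then read off the driver $G$ by formal differentiation. Since $U$ is stationary, Theorem \ref{lamperti} yields a $\bh$-self-similar process $X = (X_t)_{t\geq 0}$ with $X_0 = 0$ such that $U_t = e^{-\bh t}X_{e^t}$ for every $t\in\mathbb{R}$. This representation is the key structural identity I want to differentiate.

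First I would apply the pathwise Riemann--Stieltjes product rule to $U_t = e^{-\bh t}X_{e^t}$. Since $\bh$ commutes with $e^{-\bh t}$ (both being analytic functions of $\bh$), one has $d(e^{-\bh t}) = -\bh e^{-\bh t}\,dt$, and therefore
\begin{equation*}
dU_t = -\bh e^{-\bh t} X_{e^t}\,dt + e^{-\bh t}\,dX_{e^t} = -\bh U_t\,dt + e^{-\bh t}\,dX_{e^t}.
\end{equation*}
This identifies the natural candidate for the driver: define $G_t := \int_0^t e^{-\bh u}\,dX_{e^u}$, which is precisely the process constructed in Lemma \ref{similartoG}. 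That lemma directly gives $G \in \mathcal{G}_\bh$ with $G_0 = 0$ and stationary increments, and by construction $dG_t = e^{-\bh t}\,dX_{e^t}$. Hence $U$ solves the Langevin equation \eqref{langevin} with driver $G$.

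For the initial condition, I would substitute the definition of $G$ into $\int_{-\infty}^0 e^{\bh s}\,dG_s$ to get the telescoping integral $\int_{-\infty}^0 dX_{e^s} = X_1 - \lim_{t\to-\infty} X_{e^t}$. The $\bh$-self-similarity of $X$ with $\bh > 0$ forces $X_{e^t} \to 0$ in probability as $t\to -\infty$ (compare Remark \ref{rem:dimensionreduction}), giving $\int_{-\infty}^0 e^{\bh s}\,dG_s = X_1 = U_0$. Uniqueness of the solution is then inherited from Lemma \ref{lemma:first}: once $G$ and $U_0$ are fixed, the stationary solution is the one given by the explicit formula \eqref{stationarysolution}, and comparing with $U_t = e^{-\bh t}X_{e^t}$ confirms consistency.

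The main obstacle is the rigorous justification of the two formal manipulations at the level of pathwise Riemann--Stieltjes integrals: the product rule used to extract $dG_t = e^{-\bh t}dX_{e^t}$, and the telescoping evaluation of the improper integral $\int_{-\infty}^0 e^{\bh s}\,dG_s$. In the multidimensional setting I would handle both by componentwise Riemann-sum approximation, using commutativity of $\bh$ with $e^{-\bh t}$ and the almost-sure continuity of $X$ (inherited from that of $U$) to control the error in the product rule, and invoking the self-similarity of $X$ to pass to the $-\infty$ limit.
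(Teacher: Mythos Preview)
Your proposal is correct and follows essentially the same route as the paper: invoke the Lamperti correspondence (Theorem \ref{lamperti}) to write $U_t = e^{-\bh t}X_{e^t}$, differentiate via the product rule, and identify the driver as the process $G_t = \int_0^t e^{-\bh u}\,dX_{e^u}$ of Lemma \ref{similartoG}. Your explicit verification of the initial condition $U_0 = X_1 = \int_{-\infty}^0 e^{\bh s}\,dG_s$ and your remark on uniqueness via Lemma \ref{lemma:first} are consistent with (and slightly more detailed than) the paper's argument, which leaves these implicit in the cited lemmas.
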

Finally, the next lemma provides us with the uniqueness of the noise.

\begin{lemma}
\label{lemma:third}
Let $\bh > 0$ be fixed. Then a process $U = (U_t)_{t\in\mathbb{R}}$ satisfies the Langevin equation \eqref{langevin} with the initial
\begin{equation}
\label{initial}
U_0 = \int_{-\infty}^0 e^{\bh s} dG_s
\end{equation}
for one process $G \in\mathcal{G}_\bh$ at the most.
\begin{proof}
Suppose that $G, \tilde{G} \in\mathcal{G}_\bh$ yield the same solution $U$ of the Langevin equation with the initial \eqref{initial}. Then for every $t\in\mathbb{R}$

\begin{equation*}
e^{\bh t}U_t =  \int_{-\infty}^t e^{\bh u} dG_u = \int_{-\infty}^t e^{\bh u} d\tilde{G}_u.
\end{equation*}
Let $s<t$, then

\begin{equation*}
\int_s^t e^{\bh u} dG_u = \int_s^t e^{\bh u} d\tilde{G}_u.
\end{equation*}
Integration by parts gives

\begin{equation*}
\int_s^t e^{\bh u} dG_u = e^{\bh t} G_t - e^{\bh s} G_s - \bh \int_s^t e^{\bh u} G_u du
\end{equation*}
yielding

\begin{equation*}
e^{\bh t} (G_t - \tilde{G}_t) - e^{\bh s} (G_s - \tilde{G}_s) = \bh \int_s^t e^{\bh u} (G_u - \tilde{G}_u) du.
\end{equation*}
By denoting $h(t) = e^{\bh t}(G_t - \tilde{G}_t)$, we obtain

\begin{equation*}
h(t) - h(s) = \bh \int_s^t h(u) du
\end{equation*}
or equivalently

\begin{equation}
\label{differential}
dh(t) = \bh h(t) dt.
\end{equation}
The general solution to \eqref{differential} reads
\begin{equation*}
h(t) = Q e^{\Lambda t} C,
\end{equation*}
where the columns of $Q$ are equal to the eigenvectors of $\bh$ and $\Lambda$ is the corresponding eigenvalue diagonal matrix, and $C$ is a constant vector. The initial $G_0 = \tilde{G}_0$ gives $QC = 0$. Since $Q$ is invertible, we conclude that $C= 0$.
\end{proof}
\end{lemma}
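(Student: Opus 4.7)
The goal is to show that if two processes $G, \tilde{G} \in \mathcal{G}_\bh$ both produce the same solution $U$ of the Langevin equation with initial condition \eqref{initial}, then $G \equiv \tilde{G}$. The strategy is to extract from $U$ a functional that identifies $G$ uniquely, turn the equality of the two functionals into a linear differential equation for the difference $h(t) := e^{\bh t}(G_t - \tilde{G}_t)$, and then invoke uniqueness of solutions to a linear ODE with vanishing initial data.

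The first step is to rewrite the Langevin equation so that $G$ appears on the left. By Lemma \ref{lemma:first} the unique strong solution with the prescribed initial data \eqref{initial} admits the representation $U_t = e^{-\bh t}\int_{-\infty}^t e^{\bh u}\, dG_u$, so multiplying by $e^{\bh t}$ gives $e^{\bh t}U_t = \int_{-\infty}^t e^{\bh u}\, dG_u$, and the same identity holds with $G$ replaced by $\tilde{G}$. Subtracting these two identities and then restricting to a finite window $[s,t]$ yields
\begin{equation*}
\int_s^t e^{\bh u}\, d(G_u - \tilde{G}_u) = 0, \qquad s < t.
\end{equation*}

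The second step is to turn this pathwise Riemann--Stieltjes identity into an ODE via integration by parts, as described in the preamble to Definition \ref{defi:GH}. Doing so produces
\begin{equation*}
e^{\bh t}(G_t - \tilde{G}_t) - e^{\bh s}(G_s - \tilde{G}_s) = \bh \int_s^t e^{\bh u}(G_u - \tilde{G}_u)\, du,
\end{equation*}
i.e.\ $h(t) - h(s) = \bh\int_s^t h(u)\, du$ for the vector-valued function $h$. This is equivalent to the matrix-valued linear ODE $h'(t) = \bh\, h(t)$ (interpreted pathwise, and making sense because $G$ and $\tilde{G}$ have continuous paths).

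The final step is to solve this ODE. The general solution is $h(t) = e^{\bh t}c$ for some (random) constant vector $c$, and the definition of $\mathcal{G}_\bh$ forces $G_0 = \tilde{G}_0 = 0$, hence $h(0) = 0$; since $e^{\bh\cdot 0} = I$ is invertible, this yields $c = 0$ and therefore $h \equiv 0$, i.e.\ $G_t = \tilde{G}_t$ for every $t \in \mathbb{R}$. The main conceptual subtlety is Step~1: one must be careful to use the exact initial condition \eqref{initial} rather than merely the differential form of the Langevin equation, because the differential equation alone only determines $G$ up to its values at time zero -- the integrated representation of $U$ supplied by Lemma \ref{lemma:first} is what pins down the constant and makes the ODE argument close.
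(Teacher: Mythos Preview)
Your proof is correct and follows essentially the same route as the paper: both arguments pass from the integrated representation $e^{\bh t}U_t=\int_{-\infty}^t e^{\bh u}\,dG_u$ to the integral identity $h(t)-h(s)=\bh\int_s^t h(u)\,du$ for $h(t)=e^{\bh t}(G_t-\tilde G_t)$, solve the resulting linear ODE, and use $G_0=\tilde G_0=0$ to kill the constant. The only cosmetic difference is that the paper writes the general solution as $Qe^{\Lambda t}C$ via the eigendecomposition of $\bh$, whereas you write it directly as $e^{\bh t}c$; these are equivalent.
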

The proof of Theorem \ref{main1} now follows directly.
\begin{proof}[Proof of Theorem \ref{main1}]
The existence of a unique stationary solution to the Langevin equation is in fact the statement of Lemma \ref{lemma:first}. Conversely, the fact that stationary processes solve Langevin equations is the statement of Lemma \ref{lemma:second}. Finally, Lemma \ref{lemma:third} gives us the uniqueness of the noise.
\end{proof}

\subsection{Proof of Theorem \ref{main2}}
\label{subsec:proof-2}
In order to prove Theorem \ref{main2}, we begin by showing that for $\bh>0$, the \eqref{eq:var-bound} implies $G\in\mathcal{G}_\bh$, i.e. we show that for $\bh>0$,
\begin{equation*}
\int_{-\infty}^0 e^{\bh s} G_s ds
\end{equation*}
defines an almost surely finite random variable. For this we begin with the following very elementary lemma.

\begin{lemma}
\label{lemma:boundforvariance}
Let $G = (G_s)_{s\in\mathbb{R}}$ be a $1$-dimensional centred process with stationary increments, $G_0 = 0$, and $\sup_{s\in[0,1]}\e G_s^2 < \infty$. Then
\begin{equation*}
\mathrm{var}(G_s) \leq C(s+1)^2
\end{equation*}
for every $s\geq 0$, where $C$ is some positive constant depending only on the process $G$.
\begin{proof}
Let $s\geq 0$. Writing
$
G_s = G_s - G_{\lfloor s \rfloor} + \sum_{k=1}^{\lfloor s \rfloor} \left(G_k - G_{k-1}\right),
$
where $\lfloor \cdot \rfloor$ is the standard floor-function, and using stationarity of the increments together with the Minkowski's inequality gives
$\sqrt{\e G_s^ 2} \leq (s+1) \sup_{s\in [0,1]} \sqrt{\mathrm{var} (G_s)}
$.
\end{proof}
\end{lemma}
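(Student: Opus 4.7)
The plan is to decompose $G_s$ into unit-length increments, use stationarity to control each increment by the same bounded quantity, and then apply Minkowski's inequality (the triangle inequality in $L^2$) to sum up the contributions. Since $G$ is centred, $\mathrm{var}(G_s)=\e G_s^2$, so it suffices to bound the second moment.

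First I would fix $s\geq 0$ and let $n=\lfloor s\rfloor$, so $s-n\in[0,1)$. The telescoping identity
\begin{equation*}
G_s=(G_s-G_n)+\sum_{k=1}^{n}(G_k-G_{k-1})
\end{equation*}
writes $G_s$ as a sum of $n+1$ increments, each spanning an interval of length at most $1$. By stationarity of the increments of $G$, each $G_k-G_{k-1}$ has the same distribution as $G_1-G_0=G_1$, and $G_s-G_n$ has the same distribution as $G_{s-n}$. Setting $M\coloneqq \sup_{u\in[0,1]}\e G_u^2$, which is finite by hypothesis, both $\e(G_k-G_{k-1})^2$ and $\e(G_s-G_n)^2$ are bounded by $M$.

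Next I would apply Minkowski's inequality in $L^2(\p)$ to the decomposition, obtaining
\begin{equation*}
\sqrt{\e G_s^2}\le \sqrt{\e(G_s-G_n)^2}+\sum_{k=1}^{n}\sqrt{\e(G_k-G_{k-1})^2}\le (n+1)\sqrt{M}\le (s+1)\sqrt{M}.
\end{equation*}
Squaring gives $\mathrm{var}(G_s)=\e G_s^2\le M(s+1)^2$, which is the claimed inequality with $C=M=\sup_{u\in[0,1]}\e G_u^2$, a constant depending only on the process $G$.

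I do not foresee any real obstacle: the only subtlety is to notice that $\sup_{u\in[0,1]}\e G_u^2$ covers the "fractional'' increment $G_s-G_n$ as well as the integer increments, which follows because stationarity of the increments implies $G_s-G_n\stackrel{\text{law}}{=}G_{s-n}$ with $s-n\in[0,1]$. Everything else is routine.
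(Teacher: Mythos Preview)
Your proof is correct and follows essentially the same approach as the paper: the same telescoping decomposition into $\lfloor s\rfloor+1$ unit-length increments, the same appeal to stationarity of increments to bound each piece by $\sup_{u\in[0,1]}\e G_u^2$, and the same application of Minkowski's inequality. You have simply spelled out a couple of details (e.g.\ that $G_s-G_{\lfloor s\rfloor}\overset{\text{law}}{=}G_{s-\lfloor s\rfloor}$) that the paper leaves implicit.
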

\begin{lemma}
\label{lemma:existenceofintegral}
Let $\bh>0$ and let $G$ satisfy \eqref{eq:var-bound}. Then

\begin{equation}
\label{limit?}
\lim_{u\to\infty} \int_{-u}^0 e^{\bh s} G_s ds
\end{equation}
exists almost surely.

\begin{proof}
Let $\bh = Q \Lambda Q^\top$ be an eigendecomposition of $\bh$. Then $e^ {\bh s} = Qe^{\Lambda s}Q^\top$, where $e^{\Lambda s}$ is a diagonal matrix with diagonal entries of the form $e^{\lambda_i s}$. Thus $\Vert e^{\Lambda s}\Vert = e^{\lambda_{min}s}$, where $\lambda_{min}$ is the smallest eigenvalue of $\bh$. Moreover, by orthogonality of $Q$, we have $\Vert Q\Vert \Vert Q^\top\Vert = 1$. Thus
\begin{equation}
\label{norm}
\begin{split}
\Vert e^ {\bh s} G_s \Vert &\leq \Vert Q \Vert \Vert e^{\Lambda s}\Vert \Vert Q^\top\Vert \Vert G_s\Vert\\
&= e^{\lambda_{min} s} \Vert G_s\Vert\\
&\leq e^{\lambda_{min} s} \sqrt{n} \max_{i} |G_s^{(i)}|.
\end{split}
\end{equation}
On the other hand, Lemma \ref{lemma:boundforvariance} gives
\begin{equation*}
\p\left(\left| e^{\frac{\lambda_{min}}{2} s} G_s^{(i)} \right|  > \epsilon\right) \leq \frac{\mathrm{var}\left(e^{\frac{\lambda_{min}}{2} s} G_s^{(i)}\right)}{\epsilon^2} \leq \frac{C_i e^{\lambda_{min}s} (1+|s|)^ 2}{\epsilon^ 2}.
\end{equation*}
Thus Borel-Cantelli implies
$\left| e^{\frac{\lambda_{min}}{2} s} G_s^{(i)} \right|\to 0$ almost surely as $s \to -\infty$, which further implies

\begin{equation}
\label{max}
\lim_{s\to -\infty} \max_{i} |e^{\frac{\lambda_{min}}{2} s} G_s^{(i)}| = 0
\end{equation}
almost surely. Hence we observe
\begin{equation*}
\begin{split}
\int_{-\infty}^0 \Vert e^{\bh s} G_s\Vert ds &\leq \int_{-\infty}^0 C_n e^{\lambda_{min}s} \max_i |G_s^{(i)}| ds\\
&= C_n \int_{-\infty}^ 0 e^{\frac{\lambda_{min}}{2}s} \max_i  |e^{\frac{\lambda_{min}}{2} s} G_s^{(i)}| ds\\
&\leq C_n \sup_{s\in(-\infty, 0]} \left \{\max_i  |e^{\frac{\lambda_{min}}{2} s} G_s^{(i)}|\right \} \int_{-\infty}^0 e^{\frac{\lambda_{min}}{2}s} ds,
\end{split}
\end{equation*}
where the supremum term is finite almost surely by \eqref{max}. This concludes the proof.
\end{proof}
\end{lemma}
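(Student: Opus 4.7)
The plan is to show almost sure absolute integrability of $\|e^{\bh s}G_s\|$ on $(-\infty,0]$, from which the existence of the limit follows by monotone convergence of the tail integrals. The key ingredient is that multiplication by $e^{\bh s}$ contributes exponential decay in $s$ (as $s\to-\infty$) while $G_s$ can only grow polynomially in a suitable almost sure sense.

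First, I would diagonalise $\bh = Q\Lambda Q^\top$ with $\Lambda = \mathrm{diag}(\lambda_1,\ldots,\lambda_n)$, all $\lambda_i>0$ since $\bh>0$. Then $e^{\bh s} = Q e^{\Lambda s}Q^\top$, and using orthogonality of $Q$, for $s\le 0$ one has $\|e^{\bh s}\| = \|e^{\Lambda s}\| = e^{\lambda_{\min} s}$, where $\lambda_{\min}>0$. Applying this with the componentwise bound $\|G_s\|\le\sqrt{n}\max_i |G_s^{(i)}|$ gives
$$
\|e^{\bh s}G_s\| \le \sqrt{n}\,e^{\lambda_{\min}s}\max_i |G_s^{(i)}|, \qquad s\le 0.
$$
Writing $e^{\lambda_{\min}s} = e^{(\lambda_{\min}/2)s}\cdot e^{(\lambda_{\min}/2)s}$, the first factor yields a finite integral over $(-\infty,0]$, so it suffices to prove that $s\mapsto e^{(\lambda_{\min}/2)s}\max_i |G_s^{(i)}|$ is bounded on $(-\infty,0]$ almost surely.

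For this, apply Lemma \ref{lemma:boundforvariance} componentwise (extending it to $s<0$ via stationarity of increments, so that $\mathrm{var}(G_s^{(i)})\le C_i(|s|+1)^2$), and combine with Chebyshev's inequality to obtain
$$
\p\!\left(\bigl|e^{(\lambda_{\min}/2)s}G_s^{(i)}\bigr|>\varepsilon\right) \le \frac{C_i\,e^{\lambda_{\min}s}(1+|s|)^2}{\varepsilon^2}.
$$
Evaluated along the sequence $s=-k$ ($k\in\mathbb{N}$) this is summable, so Borel-Cantelli gives $|e^{-(\lambda_{\min}/2)k}G_{-k}^{(i)}|\to 0$ a.s. as $k\to\infty$. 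To upgrade from integers to a uniform tail bound on $(-\infty,0]$, I would invoke almost sure continuity of $G$ together with stationarity of increments: the supremum of $\max_i|G_s^{(i)}-G_{-k}^{(i)}|$ over each interval $[-k-1,-k]$ has a distribution that does not depend on $k$, and another Borel-Cantelli argument controls the fluctuations on consecutive unit intervals. Path continuity then ensures local boundedness on compact parts of $(-\infty,0]$, and combined with the decay at $-\infty$ produces a finite a.s.\ bound for $\sup_{s\in(-\infty,0]}e^{(\lambda_{\min}/2)s}\max_i|G_s^{(i)}|$.

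With this uniform bound in hand, $\int_{-\infty}^0 \|e^{\bh s}G_s\|\,ds$ is finite almost surely, so $\lim_{u\to\infty}\int_{-u}^0 e^{\bh s}G_s\,ds$ exists a.s.\ as a Lebesgue integral (and hence as the claimed Riemann-Stieltjes limit via integration by parts). The main obstacle is the upgrade from the pointwise Borel-Cantelli conclusion at integer times to a uniform tail bound on the continuum; this is precisely where path continuity and stationarity of increments must be used in concert, and where the author's proof is somewhat telegraphic.
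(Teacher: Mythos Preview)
Your proposal is correct and follows essentially the same route as the paper: diagonalise $\bh$, bound $\|e^{\bh s}G_s\|$ by $e^{\lambda_{\min}s}\sqrt{n}\max_i|G_s^{(i)}|$, split the exponential, and use the variance bound with Chebyshev and Borel--Cantelli to show $e^{(\lambda_{\min}/2)s}\max_i|G_s^{(i)}|$ is almost surely bounded on $(-\infty,0]$. You are in fact more careful than the paper at the one delicate point: the paper simply asserts that Borel--Cantelli yields $e^{(\lambda_{\min}/2)s}|G_s^{(i)}|\to 0$ as $s\to-\infty$ along the continuum, whereas you correctly note that Borel--Cantelli gives this only along a sequence and that an additional argument (via stationarity of increments and path continuity) is needed to pass to all $s$---exactly the spot you flag as ``telegraphic'' in the original.
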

We are now ready to prove Theorem \ref{main2}.

\begin{proof}[Proof of Theorem \ref{main2}]
Lemma \ref{lemma:existenceofintegral} together with assumption \eqref{eq:var-bound} gives us $G\in\mathcal{G}_\bh$, and by Theorem \ref{main1} the solution with initial \eqref{initial2} $U$ is stationary. Now \eqref{langevin} and $G_0=0$ gives, for every $t\geq 0$, that
\begin{equation}
\label{eq:G}
G_t- G_0 = G_t = U_t -U_0 + \bh\int_0^t U_s ds.
\end{equation}
In the sequel, we use short notation $
\Delta_t U = U_t-U_0$. Noticing that $\bh^\top = \bh,$ we now get from \eqref{eq:G} that
\begin{equation*}
\begin{split}
G_t G_t^\top &= \Delta_t U (\Delta_t U)^\top  + \Delta_t U \left(\int_{0}^ {t} U_s ds\right)^\top\bh + \bh\int_{0}^ {t} U_s ds\ (\Delta_t U)^\top\\
&\ +  \bh \int_{0}^ {t} U_s ds\left(\int_{0}^ {t} U_s ds\right)^\top \bh\\
&= \Delta_t U (\Delta_t U)^\top  + \int_{0}^ {t} \Delta_t U U_s^\top ds\ \bh + \bh\int_{0}^ {t} U_s (\Delta_t U)^\top ds\\
&\ + \bh \int_{0}^{t} \int_{0}^ {t} U_s U_u^\top du ds \bh
\end{split}
\end{equation*}
Taking expectation on both sides completes the proof.
Indeed, the first order term with respect to $\bh$ is
\begin{equation*}
\begin{split}
&\int_{0}^ {t} \left(\gamma(t-s) - \gamma(-s)\right)  ds\ \bh + \bh\int_{0}^ {t}\left( \gamma(s-t) - \gamma(s)\right) ds\\
 =& \int_0^t \left(\gamma(s) - \gamma(-s) \right) ds\ \bh + \bh \int_0^ t \left(\gamma(-s) - \gamma(s) \right)ds\\
 =& \int_0^t \left( \gamma(s) - \gamma(s)^\top \right)ds\ \bh + \bh \int_0^t \left( \gamma(s)^\top - \gamma(s)\right) ds\\
 =&  \Bh_t \bh + \bh \Bh_t^\top,
\end{split}
\end{equation*}
where $\gamma(t)$ is the cross-covariance matrix of $U$ given by \eqref{cross-covariance}. Computing other expectations similarly and rearranging terms gives us \eqref{riccati}.
\end{proof}

\subsection{Proofs of Theorem \ref{theo:pertur} and Theorem \ref{theo:fclt}}
\label{subsec:proof-3}
We begin with some preliminary notation. Let $\Bh$, $\Ch$, and $\Dh$ denote the coefficient matrices of the original CARE \eqref{riccati}, and let $\Phi = \Bh - \Ch \bh$. We define a linear operator $L: S^n \to S^n$ by
\begin{align*}
L(M) &= \Phi^\top M + M \Phi.
\end{align*}
The operator $L$ is bounded and invertible (see e.g. \cite{stewart1990} and \cite{sun1998perturbation}). In addition, we define linear operators $Q: S^n\to S^n$ and $P: \mathbb{R}^{n\times n} \to S^n$ by
\begin{align*}
Q(M) &= L^{-1} (\bh M\bh)\\
P(M) &= L^{-1}(\bh M+M^\top \bh).
\end{align*}
Since $L$ is bounded, also $Q$ and $P$ are bounded operators. We set
\begin{equation*}
l = \frac{1}{\Vert L^{-1}\Vert}, \quad p = \Vert P\Vert, \quad q = \Vert Q\Vert
\end{equation*}
and
\begin{align}
\label{epsilonetc}
\begin{split}
\epsilon_T &= \frac{1}{l} \Vert\Delta_T \Dh\Vert + p \Vert\Delta_T \Bh\Vert + q\Vert \Delta_T \Ch\Vert\\
\delta_T &= \Vert\Delta_T \Bh\Vert + \Vert\Delta_T \Ch\Vert \Vert\bh\Vert\\
g_T &= \Vert\Ch\Vert + \Vert\Delta_T \Ch\Vert\\
\epsilon_T^* &= \frac{2l\epsilon_T}{l-2\delta_T+\sqrt{(l-2\delta_T)^2 - 4lg_T\epsilon_T}}.
\end{split}
\end{align}
The following theorem is taken from \cite{sun1998perturbation}, but stated using our notation.

\begin{theorem}
\label{theorem:pertur}
Let $\bh$ be the unique positive semidefinite solution to the CARE \eqref{CARE}. Then if the coefficient matrices $\hat{\Dh}_T$ and $\hat{\Ch}_T$ of the perturbed CARE \eqref{perCARE} are positive semidefinite, and if
\begin{equation}
\label{condition}
\delta_T + \sqrt{lg_T\epsilon_T} < \frac{l}{2},
\end{equation}
then \eqref{perCARE} has a unique solution $\hat{\bh}_T\geq 0$ satisfying
$$\Vert\hat{\bh}_T - \bh\Vert \leq \epsilon_T^*.$$
\end{theorem}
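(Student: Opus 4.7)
The plan is to reduce the proof to a Banach fixed-point argument for the perturbation $\Delta\bh := \hat{\bh}_T - \bh$. First I would subtract the perturbed CARE \eqref{perCARE} from the original CARE \eqref{CARE}. Writing $\hat{\Bh}_T = \Bh + \Delta_T\Bh$ and so on, and collecting terms, the linear-in-$\Delta\bh$ part is exactly
$$
(\Bh - \Ch\bh)^\top \Delta\bh + \Delta\bh\,(\Bh - \Ch\bh) = L(\Delta\bh),
$$
while the inhomogeneous part coming from the coefficient perturbations is $-\bh\,\Delta_T\Ch\,\bh + \Delta_T\Bh^\top\bh + \bh\,\Delta_T\Bh + \Delta_T\Dh$; everything else is either quadratic in $\Delta\bh$ or bilinear in $\Delta\bh$ and the $\Delta_T$ quantities. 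Since $\Phi = \Bh - \Ch\bh$ is stable (Lemma \ref{lemma:uniqueness} applied to the original, unperturbed CARE, noting $\Ch,\Dh>0$), the Lyapunov operator $L$ is a linear bijection on $S^n$, and I can apply $L^{-1}$ to rewrite the equation as $\Delta\bh = \mathcal{T}(\Delta\bh)$, where
$$
\mathcal{T}(X) = Q(\Delta_T\Ch) - P(\Delta_T\Bh) - L^{-1}(\Delta_T\Dh) + L^{-1}\!\bigl(R(X)\bigr),
$$
and $R(X)$ collects the remaining terms, namely the bilinear pieces $\Delta_T\Bh^\top X + X\,\Delta_T\Bh - \bh\,\Delta_T\Ch\,X - X\,\Delta_T\Ch\,\bh$ and the quadratic piece $-X\,\hat{\Ch}_T\,X$.

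Next I would bound $\mathcal{T}$ in operator norm. The constant part is bounded by $\epsilon_T$ exactly as in \eqref{epsilonetc}; the bilinear part of $R(X)$ contributes at most $2\delta_T \Vert X\Vert$; the quadratic part contributes $g_T \Vert X\Vert^2$. Thus for $\Vert X\Vert \leq r$,
$$
\Vert \mathcal{T}(X)\Vert \;\leq\; \epsilon_T + \frac{1}{l}\bigl(2\delta_T\, r + g_T\, r^2\bigr),
$$
and a Lipschitz estimate of the same form, $\Vert \mathcal{T}(X_1) - \mathcal{T}(X_2)\Vert \leq \frac{1}{l}(2\delta_T + 2g_T r)\Vert X_1 - X_2\Vert$, holds on the ball of radius $r$. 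The condition \eqref{condition} is engineered precisely so that the quadratic inequality $g_T r^2 - (l - 2\delta_T) r + l\epsilon_T \leq 0$ has real roots and that $l - 2\delta_T > 0$; the smaller root is $\epsilon_T^*$ given in \eqref{epsilonetc}. Taking $r = \epsilon_T^*$ makes $\mathcal{T}$ both a self-map and a strict contraction on the closed ball $\{X \in S^n : \Vert X\Vert \leq \epsilon_T^*\}$, and the Banach fixed-point theorem yields a unique fixed point $\Delta\bh$ there, with $\Vert\Delta\bh\Vert \leq \epsilon_T^*$. Setting $\hat{\bh}_T = \bh + \Delta\bh$ gives a symmetric solution to \eqref{perCARE}; positive semidefiniteness follows from the fact that the limit is $\bh > 0$, so that for $\epsilon_T^*$ small enough $\hat{\bh}_T$ is itself positive definite.

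Finally, I would upgrade the local uniqueness obtained from the fixed-point theorem to the global uniqueness in the class of positive semidefinite matrices. This is where I expect the main obstacle: the Banach argument only yields uniqueness in a small ball, so I need an independent argument for global uniqueness of \eqref{perCARE}. I would obtain this by invoking Lemma \ref{lemma:uniqueness} applied to the perturbed coefficients: $\hat{\Ch}_T, \hat{\Dh}_T \geq 0$ by hypothesis, and stabilizability of $(\hat{\Bh}_T, \hat{\Ch}_T)$ together with detectability of $(\hat{\Dh}_T, \hat{\Bh}_T)$ follows from the corresponding properties in the unperturbed CARE by a continuity/small-perturbation argument under \eqref{condition} (this is the content of the construction in the Corollary following Lemma \ref{lemma:uniqueness}, transported to the perturbed coefficients). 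Thus \eqref{perCARE} admits a unique positive semidefinite solution, which must coincide with the fixed point produced above, completing the bound $\Vert\hat{\bh}_T - \bh\Vert \leq \epsilon_T^*$.
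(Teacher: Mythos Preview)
The paper does not prove this theorem at all: it is quoted verbatim from \cite{sun1998perturbation} (see the sentence immediately preceding Theorem~\ref{theorem:pertur}), and only the derived identity $L(\Delta_T\bh)=-E_T+h_1(\Delta_T\bh)+h_2(\Delta_T\bh)$ is later reused inside the proof of Theorem~\ref{theo:fclt}. Your fixed-point outline is in fact the standard route taken in that reference, and your computations for the self-map condition and the identification of $\epsilon_T^*$ as the smaller root of $g_T r^2-(l-2\delta_T)r+l\epsilon_T=0$ are correct (there is a harmless sign slip in your $R(X)$: the bilinear block should carry the sign of $h_1$, i.e.\ $-[\Delta_T\Bh^\top X+X\Delta_T\Bh-\bh\Delta_T\Ch X-X\Delta_T\Ch\bh]$, and the quadratic block is $+X\hat{\Ch}_T X$).

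Where your argument is genuinely incomplete is the last step. First, ``$\hat{\bh}_T\geq 0$ because $\epsilon_T^*$ is small enough'' is not available: the hypothesis is only \eqref{condition}, which does not force $\epsilon_T^*$ below the smallest eigenvalue of $\bh$. Second, your plan to obtain global uniqueness by transporting the Corollary after Lemma~\ref{lemma:uniqueness} to the perturbed coefficients does not go through as written: that Corollary needs $\hat{\Ch}_T,\hat{\Dh}_T>0$ to build the feedback $K$, whereas here only $\hat{\Ch}_T,\hat{\Dh}_T\geq 0$ is assumed. The way Sun closes both gaps is to show that the fixed point is \emph{stabilizing}: from $\Vert\hat\Phi-\Phi\Vert\leq \delta_T+g_T\epsilon_T^*<l/2$ and the separation bound for the Lyapunov operator one deduces that $\hat\Phi=\hat{\Bh}_T-\hat{\Ch}_T\hat{\bh}_T$ is stable; stabilizability of $(\hat{\Bh}_T,\hat{\Ch}_T)$ is then witnessed by $K=-\hat{\bh}_T$, and classical CARE theory (maximal/stabilizing solution) yields both $\hat{\bh}_T\geq 0$ and uniqueness among positive semidefinite solutions. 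If you insert that stability step in place of your two heuristic sentences, the argument becomes complete.
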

\begin{rem}
Theorem \ref{theorem:pertur} holds for any unitarily invariant submultiplicative matrix norm $\Vert\cdot\Vert$, not just the spectral norm we are using. On the other hand, for our purposes the choice of the norm does not matter since in finite dimensions all norms are equivalent.
\end{rem}
Note that $\epsilon_T = \mathcal{O}(\Vert\Delta
_T \Dh\Vert + \Vert\Delta_T \Bh\Vert+ \Vert\Delta_T \Ch\Vert)$, where $\mathcal{O}$ denotes the usual Landau notation with meaning $X_T = \mathcal{O}(Y_T)$ if $|X_T| \leq c|Y_T|$ for some constant $c$. We also note that
Theorem \ref{theorem:pertur} gives us the following first-order perturbation bound
\begin{equation}
\label{firstorder}
\begin{split}
\Vert\hat{\bh}_T - \bh\Vert  \leq \epsilon_T + \mathcal{O}(\Vert\Delta_T \Dh\Vert^2 + \Vert\Delta_T \Bh\Vert^2 + \Vert\Delta_T \Ch\Vert^2).
\end{split}
\end{equation}
Recall that $\hat{\gamma}_T(s)$ and $\hat{\gamma}_{T,i,j}(s)$ denote some fixed estimators of $\gamma(s)$ and $\gamma_{i,j}(s)$ respectively. Next result gives bound \eqref{firstorder} in terms of cross-covariance estimators.
\begin{lemma}
\label{lemma:frobenius}
For $\epsilon_T$ given in \eqref{epsilonetc}, we have
\begin{equation*}
\begin{split}
\epsilon_T &\leq \left(2p+2qt\right) \int_0^t \Vert \hat{\gamma}_{T}(s)- \gamma(s)\Vert ds \\
&+ \frac{2}{l}\left(\Vert \hat{\gamma}_{T}(0)- \gamma(0)\Vert + \Vert \hat{\gamma}_{T}(t)- \gamma(t)\Vert\right).
\end{split}
\end{equation*}
In particular,
\begin{equation*}
\epsilon_T \leq \sup_{s\in[0,t]} \Vert\hat{\gamma}_{T}(s)- \gamma(s)\Vert \left(2pt+2qt^2 + \frac{4}{l}\right).
\end{equation*}
\begin{proof}
We have
\begin{equation*}
\begin{split}
\Delta_T \Bh &= \int_0 ^t \left( \hat{\gamma}_T(s) - \hat{\gamma}_T(s)^\top\right) ds - \int_0^t \left( \gamma(s) - \gamma(s)^\top \right) ds\\
&= \int_0^t \left( \hat{\gamma}_T(s) - \gamma(s) \right) ds + \int_0^t \left( \gamma(s)^\top - \hat{\gamma}_T(s)^\top \right)ds
\end{split}
\end{equation*}
implying
\begin{equation}
\label{||B||}
\Vert\Delta_T \Bh\Vert \leq 2 \int_0^t \Vert \hat{\gamma}_T(s) - \gamma(s) \Vert ds.
\end{equation}
Similarly, for
\begin{equation*}
\Delta_T \Ch = \int_0^t\int_0^t \hat{\gamma}_T(s-u) du ds - \int_0^t\int_0^t \gamma(s-u) du ds
\end{equation*}
we get
\begin{equation}
\label{||C||}
\begin{split}
\Vert\Delta_T \Ch\Vert &\leq \int_0^t\int_0^t \Vert\hat{\gamma}_T(s-u) - \gamma(s-u)\Vert du ds\\
&\leq 2t \int_0^t \Vert \hat{\gamma}_{T}(s) - \gamma(s) \Vert ds.
\end{split}
\end{equation}
Finally, for $\Delta_T \Dh$ we have
\begin{equation*}
\Delta_T \Dh = \hat{\Sigma}_{G_t} - \hat{\Sigma}_{U_t-U_0} - \Sigma_{G_t} + \Sigma_{U_t-U_0} = \Sigma_{U_t -U_0} - \hat{\Sigma}_{U_t-U_0},
\end{equation*}
where $\Sigma_X$ denotes the covariance matrix of a vector $X$.
Now
\begin{equation*}
\begin{split}
\Sigma_{U_t-U_0} = 2\gamma(0)- \gamma(t) - \gamma(t)^\top
\end{split}
\end{equation*}
giving us
\begin{equation}
\label{||D||}
\Vert\Delta_T \Dh\Vert \leq 2 \Vert \hat{\gamma}_{T}(0)- \gamma(0)\Vert+ 2 \Vert\hat{\gamma}_{T}(t)- \gamma(t)\Vert.
\end{equation}
Combining the estimates \eqref{||B||}, \eqref{||C||} and \eqref{||D||} concludes the proof.
\end{proof}
\end{lemma}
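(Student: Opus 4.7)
The plan is to estimate the three summands making up $\epsilon_T = \frac{1}{l}\Vert\Delta_T\Dh\Vert + p\Vert\Delta_T\Bh\Vert + q\Vert\Delta_T\Ch\Vert$ one at a time, each in terms of the cross-covariance error $\hat{\gamma}_T - \gamma$, and then assemble the pieces. Since $\Bh$, $\Ch$, $\Dh$ and their estimated counterparts differ only through $\gamma$ versus $\hat{\gamma}_T$ in their defining integrals \eqref{coefofriccati1}--\eqref{coefofriccati3}, each $\Delta_T$-matrix is directly expressible as an integral or a finite combination of $\hat{\gamma}_T - \gamma$, and the standard triangle inequality for matrix norms will do the rest.

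For $\Delta_T\Bh$ I would pull the subtraction inside the integral to obtain
\begin{equation*}
\Delta_T\Bh = \int_0^t \bigl[(\hat{\gamma}_T(s) - \gamma(s)) - (\hat{\gamma}_T(s)^\top - \gamma(s)^\top)\bigr]\, ds,
\end{equation*}
and then use the triangle inequality together with the fact that the spectral norm is invariant under transposition to get $\Vert\Delta_T\Bh\Vert \le 2\int_0^t \Vert\hat{\gamma}_T(s) - \gamma(s)\Vert\, ds$. For $\Delta_T\Ch$ the same triangle-inequality idea applied to the double integral yields $\Vert\Delta_T\Ch\Vert \le \int_0^t\!\int_0^t \Vert\hat{\gamma}_T(s-u) - \gamma(s-u)\Vert\, du\, ds$; folding this via the change of variables $r = s-u$ and the identity $\gamma(-r) = \gamma(r)^\top$ (which converts the integrand on $[-t,0]$ into one on $[0,t]$ up to a transpose) produces $\Vert\Delta_T\Ch\Vert \le 2t\int_0^t \Vert\hat{\gamma}_T(s) - \gamma(s)\Vert\, ds$.

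The only slightly delicate step is $\Delta_T\Dh$. Here the point is that $\mathrm{cov}(G_t)$ is treated as known (cf.\ Remark \ref{rem:1d}) and appears identically in both $\Dh$ and $\hat{\Dh}_T$, hence cancels, leaving $\Delta_T\Dh = \Sigma_{U_t - U_0} - \hat{\Sigma}_{U_t - U_0}$. Expanding $\Sigma_{U_t - U_0} = 2\gamma(0) - \gamma(t) - \gamma(t)^\top$ (which is immediate from stationarity of $U$ and the definition of $\gamma$), applying the triangle inequality a final time, and again using invariance under transposition give $\Vert\Delta_T\Dh\Vert \le 2\Vert\hat{\gamma}_T(0) - \gamma(0)\Vert + 2\Vert\hat{\gamma}_T(t) - \gamma(t)\Vert$.

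Substituting these three bounds into the definition of $\epsilon_T$ and grouping the terms according to whether they involve an integral of $\Vert\hat{\gamma}_T(s) - \gamma(s)\Vert$ or its pointwise values at $0$ and $t$ yields the first claimed inequality. The second follows from the crude bound $\int_0^t \Vert\hat{\gamma}_T(s) - \gamma(s)\Vert\, ds \le t\sup_{s\in[0,t]}\Vert\hat{\gamma}_T(s) - \gamma(s)\Vert$. No step is genuinely hard; the only conceptual point worth flagging is the cancellation of the unknown-noise covariance $\mathrm{cov}(G_t)$ in $\Delta_T\Dh$, and after that it is simply bookkeeping with the triangle inequality.
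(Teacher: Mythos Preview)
Your proposal is correct and follows essentially the same route as the paper: bound $\Vert\Delta_T\Bh\Vert$, $\Vert\Delta_T\Ch\Vert$, and $\Vert\Delta_T\Dh\Vert$ separately via the triangle inequality (using transpose invariance and the fold of the double integral for $\Ch$, and the cancellation of $\mathrm{cov}(G_t)$ for $\Dh$), then plug into the definition of $\epsilon_T$. The only cosmetic difference is that you spell out the change of variables $r=s-u$ and the use of $\gamma(-r)=\gamma(r)^\top$ for the $\Ch$-bound, which the paper leaves implicit.
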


\begin{kor}
\label{kor:pertur}
Let $\bh$ be the unique positive definite solution to the CARE \eqref{CARE}. If the coefficient matrices $\hat{\Dh}_T$ and $\hat{\Ch}_T$ of the perturbed CARE \eqref{perCARE} are positive semidefinite, and if
\begin{equation}
\label{condition2}
\delta_T + \sqrt{lg_T\epsilon_T} < \frac{l}{2},
\end{equation}
then \eqref{perCARE} has a unique positive semidefinite solution $\hat{\bh}_T$. Moreover,
\begin{equation*}
\Vert\hat{\bh}_T - \bh\Vert \leq  c \sup_{s\in[0,t]} \Vert\hat{\gamma}_{T}(s)- \gamma(s)\Vert + \mathcal{O}\left(\sup_{s\in[0,t]} \Vert\hat{\gamma}_{T}(s)- \gamma(s)\Vert^2\right),
\end{equation*}
where the constant $c$ depends on $n$, $\bh$, $t$ and the cross-covariance of $G$.

\begin{proof}
The claim follows directly from the bound \eqref{firstorder}, Lemma \ref{lemma:frobenius} and the proof of  Lemma \ref{lemma:frobenius}.
\end{proof}
\end{kor}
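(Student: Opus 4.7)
The plan is a short two-step deduction that chains Theorem \ref{theorem:pertur} with Lemma \ref{lemma:frobenius}. First, I would observe that $\bh>0$ is in particular positive semidefinite and, by hypothesis, is the unique positive definite (hence the unique positive semidefinite) solution of \eqref{CARE}, so Theorem \ref{theorem:pertur} applies verbatim: its assumptions on positive semidefiniteness of $\hat{\Ch}_T,\hat{\Dh}_T$ and the smallness condition \eqref{condition2} match ours word for word. This delivers both the existence of a unique positive semidefinite solution $\hat{\bh}_T$ of the perturbed CARE \eqref{perCARE} and the first-order perturbation estimate \eqref{firstorder},
\[
\Vert\hat{\bh}_T-\bh\Vert \leq \epsilon_T + \mathcal{O}\bigl(\Vert\Delta_T\Dh\Vert^2+\Vert\Delta_T\Bh\Vert^2+\Vert\Delta_T\Ch\Vert^2\bigr).
\]

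Second, I would substitute the bounds supplied by Lemma \ref{lemma:frobenius}. The lemma's second inequality gives
\[
\epsilon_T \leq \Bigl(2pt+2qt^2+\tfrac{4}{l}\Bigr)\sup_{s\in[0,t]}\Vert\hat{\gamma}_T(s)-\gamma(s)\Vert,
\]
which produces the linear term with the explicit choice $c=2pt+2qt^2+4/l$. For the quadratic remainder I would reuse the intermediate inequalities established inside the proof of Lemma \ref{lemma:frobenius}, namely that each of $\Vert\Delta_T\Bh\Vert$, $\Vert\Delta_T\Ch\Vert$, $\Vert\Delta_T\Dh\Vert$ is bounded by a constant (depending only on $t$) times $\sup_{s\in[0,t]}\Vert\hat{\gamma}_T(s)-\gamma(s)\Vert$. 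Squaring and summing absorbs the three quadratic contributions into $\mathcal{O}\bigl(\sup_{s\in[0,t]}\Vert\hat{\gamma}_T(s)-\gamma(s)\Vert^2\bigr)$, which completes the stated bound.

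To justify the declared dependence of $c$, I would simply unwind the definitions: the operator norms $l,p,q$ are determined by $L,P,Q$, which are built from $\bh$ and $\Phi=\Bh-\Ch\bh$, and since $\Bh_t,\Ch_t$ are integrals on $[0,t]$ of cross-covariances of $U$ (and hence, via the Langevin equation, of those of $G$), the dependence on $n$, $\bh$, $t$, and the cross-covariance of $G$ is immediate.

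There is no genuine obstacle here: the analytical content has already been established in Theorem \ref{theorem:pertur} (a matrix-analytic perturbation result from \cite{sun1998perturbation}) and Lemma \ref{lemma:frobenius}. The corollary merely repackages their conclusions so that the error is controlled by a single supremum norm of the covariance-estimation error, which is the quantity that feeds into Theorem \ref{theo:pertur} and Theorem \ref{theo:fclt}. The only item requiring care in writing out the proof is bookkeeping on the constant $c$, but this is transparent from the formulas for $L,P,Q$.
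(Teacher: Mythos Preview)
Your proposal is correct and follows exactly the paper's approach: the paper's one-line proof cites precisely the first-order bound \eqref{firstorder}, Lemma \ref{lemma:frobenius}, and the proof of Lemma \ref{lemma:frobenius}, and you have accurately unpacked how these three ingredients combine to yield the linear term, the quadratic remainder, and the stated dependence of the constant $c$.
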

We are now in position to proof our consistency result, Theorem \ref{theo:pertur}.
\begin{proof}[Proof of Theorem \ref{theo:pertur}]
We first pick $\rho>0$ small enough such that if
\begin{equation}
\label{assumption}
\sup_{s\in[0,t]} \Vert\hat{\gamma}_{T}(s)- \gamma(s)\Vert  \leq \rho,
\end{equation}
then $\delta_T + \sqrt{lg_T\epsilon_T} < \frac{l}{2}$. Since $\Ch,\Dh>0$, we have that $\hat{\Ch}_T,\hat{\Dh}_T$ are positive definite whenever $\rho$ is chosen small enough.
 Moreover, in this case
Corollary \ref{kor:pertur} gives 
\begin{equation}
\label{eq:H-bound}
\Vert\hat{\bh}_T - \bh\Vert \leq  \tilde{C} \sup_{s\in[0,t]} \Vert\hat{\gamma}_{T}(s)- \gamma(s)\Vert.
\end{equation}
Next, let $\varepsilon>0$ be arbitrary such that $\frac{\varepsilon}{\tilde{C}} \leq \rho$ and set
\begin{equation*}
A_{T,\varepsilon} \coloneqq \left\{\omega : \sup_{s\in[0,t]} \Vert\hat{\gamma}_{T}(s)- \gamma(s)\Vert \leq \frac{\varepsilon}{\tilde{C}} \right\}.
\end{equation*}
Now a unique positive semidefinite solution $\hat{\bh}_T$ to \eqref{perCARE} exists for  $\omega \in A_{T,\varepsilon}$, and we have \eqref{eq:H-bound}. Moreover, assumption
$\sup_{s\in[0,t]} \Vert\hat{\gamma}_{T}(s)- \gamma(s)\Vert \overset{\p}{\longrightarrow} 0$ implies that for any $\xi>0$ there exists $T_{\varepsilon,\xi}$ such that for every $T \geq T_{\varepsilon, \xi}$
we have $\p(A_{T,\varepsilon}) \geq 1- \frac{\xi}{2}$. Thus we can conclude
\begin{equation*}
\begin{split}
\p\left(\Vert\hat{\bh}_T - \bh\Vert > \varepsilon\right) &\leq \p\left(\mathbbm{1}_{A_{T,\varepsilon}} \Vert \hat{\bh}_T - \bh\Vert > \varepsilon\right) + \p\left(\mathbbm{1}_{A_{T,\varepsilon}^c}\Vert  \hat{\bh}_T - \bh\Vert > \varepsilon\right) \\
&\leq \p\left( \sup_{s\in[0,t]} \Vert\hat{\gamma}_{T}(s)- \gamma(s)\Vert > \frac{\varepsilon}{\tilde{C}}\right) + \p(A_{T,\varepsilon}^c)  \\
& \leq \frac{\xi}{2} + \frac{\xi}{2} = \xi.
\end{split}
\end{equation*}
This concludes the proof.
\end{proof}
Before proving Theorem \ref{theo:fclt} we recall an auxiliary lemma, taken from \cite{horn1991}.

\begin{lemma}
\label{lemma:krosum}
Let $E$ and $F$ be square matrices of sizes $m$ and $n$, respectively. Then all the eigenvalues of the Kronecker sum
\begin{equation*}
E \oplus F = (I_n \otimes E) + (F \otimes I_m)
\end{equation*}
are of the form $\lambda_i + \lambda_j$, where $\lambda_i$ is an eigenvalue of $E$ and $\lambda_j$ is an eigenvalue of $F$.
\end{lemma}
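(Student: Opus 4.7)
The plan is to exploit the Schur decomposition to simultaneously upper-triangularize the two summands of $E\oplus F$ by a single unitary change of basis, and then simply read the eigenvalues off the diagonal.

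First, by Schur's theorem I would write $E = UTU^*$ and $F = VSV^*$ with $U \in \mathbb{C}^{m\times m}$, $V \in \mathbb{C}^{n \times n}$ unitary and $T,S$ upper triangular whose diagonals are exactly the eigenvalues $\lambda_1^E,\ldots,\lambda_m^E$ and $\lambda_1^F,\ldots,\lambda_n^F$ (repeated according to algebraic multiplicity). The next step is to apply the mixed-product identity $(A\otimes B)(C\otimes D) = AC \otimes BD$ with the unitary matrix $W := V \otimes U$. A short computation gives
\begin{equation*}
W^*(I_n \otimes E)W = I_n \otimes T, \qquad W^*(F \otimes I_m)W = S \otimes I_m,
\end{equation*}
and hence
\begin{equation*}
W^*(E \oplus F)W \;=\; I_n \otimes T + S \otimes I_m.
\end{equation*}

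The remaining step is essentially bookkeeping. The matrix $I_n \otimes T$ is block-diagonal with $n$ copies of the upper triangular $T$, so it is upper triangular with diagonal entries $\lambda_1^E,\ldots,\lambda_m^E$ repeated $n$ times. The matrix $S \otimes I_m$ is block upper triangular with diagonal blocks $s_{jj}I_m = \lambda_j^F I_m$. Their sum is therefore upper triangular and its diagonal lists exactly the $mn$ numbers $\lambda_i^E + \lambda_j^F$ for $i=1,\ldots,m$ and $j=1,\ldots,n$. Since eigenvalues of a triangular matrix coincide with its diagonal entries, and similarity preserves the spectrum, these are precisely the eigenvalues of $E \oplus F$.

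I do not anticipate a genuine obstacle; the only point requiring care is to keep the Kronecker factors in the correct order so that the single conjugation by $W = V \otimes U$ simultaneously triangularizes both $I_n \otimes E$ and $F \otimes I_m$, which is exactly what the mixed-product property delivers.
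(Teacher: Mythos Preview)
Your argument is correct. The Schur decompositions of $E$ and $F$, combined via the unitary $W=V\otimes U$ and the mixed-product rule, do simultaneously upper-triangularize both summands, and the diagonal of $I_n\otimes T + S\otimes I_m$ is exactly the list $\lambda_i^E+\lambda_j^F$.

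There is, however, nothing to compare against: the paper does not prove this lemma at all but merely quotes it as a known fact from \cite{horn1991}. Your Schur-based argument is in fact the standard proof given there, so you have supplied precisely the proof the paper chose to outsource.
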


\begin{proof}[Proof of Theorem \ref{theo:fclt}]
\textbf{Item (1)}:\\
Recall that
\begin{align*}
\Delta_T \Dh &= 2(\hat{\gamma}_T(0) - \gamma(0)) + \gamma(t) - \hat{\gamma}_T(t) + \gamma(t)^\top - \hat{\gamma}_T(t)^\top\\
\Delta_T  \Bh &= \int_0^t \hat{\gamma}_T(s) - \gamma(s) ds + \int_0^t \gamma(s)^\top - \hat{\gamma}_T(s)^\top ds.
\end{align*}
Similarly, using
\begin{equation*}
\begin{split}
\int_0^ t \int_0^ t \gamma(u-s) du ds = \int_0^ t \int_0^s \gamma(u-s) du ds + \int_0^t \int_s^t \gamma(u-s) du ds,
\end{split}
\end{equation*}
where
\begin{equation*}
\begin{split}
 \int_0^ t \int_0^s \gamma(u-s) du ds &= \int_0^ t \int_0^s \gamma(s-u)^\top du ds = \int_0^t \int_0^s \gamma(z) ^\top dz ds\\
&= \int_0^t \int_z^t  \gamma(z)^\top ds dz = \int_0^ t (t-z) \gamma(z)^\top dz
\end{split}
\end{equation*}
and
\begin{equation*}
\int_0^t \int_s^t \gamma(u-s) du ds = \int_0^ t (t-z) \gamma(z) dz,
\end{equation*}
we get
\begin{equation*}
\begin{split}
\Delta_T \Ch &= \int_0^t \int_0^ t \hat{\gamma}_T(s-u) - \gamma(s-u) du ds \\
&= \int_0^ t (t-s) (\hat{\gamma}_T(s) - \gamma(s)) ds + \int_0^ t (t-s) (\hat{\gamma}_T(s)^\top - \gamma(s)^\top) ds.
\end{split}
\end{equation*}
Now by assumption, we have
\begin{equation*}
l(T)\vect (\hat{\gamma}_T(s)- \gamma(s)) \overset{\text{law}}{\longrightarrow} X_s,
\end{equation*}
where $X = (X_s)_{s\in[0,t]}$ is an $n^2$-dimensional stochastic process with continuous paths. Now it is clear that
the linear mapping $L_1: C[0,t]^{n^2} \to \mathbb{R}^{3n^2}$ defined by
\begin{equation*}
L_1(X) = \begin{bmatrix*}
\int_0^ t (t-s)(X_s + \tilde{X}_s) ds\\
\int_0^t X_s -\tilde{X}_s ds\\
2X_0 - X_t -\tilde{X}_t,
\end{bmatrix*}
\end{equation*}
where $\tilde{X}_s$ denotes the permutation of the elements of $X_s$ that corresponds to the order of the elements of $\vect(\gamma(s)^\top)$, is a continuous operator.
Thus we may apply continuous mapping theorem
to conclude that
\begin{equation*}
L_1(l(T)\vect(\hat{\gamma}_T(s)-\gamma(s))) = l(T)\vect(\Delta_T \Ch, \Delta_T \Bh, \Delta_T \Dh) \overset{\text{law}}{\longrightarrow} L_1(X).
\end{equation*}
\textbf{Item (2):}\\
As in the proof of Theorem \ref{theo:pertur}, set
\begin{equation*}
A_T \coloneqq \left\{\omega : \sup_{s\in[0,t]} \Vert\hat{\gamma}_{T}(s)- \gamma(s)\Vert \leq \rho\right\},
\end{equation*}
where $\rho$ is chosen as in the proof of Theorem \ref{theo:pertur}. Then the unique (positive semidefinite) solution $\hat{\bh}_T$ to the perturbed CARE \eqref{perCARE} exists for all $\omega \in A_T$. Let $\Delta_T \bh = \hat{\bh}_T - \bh$. We write 
\begin{equation}
\label{twoasymptotics}
l(T)\vect(\Delta_T \bh) = l(T)\ind \vect(\Delta_T \bh)+ l(T)\indi \vect(\Delta_T \bh).
\end{equation}
Since \eqref{functionalcovariance} implies \eqref{covarianceinP}, Theorem \ref{theo:pertur} implies $\vect(\Delta_T \bh) \overset{\p}{\longrightarrow} 0$. Moreover, we have
\begin{equation*}
\p(l(T)\indi > \varepsilon) \leq \p(A_T^c) \to 0
\end{equation*}
for every $\varepsilon >0$. Thus the second term in \eqref{twoasymptotics} converges to zero in probability, and hence, by Slutsky's theorem, it suffices to consider the first term in \eqref{twoasymptotics}. For this, we first observe that, by the proof of Theorem \ref{theorem:pertur} in \cite{sun1998perturbation}, we have
\begin{equation*}
(B - C\bh)^\top \Delta_T \bh + \Delta_T \bh (\Bh-\Ch \bh) = -E + h_1(\Delta_T \bh) + h_2 (\Delta_T \bh),
\end{equation*}
where
\begin{align*}
E_T &= \Delta_T \Dh +  \Delta_T \Bh^\top \bh + \bh \Delta_T \Bh - \bh \Delta_T \Ch \bh\\
h_1(\Delta_T \bh) &= -[(\Delta_T \Bh - \Delta_T \Ch \bh)^\top \Delta_T \bh + \Delta_T \bh (\Delta_T \Bh - \Delta_T \Ch \bh)]\\
h_2(\Delta_T \bh) &= \Delta_T \bh (\Ch+\Delta_T \Ch) \Delta_T \bh.
\end{align*}
Recall the notation $\Phi = \Bh - \Ch \bh$. Now, by compatibility of vectorization operator and Kronecker product we obtain
\begin{small}
\begin{equation*}
\begin{split}
 \vect \left((\Bh - \Ch \bh)^\top \Delta_T \bh + \Delta_T \bh (\Bh-\Ch \bh) \right) &= \vect (\Phi^\top \Delta_T \bh + \Delta_T \bh \Phi)\\
&= (I \otimes \Phi^\top) \vect (\Delta_T \bh) + (\Phi^\top \otimes  I) \vect (\Delta_T \bh)\\
&= ( I \otimes \Phi^\top + \Phi^\top \otimes I) \vect(\Delta_T \bh)\\
&= (\Phi^\top \oplus \Phi^\top) \vect(\Delta_T \bh),
\end{split}
\end{equation*}
\end{small}
where $\oplus$ denotes the Kronecker sum. By Lemma \ref{lemma:uniqueness} the matrix $\Phi$ is stable, i.e. all its eigenvalues are in the open left half-plane. Hence, by Lemma \ref{lemma:krosum}, $(\Phi^\top \oplus \Phi^\top)$ is invertible, and consequently
\begin{equation*}
\begin{split}
&l(T)\ind \vect(\Delta_T \bh)\\
 =&\ l(T) \ind  (\Phi^\top \oplus \Phi^\top)^{-1}\left(-\vect(E_T) + \vect(h_1(\Delta_T \bh)) + \vect(h_2(\Delta_T \bh)) \right).
\end{split}
\end{equation*}
By compatibility of vectorization and Kronecker product, the terms of\\ $l(T)\ind \vect (h_1(\Delta_T \bh))$ are given by
\begin{align*}
l(T) \ind \vect (\Delta_T \bh \Delta_T \Bh) &= l(T) \ind ( I \otimes \Delta_T \bh) \vect (\Delta_T \Bh)\\
l(T) \ind \vect (\Delta_T \Bh^\top \Delta_T \bh) &= l(T) \ind (\Delta_T \bh \otimes I) \vect (\Delta_T \Bh^\top) \\
l(T) \ind \vect (\bh \Delta_T \Ch \Delta_T \bh) &= l(T) \ind ( \Delta_T\bh  \otimes \bh) \vect (\Delta_T \Ch)\\
l(T) \ind \vect (\Delta_T \bh \Delta_T \Ch \bh) &= l(T) \ind ( \bh \otimes \Delta_T \bh) \vect ( \Delta_T \Ch).\\
\end{align*}
By item (1), Theorem \ref{theo:pertur}, and Slutsky's theorem all these terms converge to zero in probability. Similarly, for the asymptotically dominant term of \\$l(T)\ind \vect(h_2(\Delta_T \bh))$, we get
\begin{equation*}
\ind \Vert \Delta_T \bh \Ch \Delta_T \bh\Vert \leq \ind \Vert \Delta_T \bh\Vert^ 2 \Vert \Ch\Vert.
\end{equation*}
Now on the set $A_T$ we have, by \eqref{firstorder} and item (1), that
\begin{equation*}
\ind\Vert\Delta_T \bh\Vert = \ind \mathcal{O}(\Vert\Delta_T \Dh\Vert + \Vert\Delta_T \Bh\Vert+ \Vert\Delta_T \Ch\Vert).
\end{equation*}
Thus $l(T)\ind \vect(h_2(\Delta_T \bh))$ converges to zero in probability as well, and it suffices to study asymptotics of
\begin{equation}
\label{eq:key-term-CLT}
l(T) \ind  (\Phi^\top \oplus \Phi^\top)^{-1}\left(-\vect(E_T)\right).
\end{equation}
For this we write
\begin{multline*}
-l(T)\ind \vect(E_T) =  l(T) \ind \vect \left( \bh \Delta_T \Ch \bh - \Delta_T \Dh - \Delta_T \Bh^\top \bh - \bh \Delta_T \Bh\right)
\\ = l(T) \ind \Big((\bh \otimes \bh) \vect (\Delta_T \Ch) - \vect(\Delta_T \Dh) - (\bh \otimes I) \vect (\Delta_T \Bh^\top)\\
 - (I \otimes \bh) \vect (\Delta_T \Bh)\Big) .
\end{multline*}
We define a linear function $f:\mathbb{R}^ {3n^2} \rightarrow \mathbb{R}^{n^2}$ by
\begin{small}
\begin{equation}
\label{functionf}
f (\vect (\Ch, \Bh, \Dh)) =   (\bh \otimes \bh) \vect (\Ch) - \vect(\Dh) - (\bh \otimes I) \vect (\Bh^\top) - (I \otimes \bh) \vect (\Bh).
\end{equation}
\end{small}
Then the usual Delta method and Slutsky's theorem implies
\begin{small}
\begin{equation*}
\begin{split}
-l(T)\ind \vect(E_T) &= l(T) \ind \left( f(\vect(\hat{\Ch}_T, \hat{\Bh}_T, \hat{\Dh}_T)) - f (\vect (\Ch, \Bh, \Dh)\right) \overset{\text{law}}{\longrightarrow} L_2^\star L_1(X),
\end{split}
\end{equation*}
\end{small}
where the linear mapping $L_2^\star$ is given by the Jacobian (i.e. matrix representation) of the function $f$ defined in \eqref{functionf}. It remains to apply continuous mapping theorem to \eqref{eq:key-term-CLT} to conclude that
\begin{equation*}
l(T)\ind \vect(\Delta_T \bh) \overset{\text{law}}{\longrightarrow} (\Phi^\top \oplus \Phi^\top)^{-1}L_2^\star L_1(X) \eqqcolon L_2L_1(X).
\end{equation*}
This completes the proof.
\end{proof}

\subsection{Representation of cross-covariance matrix $\gamma(t)$}
\label{subsec:proof-4}
Our main results relies on the cross-covariance matrix $\gamma(s)$ of $U$. However, in many models one assumes that only the variance of the noise $G$ is known. In this subsection we give several representations for $\gamma(s)$ in terms of the variance matrix $v(t) = \e(G_t G_t^\top)$, provided that $G$ has independent components. In particular, then $\e(G_tG_s^\top)$ is a diagonal matrix for all $t,s$, and satisfies
\begin{equation*}
\e(G_tG_s^\top) = \frac{1}{2}(v(t)+v(s)-v(t-s)).
\end{equation*}
Our first representation is the following.
\begin{lemma}
\label{lma:crosscov-rep-1}
Let $G$ have independent components. Then
\begin{multline}
\label{covofU}
\gamma(r) = \frac{e^{-\bh r} \bh}{2}\left( \int_{-\infty}^r e^{\bh x} v(x) dx - \int_{-\infty}^r \int_{-\infty}^0 e^{\bh x} e^{\bh s} v(x) e^{\bh s} \bh ds dx \right.\\  \left. - \int_{r}^\infty \int_{-\infty}^{r-x} e^{\bh x} e^{\bh s} v(x) e^{\bh s}\bh ds dx \right)
+  \frac{1}{2} \int_r^\infty v(x) e^{\bh (r-x)} \bh dx - \frac{1}{2} v(r).
\end{multline}
\begin{proof}
Using representation
$
U_t = G_t -e^{-\bh t}\bh \int_{-\infty}^t e^{\bh s} G_s ds
$
gives us
\begin{equation*}
U_rU_0^\top = -\int_{-\infty}^0 G_r G_s^\top e^{\bh s}\bh ds + e^{-\bh r} \bh \int_{-\infty}^ 0 \int_{-\infty}^r e^{\bh u} G_u G_s^\top e^ {\bh s} \bh du ds. 
\end{equation*}
Taking expectation and using Fubini's theorem thus yields
\begin{equation*}
\begin{split}
\gamma(r) &= -\frac{1}{2} \int_{-\infty}^ 0 (v(r) + v(s) - v(r-s)) e^{\bh s} \bh ds\\
 &+ e^ {-\bh r} \frac{\bh}{2} \int_{-\infty}^0 \int_{-\infty}^ r  e^{\bh u}(v(u) + v(s) - v(u-s)) e^{\bh s} \bh du ds.
\end{split}
\end{equation*}
Here
\begin{small}
\begin{equation*}
e^{-\bh r}\frac{\bh}{2} \int_{-\infty}^ 0 \int_{-\infty}^ r e^{\bh u} v(s) e^ {\bh s} \bh du ds = e^ {-\bh r} \frac{1}{2} \int_{-\infty}^ 0 e^{\bh r} v(s) e^{\bh s} \bh ds = \frac{1}{2} \int_{-\infty}^0 v(s) e^ {\bh s} \bh ds
\end{equation*}
\end{small}
leading to
\begin{small}
\begin{equation*}
\gamma(r)  =  -\frac{1}{2} \int_{-\infty}^ 0 (v(r)  - v(r-s)) e^{\bh s} \bh ds + e^ {-\bh r} \frac{\bh}{2} \int_{-\infty}^0 \int_{-\infty}^ r  e^{\bh u}(v(u)  - v(u-s)) e^{\bh s} \bh du ds.
\end{equation*}
\end{small}
By the change of variable $x = u-s,$ we get

\begin{equation*}
\begin{split}
 \int_{-\infty}^0 \int_{-\infty}^ r  e^{\bh u}v(u-s) e^{\bh s} \bh du ds &=  \int_{-\infty}^ 0 \int_{-\infty}^ {r-s} e^{\bh(x+s)} v(x) e^{\bh s} \bh dx ds\\
&=  \int_{-\infty} ^\infty \int_{-\infty}^{\text{min}\{0, r-x\}} e^{\bh x} e^{\bh s} v(x) e^{\bh s} \bh ds dx\\
&=  \int_{-\infty}^ r \int_{-\infty}^ 0 e^{\bh x} e^ {\bh s} v(x) e^{\bh s} \bh ds dx \\
&+ \int_{r}^\infty \int_{-\infty}^ {r-x} e^{\bh x} e^ {\bh s} v(x) e^{\bh s} \bh ds dx. 
\end{split}
\end{equation*}
Finally, we have the identities 

\begin{align*}
\int_{-\infty}^ 0 v(r-s) e^{\bh s} \bh ds &= \int_r^ \infty v(x) e^{\bh(r-x)} \bh dx\\
\int_{-\infty}^ 0 \int_{-\infty}^ r e^{\bh u} v(u) e^{\bh s} \bh du ds &= \int_{-\infty}^r e^{\bh x} v(x) dx\\
\int_{-\infty}^ 0 v(r) e^{\bh s} \bh ds &= v(r).
\end{align*}
Combining all the results above gives us \eqref{covofU}.
\end{proof}
\end{lemma}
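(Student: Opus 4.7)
The plan is to start from the integration-by-parts rewrite
$U_t = G_t - e^{-\bh t}\bh \int_{-\infty}^t e^{\bh s} G_s\,ds$
of the stationary solution \eqref{stationarysolution} (the same identity used in the proof of Lemma \ref{lemma:first}). Because $G_0 = 0$, this gives $U_0 = -\bh \int_{-\infty}^0 e^{\bh s} G_s\,ds$, and since $\bh^\top = \bh$ we may also write $U_0^\top = -\int_{-\infty}^0 G_s^\top e^{\bh s}\bh\,ds$. Multiplying out $U_r U_0^\top$ produces a single integral term and a double integral term in $u$ and $s$, after which the whole computation reduces to taking the expectation of this product.

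For the expectation, I would invoke the elementary identity
$\e(G_u G_s^\top) = \tfrac{1}{2}\bigl(v(u) + v(s) - v(u-s)\bigr),$
valid under independence of the components of $G$ together with $G_0 = 0$ and stationarity of increments (each diagonal entry being the usual polarisation). Substituting yields six scalar pieces, organised by whether they carry $v(r), v(s), v(r-s)$ in the single integral and $v(u), v(s), v(u-s)$ in the double integral. The key simplification is that the two $v(s)$ contributions cancel. Indeed, $\int_{-\infty}^r e^{\bh u}\,du = \bh^{-1} e^{\bh r}$ commutes with the prefactor $e^{-\bh r}\bh$, so the $v(s)$ part of the double integral collapses to $\tfrac{1}{2}\int_{-\infty}^0 v(s) e^{\bh s}\bh\,ds$, which is exactly minus the $v(s)$ piece of the single integral. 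Of the terms that survive, $\int_{-\infty}^0 v(r) e^{\bh s}\bh\,ds = v(r)$ (using $\int_{-\infty}^0 e^{\bh s}\bh\,ds = I$), while the $v(u)$ piece reduces via Fubini to $\tfrac{e^{-\bh r}\bh}{2}\int_{-\infty}^r e^{\bh x} v(x)\,dx$.

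The main technical obstacle is the $v(u-s)$ piece of the double integral, where the two integration variables are coupled. Here I would change variables via $x = u-s$ and apply Fubini: the domain $\{s \le 0,\ u \le r\}$ becomes $\{x \in \re,\ s \le \min(0, r-x)\}$, and splitting the outer integral at $x = r$ produces exactly the two double integrals appearing in \eqref{covofU}. An analogous change of variables $x = r-s$ converts the $v(r-s)$ piece of the single integral into $\tfrac{1}{2}\int_r^\infty v(x) e^{\bh(r-x)}\bh\,dx$. Collecting all surviving pieces with their correct signs produces the claimed representation. The only subtlety throughout is the non-commutativity of $v(\cdot)$ with the matrix exponentials, but since $\bh$ commutes with every $e^{\bh\cdot}$ and with $\bh^{-1}$, the manipulations of the kernel factors $e^{\bh s}\bh$ go through exactly as in the scalar case, and the factor ordering in the final formula is preserved.
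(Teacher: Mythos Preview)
Your proposal is correct and follows essentially the same route as the paper's proof: the same integration-by-parts representation of $U_t$, the same two-term expansion of $U_rU_0^\top$, the same use of $\e(G_uG_s^\top)=\tfrac12(v(u)+v(s)-v(u-s))$, the same cancellation of the $v(s)$ pieces, and the same change of variable $x=u-s$ with a Fubini split at $x=r$ for the coupled term. The only addition is your explicit remark on non-commutativity, which the paper leaves implicit.
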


\begin{lemma}
\label{lemma:covofU2}
Let $G$ have independent components. Then
\begin{small}
\begin{equation}
\label{nonidentical}
\gamma(r) = \frac{\bh}{2} \int_{-\infty}^ 0 \int_{-\infty}^ 0 e^{\bh x} \left( v(x+r) - v(r) + v(r-s) - v(x+r-s)\right) e^{\bh s} \bh dx ds.
\end{equation}
\end{small}
\begin{proof}
The expression 
\begin{small}
\begin{equation*}
\gamma(r)  =  -\frac{1}{2} \int_{-\infty}^ 0 (v(r)  - v(r-s)) e^{\bh s} \bh ds + e^ {-\bh r} \frac{\bh}{2} \int_{-\infty}^0 \int_{-\infty}^ r  e^{\bh u}(v(u)  - v(u-s)) e^{\bh s} \bh du ds
\end{equation*}
\end{small}
can be written as
\begin{equation*}
\gamma(r) = -\frac{1}{2} \int_{-\infty}^0 e^{-\bh r} \bh \int_{-\infty}^ r e^{\bh u} ( v(r) - v(u) - v(r-s) + v(u-s)) e^{\bh s} \bh du  ds,
\end{equation*}
from which the claim follows by the change of variable $u-r = x$.
\end{proof}
\end{lemma}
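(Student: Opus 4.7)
The plan is to start from the intermediate expression for $\gamma(r)$ that already appears midway through the proof of Lemma \ref{lma:crosscov-rep-1}, namely
\begin{equation*}
\gamma(r)  =  -\tfrac{1}{2} \int_{-\infty}^{0} \bigl(v(r)  - v(r-s)\bigr) e^{\bh s} \bh\, ds + e^{-\bh r}\tfrac{\bh}{2} \int_{-\infty}^{0} \int_{-\infty}^{r}  e^{\bh u}\bigl(v(u)  - v(u-s)\bigr) e^{\bh s} \bh\, du\, ds.
\end{equation*}
This is the natural starting point because Lemma \ref{lemma:covofU2} is essentially just a repackaging of this formula. The second (double) integral is already in the form we want; the first (single) integral is the piece that needs to be rewritten.

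The key trick is to inflate the single integral into a double integral with the same $u$-range. Since $\bh > 0$, the exponents and $\bh$ itself pairwise commute, so
\begin{equation*}
e^{-\bh r}\bh \int_{-\infty}^{r} e^{\bh u}\, du = e^{-\bh r} e^{\bh r} = I.
\end{equation*}
Inserting this identity in front of $(v(r)-v(r-s))e^{\bh s}\bh$ converts the first term into a double integral over $(-\infty, 0) \times (-\infty, r]$ with integrand $e^{\bh u}(v(r)-v(r-s))e^{\bh s}\bh$. After this rewriting, both terms share the same outer factor $e^{-\bh r}\tfrac{\bh}{2}$ and the same domain, so they combine into
\begin{equation*}
\gamma(r) = -e^{-\bh r}\tfrac{\bh}{2}\int_{-\infty}^{0}\int_{-\infty}^{r} e^{\bh u}\bigl(v(r)-v(u)-v(r-s)+v(u-s)\bigr) e^{\bh s}\bh\, du\, ds.
\end{equation*}

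Finally, I would apply the substitution $x = u - r$ in the inner integral, which sends the domain $u \in (-\infty, r]$ to $x \in (-\infty, 0]$ and replaces $e^{\bh u}$ by $e^{\bh x}e^{\bh r}$. The factor $e^{\bh r}$ then cancels with the leading $e^{-\bh r}$, and the arguments $v(r),\,v(u),\,v(r-s),\,v(u-s)$ become $v(r),\,v(x+r),\,v(r-s),\,v(x+r-s)$, respectively. Absorbing the outer minus sign by swapping the signs of the four $v$-terms yields precisely the stated expression \eqref{nonidentical}.

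The proof is essentially bookkeeping; there is no analytic obstacle beyond tracking signs and matrix orderings. The only thing to be mildly careful about is the commutativity used to pull $e^{-\bh r}\bh\int_{-\infty}^r e^{\bh u}\,du$ through the integrand, but since $\bh$ and all its matrix exponentials commute with each other (and with the scalar function $v(\cdot)$-valued diagonal matrices are not needed here, only scalar prefactors), this is harmless; the matrices $v(\cdot)$ and $e^{\bh s}\bh$ retain their positions. Nothing else needs to be verified.
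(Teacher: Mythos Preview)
Your proposal is correct and follows essentially the same route as the paper's proof: start from the intermediate expression in the proof of Lemma~\ref{lma:crosscov-rep-1}, inflate the single integral to a double integral via $e^{-\bh r}\bh\int_{-\infty}^{r}e^{\bh u}\,du = I$, combine, and substitute $x=u-r$. The only difference is that you spell out the inflation identity explicitly, whereas the paper simply asserts that the expression ``can be written as'' the combined double integral.
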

\begin{rem}
If the matrices $v(t)$ and $\bh$ commute for every $t\in \mathbb{R}$, we obtain even simpler expression
\begin{equation*}
\gamma(r) = \frac{\bh}{4}\int_{-\infty}^0 e^ {\bh x} \left( v(x+r)  + v(r-x) -2v(r)\right) dx.
\end{equation*}
In particular, this is the case if $G$ consists of independent processes with equal variances. 
\end{rem}

\subsection{Proofs related to Gaussian examples}
\label{subsec:proof-5}
This section is devoted to the proofs of Proposition \ref{prop:Gaussian-consistency}, Theorem \ref{theorem:Gaussian-clt}, and Corollary \ref{kor:fbm-clt}. We begin with the proof of Proposition \ref{prop:Gaussian-consistency}. 
\begin{proof}[Proof of Proposition \ref{prop:Gaussian-consistency}]
We have
$$
\hat{\gamma}_{T,i,j}(s)-\gamma_{i,j}(s) = \frac{1}{T} \int_0^T \left( U^{(i)}_{r+s} U^{(j)}_r - \e \left(U^{(i)}_{s} U^{(j)}_0\right)\right)dr
$$
implying, with straightforward computations, that
$$
\e \left[\hat{\gamma}_{T,i,j}(s)-\gamma_{i,j}(s)\right]^2 \leq \frac{2}{T} \int_0^T |\gamma_{i,j}(r+s)|^2dr.
$$
Now $\Vert \gamma(r)\Vert \to 0$ implies $|\gamma_{i,j}(r)| \to 0$ as well, and thus, for each fixed $s$, 
$$
|\hat{\gamma}_{T,i,j}(s)-\gamma_{i,j}(s)| \to 0
$$
in $L^2$. These further implies 
$$
\sup_{s\in[0,t]}\e\Vert\hat{\gamma}_T(s)-\gamma(s)\Vert^2 \to 0,
$$ from which we conclude that conditions of Remark \ref{rem:sufficient} are satisfied. The claim then follows.
\end{proof}
In order to prove Theorem \ref{theorem:Gaussian-clt} one needs to prove the convergence of finite dimensional distributions and tightness. For the latter we present the following result that might be interesting on its own. In the sequel, we use the short notation
\begin{equation*}
F_T(\tau) = l(T) \vect  (\hat{\gamma}_T(\tau)- \gamma(\tau)).
\end{equation*}
\begin{prop}
\label{prop:tightness}
Suppose that $\gamma(r)$ is differentiable for almost all $r$ and 
$$
\max\left(\Vert\gamma'(r)\Vert,\Vert \gamma(r)\Vert\right) \leq h(r)
$$
for some non-increasing function $h(r)$ such that, for some $K>0$, we have $h(r) \in L^1([0,K])$ and
$$
\int_K^T h(r)^2 dr = \mathcal{O}\left(\frac{T}{l(T)^2}\right), \quad T>K.
$$
Then there exists $T_0$ such that for all $\tau,s\in[0,t]$, all $a\in \mathbb{R}^{n^2}$ and all $p\geq 2$, we have
\begin{equation*}
\e \left| a^{\top}(F_T(\tau) - F_T(s)) \right|^p \leq c|\tau -s|^{\frac{p}{2}},\quad T\geq T_0,
\end{equation*}
where $c$ depends only on $p$, $t$, $\gamma$, and $a^{\top}$.
\end{prop}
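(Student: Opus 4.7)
The plan is to reduce the claim to the $p = 2$ bound via Gaussian hypercontractivity and then to establish the $L^2$ estimate by an Isserlis-based double-integral expansion, splitting the resulting one-dimensional integral into regions where the second-order difference of $\gamma$ is controlled either by uniform boundedness of $\gamma$ or by its decay.

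Observe first that, since $G$ is Gaussian and $U$ is a pathwise linear functional of $G$, every component of $U$ is Gaussian. The centred variable
\begin{equation*}
a^\top F_T(\tau) = \frac{l(T)}{T}\int_0^T \sum_{i,j} a_{ij}\bigl( U^{(i)}_{r+\tau} U^{(j)}_r - \gamma_{i,j}(\tau)\bigr) dr
\end{equation*}
is therefore an element of the second Wiener chaos of $G$. Nelson-type hypercontractivity on a fixed chaos yields
\begin{equation*}
\e\bigl|a^\top(F_T(\tau) - F_T(s))\bigr|^p \leq (p-1)^p \left(\e\bigl|a^\top(F_T(\tau) - F_T(s))\bigr|^2\right)^{p/2},
\end{equation*}
so it suffices to prove $\e|a^\top(F_T(\tau) - F_T(s))|^2 \leq c|\tau - s|$ with a constant $c$ independent of $T$ for $T$ past some threshold $T_0$.

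Set $\delta = \tau - s$ (assume $\delta > 0$) and $Z_r = \sum_{i,j}a_{ij}\bigl(U^{(i)}_{r+\tau} - U^{(i)}_{r+s}\bigr) U^{(j)}_r$. Expanding the square yields
\begin{equation*}
\e\bigl|a^\top(F_T(\tau)-F_T(s))\bigr|^2 = \frac{l(T)^2}{T^2}\int_0^T\!\!\int_0^T \mathrm{Cov}(Z_r,Z_{r'})\,dr\,dr'.
\end{equation*}
By Isserlis' formula, $\mathrm{Cov}(Z_r,Z_{r'})$ splits into two Wick contractions, each a product of two $\gamma$-entries. The key factor is the second-order difference
\begin{equation*}
\Delta_{ik}(u,\delta) \coloneqq 2\gamma_{i,k}(u) - \gamma_{i,k}(u+\delta) - \gamma_{i,k}(u-\delta), \qquad u = r - r',
\end{equation*}
multiplied by a factor $\gamma_{j,l}(u)$ (the other contraction produces the analogous object with arguments shifted by amounts bounded by $t$, which is harmless). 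After the substitution $u = r - r'$ and integration against the triangular density $(1-|u|/T)_+$, the task reduces to bounding
\begin{equation*}
\frac{l(T)^2}{T}\int_{-T}^{T}|\Delta_{ik}(u,\delta)|\, h(|u|)\,du.
\end{equation*}

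This integral is split at $|u| = \delta$. On $\{|u|\leq\delta\}$ the trivial bound $|\Delta_{ik}(u,\delta)|\leq 4M$, with $M = \sup_r\Vert\gamma(r)\Vert$ finite by Cauchy-Schwarz and the vanishing of $\gamma$ at infinity, together with the inequality $h(|u|)^2\leq M\, h(|u|)$ (so that $h\in L^1([0,K])$ is enough to absorb any singularity at $0$), contributes $O(\delta)$. On $\{|u|>\delta\}$ the mean value theorem applied to a primitive of $\gamma'_{i,k}$ together with the monotonicity of $h$ gives $|\Delta_{ik}(u,\delta)|\leq 2\delta\, h(|u|-\delta)$; multiplying by $h(|u|)$ and splitting $[\delta,T]$ at $K$ gives
\begin{equation*}
\int_{\delta}^T h(u-\delta)h(u)\,du \leq M\int_0^K h(v)\,dv + \int_K^T h(v)^2\,dv \leq C_1 + C_2\frac{T}{l(T)^2}.
\end{equation*}
Assembling and multiplying by the prefactor $l(T)^2/T$ yields $\e|a^\top(F_T(\tau)-F_T(s))|^2\leq c\delta$ for all $T\geq T_0$; hypercontractivity then delivers the $p$-th moment bound.

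The main obstacle is precisely this last splitting: the hypotheses provide only $L^1$-control of $h$ near the origin and only $L^2$-control in the tail, so neither bound on its own dominates $\int_0^T h^2$. One is forced to use the uniform boundedness of $\gamma$ (from stationarity and square-integrability, not from $h$) on the singular piece and reserve the assumed $L^2$-tail decay only for the tail. The threshold $T_0$ must be chosen large enough that the tail contribution $C_2 T/l(T)^2$ absorbs the $O(1)$ contribution from $[0,K]$, whereupon the prefactor $l(T)^2/T$ collapses the whole estimate to order $\delta$.
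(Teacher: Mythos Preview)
Your strategy matches the paper's: reduce to $p=2$ by hypercontractivity in the second Wiener chaos, expand the covariance via Isserlis, extract $|\tau-s|$ through the mean value theorem, and split the resulting one-dimensional integral at a fixed threshold $K$, using $h\in L^1$ near the origin and the assumed $L^2$-tail decay beyond it. The paper writes the eight Isserlis terms out as first-order differences of $\gamma$; your packaging via the second-order difference $\Delta_{ik}$ is an equivalent reorganisation, and your explicit splitting at $|u|=\delta$ is in fact a point the paper glosses over.

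There is, however, one concrete slip. Having reduced the first contraction to $\frac{l(T)^2}{T}\int|\Delta_{ik}(u,\delta)|\,h(|u|)\,du$, you invoke ``$h(|u|)^2\le M\,h(|u|)$'' on $\{|u|\le\delta\}$ and, implicitly, $h(u)\le M$ on $[\delta,K]$ to obtain $\int_\delta^K h(u-\delta)h(u)\,du\le M\int_0^K h$. But $h$ dominates $\Vert\gamma'\Vert$ as well as $\Vert\gamma\Vert$, and $\Vert\gamma'\Vert$ may blow up at the origin (for fBm components with $H_i<\tfrac12$ one has $h(r)\sim r^{2H_i-1}$), so $h\le M$ is not available. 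The remedy, which is exactly what the paper does, is not to replace the intact covariance factor $|\gamma_{j,l}(u)|$ by $h(|u|)$ on the compact region: keep $|\gamma_{j,l}(u)|\le M=\sup_r\Vert\gamma(r)\Vert$ there, and apply the mean value theorem only to the \emph{difference} factor. Then $\{|u|\le\delta\}$ contributes at most $8M^2\delta$, the region $[\delta,K]$ contributes $2\delta M\int_0^{K}h<\infty$, and only on $[K,T]$ do you bound both factors by $h$ to get $\int_K^T h^2=\mathcal{O}(T/l(T)^2)$. With this correction, together with the implicit bound $l(T)^2/T=\mathcal{O}(1)$ (the paper states explicitly that $\sqrt{T}$ is the best possible rate), your argument goes through.
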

\begin{proof}
We have
\begin{equation*}
F_T(\tau) - F_T(s) = \frac{l(T)}{T} \int_0^T \vect (U_{u+\tau} U_u^\top - \gamma(\tau) - U_{u+s} U_u^\top + \gamma(s)) du.
\end{equation*}
First we note that it suffices to prove the claim only for $p=2$. Indeed, since $U$ is Gaussian, the expression $a^\top (F_T(\tau) - F_T(s))$ belongs to the so-called second Wiener chaos (for details, see e.g. \cite{Janson-1997}), implying the hypercontractivity property 
$$
\e|a^\top (F_T(\tau) - F_T(s))|^p \leq c_p \left[\e|a^\top (F_T(\tau) - F_T(s))|^2 \right]^{\frac{p}{2}}.
$$ 
Thus, let $p=2$. We have
$$
|a^\top (F_T(\tau) - F_T(s))|^2 \leq c_a \Vert F_T(\tau) - F_T(s)\Vert^2,
$$
where 
\begin{equation*}
\Vert F_T(\tau) - F_T(s)\Vert^2 = \frac{l(T)^2}{T^2} \sum_{i,j = 1}^n \left( \int_0^T \left( U_{u+\tau} U_u^\top - \gamma(\tau) - U_{u+s} U_u^\top + \gamma(s)\right)_{i,j} du \right)^2.
\end{equation*}
Here
\begin{equation*}
\begin{split}
&\left( \int_0^T (U_{u+\tau} U_u^\top - \gamma(\tau)-U_{u+s}U_u^ \top + \gamma(s))_{i,j} du \right)^ 2\\
&= \int_0^T U_{u+\tau}^{(i)} U_u^{(j)} - \gamma_{i,j}(\tau) - U_{u+s}^{(i)} U_u^{(j)} + \gamma_{i,j}(s) du\\ &\times\int_0^T U_{v+\tau}^{(i)} U_v^{(j)} - \gamma_{i,j}(\tau) - U_{v+s}^{(i)} U_v^{(j)} + \gamma_{i,j}(s) dv.
\end{split}
\end{equation*}
Taking expectation and with some straightforward computations, we get
\begin{equation*}
\begin{split}
&\e\left( \int_0^T (U_{u+\tau} U_u^\top - \gamma(\tau)-U_{u+s}U_u^ \top + \gamma(s))_{i,j} du \right)^ 2\\ 
&= \int_0^ T (T-x) \gamma_{j,j,}(x) ( \gamma_{i,i}(x) - \gamma_{i,i} (x+\tau-s)) dx \\
& + \int_0^ T (T-x) \gamma_{j,j}(x)( \gamma_{i,i}(x) - \gamma_{i,i}(-x+\tau-s)) dx\\
& + \int_0^ T (T-x) \gamma_{j,j,}(x) ( \gamma_{i,i}(x) - \gamma_{i,i} (x+s-\tau)) dx \\
& + \int_0^ T (T-x) \gamma_{j,j}(x)( \gamma_{i,i}(x) - \gamma_{i,i}(-x+s-\tau)) dx\\
& + \int_0^ T (T-x) \gamma_{i,j}(x+\tau) ( \gamma_{i,j}(-x+\tau) - \gamma_{i,j} (-x+s)) dx \\
& + \int_0^ T (T-x) \gamma_{i,j}(-x+\tau)( \gamma_{i,j}(x+\tau) - \gamma_{i,j}(x+s)) dx\\
& + \int_0^ T (T-x) \gamma_{i,j}(x+s) ( \gamma_{i,j}(-x+s) - \gamma_{i,j} (-x+\tau)) dx \\
& + \int_0^ T (T-x) \gamma_{i,j}(-x+s)( \gamma_{i,j}(x+s) - \gamma_{i,j}(x+\tau)) dx.
\end{split}
\end{equation*}
Thus it suffices to show that all eight terms, when multiplied with $\frac{l(T)^ 2}{T^ 2}$, admit a bound of the form $C|\tau - s|$. We show how the first term can be treated, while the rest can be shown with similar arguments. Without loss of generality, let $s<\tau$ and denote $u=\tau-s$. For the first term above, we apply the mean value theorem to obtain 
\begin{equation*}
\begin{split}
&\left|\int_0^ T (T-x) \gamma_{j,j,}(x) ( \gamma_{i,i}(x) - \gamma_{i,i} (x+u)) dx\right|  \\
& \leq T u \left(\int_0^ K |\gamma_{j,j}(x)| \sup_{x < \xi < x+u} |\gamma'_{i,i} (\xi)| dx  + \int_K^ T |\gamma_{j,j}(x)| \sup_{x < \xi < x+u} |\gamma'_{i,i} (\xi)| dx \right).
\end{split}
\end{equation*}
By assumption, 
$$
\sup_{x < \xi < x+u} |\gamma'_{i,i} (\xi)| \leq \sup_{x < \xi < x+u}\Vert \gamma'(\xi)\Vert \leq h(x).
$$
Since $|\gamma_{j,j}(x)|$ is bounded and $h(x) \in L^1([0,K])$, we get
$$
\frac{l(T)^2}{T^2}\cdot T u \int_0^ K |\gamma_{j,j}(x)| \sup_{x < \xi < x+u} |\gamma'_{i,i} (\xi)| dx \leq cu \frac{l(T)^2}{T}.
$$
Now, the best possible rate $l(T)$ that one can have is $\sqrt{T}$, giving $\sup_{T\geq T_0}\frac{l(T)^2}{T} < \infty$. Similarly, we have
$$
\int_K^ T |\gamma_{j,j}(x)| \sup_{x < \xi < x+u} |\gamma'_{i,i} (\xi)| dx \leq c\int_K^T h(x)^2dx \leq c \frac{T}{l(T)^2}
$$
by assumption. Treating the rest of the terms similarly concludes the proof.
\end{proof}
The proof of Theorem \ref{theorem:Gaussian-clt} is now rather straightforward.
\begin{proof}[Proof of Theorem \ref{theorem:Gaussian-clt}]
By Cramer-Wold device it suffices to prove the convergence of linear combinations, and then the tightness follows from Proposition \ref{prop:tightness}. In order to obtain convergence of multidimensional distributions, we have to prove that all the combinations of the form
$$
\sqrt{T} \sum_{k=1}^d a_k^\top \vect  (\hat{\gamma}_T(\tau_k)- \gamma(\tau_k))
$$
converges towards $\sum_{k=1}^d a_k^\top X_{\tau_k}$, where $a_k$ are some $n^2$-dimensional vectors. Now the above expression can be written as 
$$
\frac{1}{\sqrt{T}} \int_0^T \sum_{k=1}^d \sum_{i,j=1}^n a_k^{(i,j)} \left[U^{(i)}_{r+\tau_k}U^{(j)}_r-\gamma_{i,j}(\tau_k)\right]dr,
$$
and thus the convergence towards a Gaussian limit follows from the continuous time Breuer-Major Theorem (see, e.g. \cite{cont-BM} and references therein) together with the fact that now $\int_0^\infty \Vert \gamma(s)\Vert^2 ds < \infty$. This completes the proof.
\end{proof}
Finally, we verify the result for multidimensional fractional Ornstein-Uhlenbeck process.
\begin{proof}[Proof of Corollary \ref{kor:fbm-clt}]
Let $H_{max} = \max_{1\leq i\leq n}H_i$ and $H_{min} = \min_{1\leq i\leq n}H_i$. We prove that asymptotically, as $t\to \infty$, we have 
\begin{equation}
\label{eq:fbm-gamma}
\Vert \gamma(t)\Vert = \mathcal{O}\left(t^{2H_{max}-2}\right),
\end{equation}
\begin{equation}
\label{eq:fbm-gamma-derivative}
\Vert \gamma'(t)\Vert = \mathcal{O}\left(t^{2H_{max}-2}\right),
\end{equation}
and that $\Vert \gamma'(t)\Vert =\mathcal{O}\left(\max\left(t^{2H_{min}-1},1\right)\right)$ for $t\leq K$ with $K$ fixed. 
Since $H_{max} < \frac34$ and $H_{min}>0$, the statement then follows from Theorem \ref{theorem:Gaussian-clt}. For this, let $g$ be an auxiliary function such that $\frac{g(r)}{r} \to 0$ and
$\frac{\ln r}{g(r)}\to 0$. Then it follows that $r^{4-2H_{max}}e^{-cg(r)} \to 0$ for all $c>0$. We begin by showing \eqref{eq:fbm-gamma}. 
We divide the expression \eqref{nonidentical} for $\gamma(r)$ into 
\begin{small}
\begin{align}
&\frac{\bh}{2} \int_{-\infty}^ {-g(r)} \int_{-\infty}^ {-g(r)} e^{\bh x} \left( v(x+r) - v(r) + v(r-s) - v(x+r-s)\right) e^{\bh s} \bh dx ds  \label{partone}\\
&\frac{\bh}{2} \int_{-g(r)}^ {0} \int_{-g(r)}^ {0} e^{\bh x} \left( v(x+r) - v(r) + v(r-s) - v(x+r-s)\right) e^{\bh s} \bh dx ds. \label{parttwo}
\end{align}
\end{small}
Since $r$ is large, we obtain for \eqref{parttwo} that
\begin{equation*}
\begin{split}
&v_{i,i}(x+r)  - v_{i,i}(r) + v_{i,i}(r-s) - v_{i,i}(x+r-s)\\
 =& |x+r|^ {2\h_i} - r^{2\h_i} + |r-s|^ {2\h_i} - |x+r-s|^ {2\h_i}\\
 =& r^ {2\h_i}((1+\frac{x}{r})^{2\h_i} + (1-\frac{s}{r})^ {2\h_i} -1 - (1+ \frac{x-s}{r})^ {2\h_i})\\
 =& r^ {2\h_i}\left( \mathcal{O} \left(\left(\frac{x}{r}\right)^ 2\right) + \mathcal{O}\left(\left(\frac{s}{r}\right)^ 2\right)\right), \quad\text{when } \h_i \neq \frac{1}{2}
\end{split}
\end{equation*}
and
\begin{equation*}
v_{i,i}(x+r)  - v_{i,i}(r) + v_{i,i}(r-s) - v_{i,i}(x+r-s) = 0, \quad\text{when } \h_i = \frac{1}{2}. 
\end{equation*}
Hence
\begin{multline*}
\frac{\bh}{2} \int_{-g(r)}^ {0} \int_{-g(r)}^ {0} e^{\bh x} \left( v(x+r) - v(r) + v(r-s) - v(x+r-s)\right) e^{\bh s} \bh dx ds\\
=  \frac{\bh}{2} r^{2\h_{max}}\int_{-g(r)}^ {0} \int_{-g(r)}^ {0} e^{\bh x} \left( \mathcal{O} \left(\left(\frac{x}{r}\right)^ 2\right) + \mathcal{O}\left(\left(\frac{s}{r}\right)^ 2\right)\right)\\
 \mathrm{diag}\left(\mathbbm{1}_{\h_i \neq \frac{1}{2}} r^{2(\h_i-\h_{max})}\right) e^{\bh s} \bh dx ds,
\end{multline*}
and taking the norm gives
\begin{small}
\begin{equation*}
\begin{split}
&\left|\left|\frac{\bh}{2} \int_{-g(r)}^ {0} \int_{-g(r)}^ {0} e^{\bh x} \left( v(x+r) - v(r) + v(r-s) - v(x+r-s)\right) e^{\bh s} \bh dx ds\right|\right|\\
&\leq Cr^ {2 \h_{max} -2} \int_{-g(r)}^ {0} \int_{-g(r)}^ {0} e^{\lambda_{min}(x+s)} (x^2+s^ 2) dxds = \mathcal{O}(r^ {2\h_{max}-2}).
\end{split}
\end{equation*}
\end{small}
It remains to study \eqref{partone}. We have four terms for which one satisfies 
\begin{small}
\begin{equation}
\label{insig3}
\begin{split}
\left|\left| \int_{-\infty}^ {-g(r)} \int_{-\infty}^ {-g(r)} e^{\bh x} v(r-s) e^{\bh s} \bh dxds \right|\right|  & \leq C e^{-\lambda_{min} g(r)} \int_{-\infty}^ {-g(r)} e^{\lambda_{min} s} (r-s)^ 2 ds\\
&= \mathcal{O}\left(e^{-\lambda_{min} g(r)}r^ 2\right)\\
&= \mathcal{O}\left(r^{2\h_{max}-2}\right)
\end{split}
\end{equation}
\end{small} by our choice of $g(r)$. Treating the remaining three terms similarly proves \eqref{eq:fbm-gamma}. Let us next consider the derivative $\gamma'(r)$. We first observe that, by representation \eqref{covofU}, the matrix $\gamma(r)$ is continuously differentiable except at the origin, and we have
$$
\Vert \gamma'(r)\Vert =\mathcal{O}\left(\max\left(t^{2H_{min}-1},1\right)\right)
$$
as $r\to 0$. To prove \eqref{eq:fbm-gamma-derivative}, by a standard application of dominated convergence theorem, we express $\gamma'(r)$ as the sum of terms
\begin{small}
\begin{align}
&\frac{\bh}{2} \int_{-\infty}^ {-g(r)} \int_{-\infty}^ {-g(r)} e^{\bh x} \left( v'(x+r) - v'(r) + v'(r-s) - v'(x+r-s)\right) e^{\bh s} \bh dx ds  \label{dpartone}\\
&\frac{\bh}{2} \int_{-g(r)}^ {0} \int_{-g(r)}^ {0} e^{\bh x} \left( v'(x+r) - v'(r) + v'(r-s) - v'(x+r-s)\right) e^{\bh s} \bh dx ds. \label{dparttwo}
\end{align}
\end{small}
Now both terms can be treated as \eqref{partone} and \eqref{parttwo}, which concludes the proof.
\end{proof}
\subsection*{Acknowledgements}
Pauliina Ilmonen and Lauri Viitasaari thank Vilho, Yrj\"o ja Kalle V\"ais\"al\"a foundation for financial support. Marko Voutilainen thanks Magnus Ehrnrooth foundation for financial support. Pauliina Ilmonen, Lauri Viitasaari, and Marko Voutilainen thank University of Valparaiso for hospitality.

\bibliographystyle{plain}
\bibliography{pipliateekki}
\end{document}